\newcommand{\white}{\node[draw,circle, minimum width=0.5cm]}
\newcommand{\black}{\node[draw,circle, minimum width=0.5cm,fill=black]}
\newcommand{\PP}{\mathbb{P}}
\newcommand{\ZZ}{\mathbb{Z}}
\newcommand{\RR}{\mathbb{R}}
\newcommand{\CC}{\mathbb{C}}
\renewcommand{\P}{\mathcal{P}}
\newcommand{\B}{\mathcal{B}}
\newcommand{\M}{\mathcal{M}}
\newcommand{\ang}[1]{\langle #1 \rangle}
\newcommand{\val}{\mathrm{val}}
\newcommand{\curve}{\node[draw,circle, fill=white]}
\newcommand{\bino}[2]{\begin{pmatrix}
#1 \\
#2 \\
\end{pmatrix}}
\newtheorem{theo}{Theorem}[section]
\newtheorem{prop}[theo]{Proposition}
\newtheorem{coro}[theo]{Corollary}
\newtheorem{lem}[theo]{Lemma}
\theoremstyle{definition}
\newtheorem{defi}[theo]{Definition}
\theoremstyle{remark}
\newtheorem{remark}[theo]{Remark}
\newenvironment{rem}[1]{
    \begin{remark}#1}{
    \xqed{\blacklozenge}\end{remark}
}
\theoremstyle{remark}
\newtheorem{example}[theo]{Example}
\newenvironment{expl}[1]{
    \begin{example}#1}{
    \xqed{\lozenge}\end{example}
}
\newcommand{\xqed}[1]{
    \leavevmode\unskip\penalty9999 \hbox{}\nobreak\hfill
    \quad\hbox{\ensuremath{#1}}}
\keywords{Enumerative geometry, tropical descendant invariants\\ 
}
\begin{document}
 
 
\title{Tropical descendant invariants with line constraints}
\author{Thomas Blomme, Hannah Markwig}

\begin{abstract}
Via correspondence theorems, rational log Gromov--Witten invariants of the plane can be computed in terms of tropical geometry. For many cases, there exists a range of algorithms to compute tropically: for instance, there are (generalized) lattice path counts and floor diagram techniques. So far, the cases for which there exist algorithms do not extend to non-stationary rational descendant log Gromov--Witten invariants, i.e.\ those where Psi-conditions do not have to be matched up with the evaluation of a point. The case of rational descendant log Gromov--Witten invariants satisfying point conditions (without Psi-conditions) and one Psi-condition of any power combined with a line plays a particularly important role, since it shows up in mirror symmetry as contributions to coefficients of the $J$-function. We provide recursive formulas to compute those numbers via tropical methods. Our method is inspired by the tropical proof of the WDVV equations. We also extend our study to counts involving two lines, both paired up with a Psi-condition, appearing with power one. 
\end{abstract}

\maketitle

\tableofcontents

\section{Introduction}

Tropical geometry provides powerful methods for the study of enumerative geometry and Gromov--Witten theory. Roughly, the general approach requires two steps: given an enumerative problem, we first need a \emph{correspondence theorem} stating the equality of the complex count with a certain tropical count, where as usual, tropical objects have to be counted with multiplicity reflecting the number of complex objects degenerating to a fixed tropical object under tropicalization. Once such a correspondence theorem exists, the second problem is to provide algorithms within tropical geometry that count the tropical objects appearing in the correspondence theorem.
These two steps already appeared in the pioneering work of Mikhalkin \cite{mikhalkin2005enumerative}, where he proved a correspondence theorem for counts of plane curves satisfying point conditions, and the lattice path algorithm to compute such numbers on the tropical side.

Since then, many instances of correspondence theorems have been proved and many tropical algorithms to compute tropical enumerative numbers have been developed, see e.g.\ \cite{NS06, IKS05, BBM10, CJM10, Blo19, FM09, Gro16, BruMa, BBM11, BBBM13, Bou19, ABR11, Tyo09}.

For the plane, counts of curves satisfying point conditions are equal to Gromov--Witten invariants. Many researchers have worked on correspondences involving Gromov--Witten invariants. One reason for that is the appearance of Gromov--Witten invariants in mirror symmetry. Their tropical analogues accordingly play a role in the Gross--Siebert mirror symmetry program involving tropical methods \cite{GS06, GS07, Gro09}. Throughout the years, it became clear that tropical enumerative invariants most naturally reflect log Gromov--Witten invariants. Log Gromov--Witten theory is a recent and active field of research \cite{AC11, Che10, Che14, GS13, Ran19}.

In this paper, we focus on the case of rational descendant log Gromov--Witten invariants of the plane.
For rational curves, the correspondences with the tropical world are most powerful, as they can be carried out on the level of moduli spaces \cite{Ran15}. 

There is a moduli stack with logarithmic structure
\[
\overline{M}^{\mathsf{log}}_{0,n}(\PP^2,d),
\]
parametrizing log stable maps of degree $d$ to $\PP^2$ whose source is a rational curve with $n$ marked points.

This moduli space is a virtually smooth Deligne--Mumford stack and it carries a virtual fundamental class denoted by $[1]^\mathsf{log}$ in degree $3d-1 +n$. For each of the  $n$ marked points, there are evaluation morphisms
\[
\mathrm{ev}_i: \overline{M}^{\mathsf{log}}_{0,n}(\PP^2,d)\to \PP^2.
\]

For each of the $n$ marks there is a cotangent line bundle, whose first Chern class is denoted $\psi_i$.

\begin{defi}\label{def-logdescGWI}
The \emph{rational descendant log Gromov--Witten invariant} is defined as the following intersection number on $\overline{M}^{\mathsf{log}}_{0,n}(\PP^2, d)$:
\begin{equation}\label{eq-lgw}
\langle  \tau_{k_1}(\Xi_1)\ldots\tau_{k_n}(\Xi_n)\rangle^\mathsf{log}_{0,d}=
\int_{[1]^\mathsf{log}} \prod_{j=1}^n \psi_j^{k_j}\mathrm{ev}_j^\ast([H_j]),
\end{equation}
where $\Xi_j$ stands for either the class of a line $L$ or the class of a point $P$.
\end{defi}

Analogously, one can define tropical rational log Gromov--Witten invariants as an intersection product on a tropical moduli space parametrizing tropical stable maps, see Section \ref{subsec-tropicaldesc}.
We denote the \emph{tropical rational descendant log Gromov--Witten}  by
$$\ang{\psi^{k_1}\Xi_1,\cdots,\psi^{k_n}\Xi_n}_d,$$
where $\Xi_j$ is either a tropical line $L$ or a point $P$.

\begin{theo}\emph{(Correspondence Theorem, \cite{Gro18, MR16}.)}
The rational descendant log Gromov--Witten invariant equals its tropical counterpart, \textit{i.e.}\ we have
$$\langle  \tau_{k_1}(\Xi_1)\ldots\tau_{k_n}(\Xi_n)\rangle^\mathsf{log}_{0,d}=\ang{\psi^{k_1}\Xi_1,\cdots,\psi^{k_n}\Xi_n}_d.$$
\end{theo}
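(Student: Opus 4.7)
The plan is to establish the correspondence at the level of moduli spaces, following the strategy of \cite{Ran15, MR16}. The key input is that the tropical moduli space parametrizing degree $d$ rational tropical stable maps with $n$ marked ends is the tropicalization (as a generalized cone complex) of $\overline{M}^{\mathsf{log}}_{0,n}(\PP^2,d)$, and that this tropicalization is compatible with both the evaluation maps $\mathrm{ev}_i$ and the universal cotangent lines $\psi_i$. Consequently, the tropical intersection product appearing on the right-hand side of the claimed equality should arise as the degeneration of the algebraic intersection appearing in \eqref{eq-lgw}.

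Concretely, I would fix a toric degeneration of $\PP^2$ whose dual intersection complex realizes the tropical plane, subdivided so that the point and line classes $\Xi_1,\dots,\Xi_n$ specialize to well-defined strata of the special fiber. The log degeneration formula then rewrites \eqref{eq-lgw} as a finite sum over combinatorial types of rigid tropical maps matching the prescribed incidences, each summand being a product of local vertex contributions and edge-gluing factors. For trivalent vertices carrying no $\psi$-insertion, the vertex contribution reduces to Mikhalkin's classical multiplicity, while a vertex absorbing an insertion $\psi^{k}\Xi_j$ contributes a local descendant log Gromov--Witten integral over a toric surface with prescribed contact orders along its boundary.

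The remaining step is to identify these local integrals with the combinatorial weights used to define $\ang{\psi^{k_1}\Xi_1,\cdots,\psi^{k_n}\Xi_n}_d$ in Section \ref{subsec-tropicaldesc}. The main obstacle lies in the non-stationary case $\Xi_j = L$: higher-valent vertices of the tropical source necessarily appear, the corresponding local moduli space is positive-dimensional, and the algebraic integral $\int \psi^{k}\,\mathrm{ev}^{\ast}[L]$ against the virtual class has to be computed explicitly and matched with the tropical multiplicity assigned to the vertex. This vertex-by-vertex matching is the technical heart of the argument and is precisely what is carried out in \cite{MR16} and extended in \cite{Gro18}, so the theorem follows by combining their local computation with the log degeneration formula described above.
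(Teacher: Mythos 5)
The paper offers no proof of this statement: it is imported verbatim from \cite{Gro18, MR16}, so there is no internal argument to compare yours against. Your proposal has essentially the same character --- its last paragraph concedes that the ``technical heart'' is ``precisely what is carried out in \cite{MR16} and extended in \cite{Gro18}'' --- so it is an outline of how one might organize a proof, not a proof. That is defensible for a quoted theorem, but two things in the outline should be corrected.

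First, you conflate two distinct strategies. The proofs in \cite{Ran15, Gro18} work by tropicalizing the moduli space $\overline{M}^{\mathsf{log}}_{0,n}(\PP^2,d)$ itself and showing that tropicalization is compatible with the evaluation maps, the $\psi$-classes and the intersection products, so that the algebraic intersection number in \eqref{eq-lgw} equals the tropical one directly. They do \emph{not} pass through a toric degeneration of the target, the log degeneration formula, and a vertex-by-vertex identification of local descendant integrals with tropical vertex multiplicities; that is the Nishinou--Siebert/Mikhalkin style of argument, and for non-stationary descendant insertions such a local computation is not available in the literature --- your second and third paragraphs therefore describe a route that nobody has carried out, and then attribute it to references that argue differently. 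Second, you omit the subtlety the paper flags immediately after the theorem: the correspondence in \cite{Gro18} is stated for $\psi$-classes pulled back from the moduli space of stable curves (ancestors), and identifying these with the cotangent-line classes of Definition \ref{def-logdescGWI} requires Proposition 3.4 of \cite{MR16}. Any account of why the theorem holds in the form stated here must include that ancestor--descendant comparison.
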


Note that the correspondence theorem of \cite{Gro18} is stated in terms of $\psi$-classes which are obtained as pullbacks from the moduli space of stable curves under the morphism which forgets the map. The corresponding invariants are sometimes also called ancestors instead of descendants. Proposition
3.4 of \cite{MR16} shows that for log Gromov--Witten invariants of toric varieties, this is equivalent to the usual version of $\psi$-classes.

For the case of tropical stationary rational descendant log Gromov--Witten invariants, \textit{i.e.}\ when $\psi$-conditions are only paired up with points ($\Xi_i=P$ if $k_i>0$) there exists a recursion similar to the WDVV equations which  computes  those numbers \cite{MR08}. Furthermore, there exists a generalized lattice path algorithm to compute those numbers \cite{MR08}. Also floor diagram techniques have been successfully applied for the computation of tropical stationary rational descendant log Gromov--Witten invariants \cite{BGM10}.

For the case where $\psi$-conditions are allowed to be paired up with line conditions, no tropical algorithm to compute these numbers is known. To the best of our knowledge, neither does there exists an algorithm to compute the corresponding rational descendant log Gromov--Witten invariants without relying on the correspondence theorem and tropical methods.

We close this gap by providing recursive formulas for two cases.

To simplify the notation further, we drop insertions of the form $ \tau_{0}(P)$ resp.\ $\psi^{0}P$, i.e.\ simple point conditions. With this simplified notation, the two cases we consider can be stated as follows.

We provide recursions that suffice to compute tropical rational log Gromov--Witten invariants of the form
\begin{enumerate}
\item $\langle \psi^{k}L\rangle_d$, where $k$ can be chosen arbitrarily. That is, we consider counts of rational tropical curves that satisfy point conditions and that satisfy a $\psi$-condition of arbitrary power together with a tropical line condition. This means we count rational tropical curves passing through points and having a vertex of valency $k+2$ (where the marked point itself is not counted towards the valency) on a tropical line. 
These results can be found in Section \ref{sec-WDVV1}.
\item $\langle\psi L, \psi L\rangle_d$. That is, we consider counts of rational tropical curves satisfying point conditions and two simple $\psi$-conditions with a line each. Put differently, we count rational tropical curves passing through points and having two trivalent marked vertices (where the marked point itself is not counted towards the valency) each on a tropical line. These results can be found in Section \ref{sec-WDVV2}.
\end{enumerate}

Our methods from Section \ref{sec-WDVV1} can also be applied for tropical rational descendant log Gromov--Witten invariants of the form $\ang{\psi^{k_1}P,\cdots,\psi^{k_{n-1}}P,\psi^{k_n}L}_d$, where each $k_i\geq 0$. In order to avoid heavy notation in the recursive formula, we choose to restrict the presentation to the case where $k_i=0$ for $i=1,\ldots,n-1$.

Another motivation to focus on this case comes from its importance for mirror symmetry of $\PP^2$, investigated by Gross in \cite{Gro09}. Our tropical invariants of the form  $\langle \psi^{k}L\rangle_d$ appear in Definition 3.4 (2)(a) in \cite{Gro09}. 
Later, Gross' result was generalized by Overholser \cite{Ove15} to include also more generalized rational descendant Gromov--Witten invariants. Our invariants appear as contributions to coefficients of the $J$-function which is based on ordinary Gromov-Witten invariants rather than their log versions. The relation between ordinary and log Gromov-Witten invariants in this setting is discussed in \cite{Gro09, Ove15, MR16}. Combinatorially, the tropical curves we count also show up in Definition 3.4 (3)(c) in \cite{Gro09} in another contribution to the $J$-function, however with slightly different multiplicity, which arises due to the relation between ordinary and log Gromov-Witten invariants.

Our approach to establish the recursions is motivated by the tropical proof of Kontsevich's formula and the WDVV equations \cite{GM053, MR08}.
For small values of $k$ resp.\ $d$ in  $\langle \psi^{k}L\rangle_d$ resp.\ for $\langle\psi L, \psi L\rangle_d$ for any $d$, we also provide another, more direct, method to compute our tropical rational descendant log Gromov--Witten invariants and use it to test it against our other formulas, confirming the results.

This paper is organized as follows. In Section \ref{sec-tropical}, we introduce parametrized tropical curves (a.k.a. tropical stable maps) and their moduli spaces. We also introduce tropical $\psi$-classes and their intersections. In Section \ref{sec-enumerative}, we introduce tropical enumerative problems which appear in our recursive formulas, in addition to the tropical rational descendant log Gromov--Witten invariants we focus on in this paper. In Section \ref{sec-explicit}, we provide more direct computations for small values of $k$ resp.\ $d$ resp.\ for $\langle\psi L, \psi L\rangle_d$, which we later use to test our recursive formulas. In Section \ref{sec-WDVV1}, we present the recursion for tropical rational descendant log Gromov--Witten invariants of the form $\langle \psi^{k}L\rangle_d$ and in Section \ref{sec-WDVV2} for tropical rational descendant log Gromov--Witten invariants of the form  $\langle\psi L, \psi L\rangle_d$.

\medskip

\textit{Acknowledgements.} We would like to thank Andreas Gross and Dhruv Ranganathan for interesting discussions. We thank an anonymous referee for useful comments to improve an earlier version of this paper. We acknowledge support from the SFB-TRR 195 \emph{Symbolic Tools in Mathematics and their Application} of the German Research Foundation (DFG) Project-ID 286237555, TRR 195, and the SNSF grant 204125.

\section{Tropical rational stable curves and maps and their moduli spaces}\label{sec-tropical}

We  briefly recall the notion of rational tropical curve and the definition of their moduli space. The latter was introduced by G. Mikhalkin in \cite{mikhalkin2007moduli}. We also refer to \cite{gathmann2009tropical} for more details.

	\subsection{Tropical curves}

		\subsubsection{Abstract curves.} Let $\overline{\Gamma}$ be finite graph without cycles and without bivalent vertices. The set of $1$-valent vertices is denoted by $\Gamma_\infty$, and its complement $\Gamma=\overline{\Gamma}\backslash\Gamma_\infty$ is endowed with a complete metric. Such a metric is defined up to isometry by declaring edges adjacent to a removed $1$-valent vertex to be isometric to $[0;+\infty[$ (such edges are called \textit{ends}), and the other edges to be isometric to some $[0;l]$ (such edges are called \textit{bounded}), and $l>0$ is called the \textit{length} of the edge.
		
		The ends of $\Gamma$ are indexed by the $1$-valent vertices of $\overline{\Gamma}$. The homeomorphism type of $\Gamma$, which is obtained after forgetting the metric, is called the \textit{combinatorial type} of $\Gamma$. For a fixed number of ends $m=|\Gamma_\infty|$, there are a finite number of combinatorial types, and the ends can be labelled by $[\![1;m]\!]$.

		\subsubsection{Parametrized rational tropical curves.} We consider parametrized rational tropical curves inside $\RR^2$. These are sometimes also called tropical stable maps. When the context is clear, we drop the parametrized and just speak about tropical curves.
		
		\begin{defi}
		A parametrized rational tropical curve in $\RR^2$ is a map $h:\Gamma\to\RR^2$ where:
		\begin{enumerate}[label=(\roman*)]
		\item $\Gamma$ is an abstract rational tropical curve,
		\item $h$ is affine with integer slope on each edge of $\Gamma$: the slope of $h$ along an oriented edge $e$, denoted by $\partial_e h$, lies in $\ZZ^2$,
		\item $h$ is balanced: at each vertex $V$ of $\Gamma$, $\sum_{e\ni V}\partial_e h=0$, where the edges are oriented outside $V$.
		\end{enumerate}
		For any edge of $\Gamma$, the integral length of $\partial_e h\in\ZZ^2$ is called the weight $w_e$ of $e$.
		\end{defi}

		\begin{defi}
		Let $h:\Gamma\to\RR^2$ be a parametrized rational tropical curve.
		\begin{itemize}[label=$\ast$]
		\item The edges with slope $0$ are called \textit{contracted edges}.
		\item The contracted ends are called marked points.
		\item The collection $\Delta$ of non-zero slopes of the ends of $\Gamma$ is called the \textit{degree}.
		\end{itemize}
		\end{defi}
		
		For a parametrized tropical curve $h:\Gamma\to\RR^2$, it is possible to label the ends by $[\![1;|\Delta|]\!]\sqcup[\![1;n]\!]$, where $\Delta$ is the degree, and $n$ is the number of marked points. We now define several specific degrees/classes.
		\begin{itemize}[label=-]
		\item A tropical curve is said to be of degree $dL$ (or just $d$) if the degree is
		$$\Delta_d=\{ (0,-1)^d,(-1,0)^d,(1,1)^d\}.$$
		Such curves correspond to degree $d$ curves inside $\PP^2$.
		\item A tropical curve is said to be of degree $dL-kE$ if the degree is
		$$\Delta_{d,k}=\{(0,-1)^{d-k},(-1,0)^{d-k},(-1,-1)^k,(1,1)^d\}.$$
		Such curves corresponds to curves inside the blow-up of $\CC P^2$ at one point.
		\item A tropical curve is said to be of degree $dL-k_1E_1-k_2E_2$ if the degree is
		$$\Delta_{d,k_1,k_2}=\{(0,-1)^{d-k_1-k_2},(-1,0)^{d-k_2},(-1,-1)^{k_1},(1,0)^{k_2},(1,1)^{d-k_2}\}.$$
		Such curves correspond to curves in the blow-up of $\PP^2$ at two distinct points.
		\end{itemize}
		
		\begin{figure}
		\begin{center}
		\begin{tabular}{ccc}
		\begin{tikzpicture}[line cap=round,line join=round,>=triangle 45,x=0.6cm,y=0.6cm]
		\draw[line width=1pt] (0,2)--++(1,0)--++(1,-1)--++(2,0)--++(2,2);
		\draw[line width=1pt] (0,3)--++(1,0)--++(2,2);
		\draw[line width=1pt] (2,0)--++(0,1);
		\draw[line width=1pt] (4,0)--++(0,1);
		\draw[line width=1pt] (1,2)--++(0,1);
		\end{tikzpicture} & \begin{tikzpicture}[line cap=round,line join=round,>=triangle 45,x=0.6cm,y=0.6cm]
		\draw[line width=1pt] (0,1)--++(1,0)--++(1,1)--++(1,-1)--++(1,0)--++(3,3);
		\draw[line width=1pt] (0,4)--++(1,0)--++(1,-1)--++(2,2);
		\draw[line width=1pt] (0,5)--++(1,0)--++(2,2);
		\draw[line width=1pt] (1,0)--++(0,1);
		\draw[line width=1pt] (3,0)--++(0,1);
		\draw[line width=1pt] (4,0)--++(0,1);
		\draw[line width=1pt] (2,2)-- node[midway,right] {$2$} ++(0,1);
		\draw[line width=1pt] (1,4)--++(0,1);
		\end{tikzpicture} & \begin{tikzpicture}[line cap=round,line join=round,>=triangle 45,x=0.6cm,y=0.6cm]
		\draw[line width=1pt] (0,1)--++(2,0)--++(1,1)--++(1,0)--++(2,2);
		\draw[line width=1pt] (0,2)--++(1,0)--++(1,1)--++(1,0)--++(2,2);
		\draw[line width=1pt] (0,4)--++(2,0)--++(2,2);
		\draw[line width=1pt] (1,0)--++(0,2);
		\draw[line width=1pt] (2,0)--++(0,1);
		\draw[line width=1pt] (4,0)--++(0,2);
		\draw[line width=1pt] (3,2)--++(0,1);
		\draw[line width=1pt] (2,3)--++(0,1);
		\end{tikzpicture} \\
		$2L$ & $3L$ & $3L$ \\
		\begin{tikzpicture}[line cap=round,line join=round,>=triangle 45,x=0.6cm,y=0.6cm]
		\draw[line width=1pt] (0,0)--++(2,2)--++(1,0)--++(2,2);
		\draw[line width=1pt] (0,3)--++(2,0)--++(2,2);
		\draw[line width=1pt] (3,0)--++(0,2);
		\draw[line width=1pt] (2,2)--++(0,1);
		\end{tikzpicture} & \begin{tikzpicture}[line cap=round,line join=round,>=triangle 45,x=0.6cm,y=0.6cm]
		\draw[line width=1pt] (0,0)--++(3,3)--++(1,0)--++(2,2);
		\draw[line width=1pt] (0,1)--++(3,0)--++(1,1)--++(1,0)--++(2,2);
		\draw[line width=1pt] (0,4)--++(3,0)--++(2,2);
		\draw[line width=1pt] (3,0)--++(0,1);
		\draw[line width=1pt] (5,0)--++(0,2);
		\draw[line width=1pt] (4,2)--++(0,1);
		\draw[line width=1pt] (3,3)--++(0,1);
		\end{tikzpicture} & \begin{tikzpicture}[line cap=round,line join=round,>=triangle 45,x=0.6cm,y=0.6cm]
		\draw[line width=1pt] (0,0)--++(2,2)--++(1,0)--++(1,-1)--++(2,0);
		\draw[line width=1pt] (0,3)--++(3,0)--++(2,2);
		\draw[line width=1pt] (0,4)--++(2,0)--++(2,2);
		\draw[line width=1pt] (4,0)--++(0,1);
		\draw[line width=1pt] (2,2)--++(0,2);
		\draw[line width=1pt] (3,2)--++(0,1);
		\end{tikzpicture} \\
		$2L-E$ & $3L-E$ & $3L-E_1-E_2$ \\
		\end{tabular}
		\caption{\label{figure examples of tropical curves}Examples of tropical curves inside $\RR^2$ along with their degree.}
		\end{center}
		\end{figure}
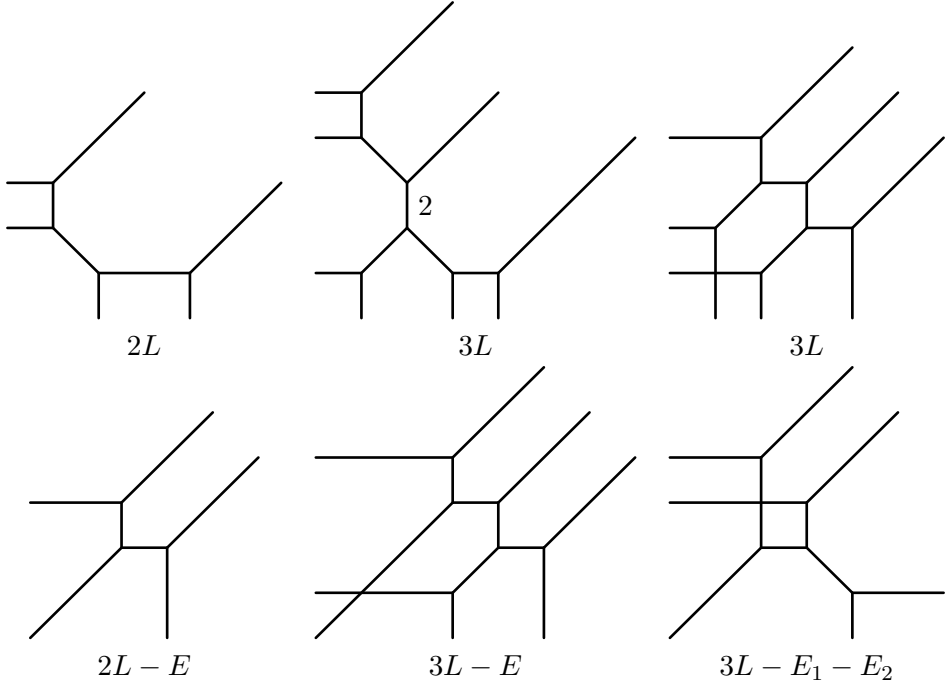				
		
		\begin{expl}
		In Figure \ref{figure examples of tropical curves} we can see several examples of tropical curves along with their class. The ``$2$" next to the edge of the second curve denotes the weight of the edge when it is not equal to $1$.
		\end{expl}

	\subsection{Moduli space of rational tropical curves and maps}
	
	In this section, we recall the definition of the moduli space of rational tropical curves, along with the definition of $\psi$-cycles and the maps defined on these moduli spaces.
	
		\subsubsection{Moduli spaces of tropical curves.} The moduli space of abstract rational tropical curves with $m$ ends is denoted by $\M_{0,m}$. It is introduced in \cite{mikhalkin2007moduli}, and constructed as follows: to each combinatorial type corresponds an orthant $\RR_{\geqslant 0}^{|E|}$, where $E$ is the set of bounded edges. The coordinates correspond to the lengths that we assign to the edges of the graph. The orthants are then glued together, as the boundary of an orthant corresponds to the graph where we contract the edges of length $0$. The space $\M_{0,m}$ is an abstract cone complex that can be embedded inside some vector space, endowing it with a lattice structure that makes it into a \textit{balanced fan} \cite{gathmann2009tropical}.
		
		The fan $\M_{0,m}$ is a fan of pure dimension $m-3$. The top-dimensional cones are in bijection with the combinatorial types for which the underlying graph is trivalent: each vertex has exactly three neighbours.
		
		\begin{expl}
		The moduli space $\M_{0,4}$ (with ends labelled by $[\![1;4]\!]$) is of pure dimension $1$, it has exactly three top-dimensional cones corresponding to the three trivalent graphs with four ends: $12//34$, $13//24$ and $14//23$. The apex of these three cones corresponds to the graph with a unique vertex adjacent to four ends.  The coordinate on each of the cone is the length of the unique bounded edge.
		\end{expl}
		
		The moduli space of parametrized rational tropical curves of degree $\Delta$ and $n$ marked points is denoted by $\M_{0,n}(\RR^2,\Delta)$. It is in bijection  with $\RR^2\times\M_{0,n+|\Delta|}$: a map $h:\Gamma\to\RR^2$ is uniquely determined by the data of $\Gamma$, the slopes of the ends (\textit{i.e.} $\Delta$), and the image of one chosen vertex of $\Gamma$ inside $\RR^2$ \cite{gathmann2009tropical}. The splitting as $\RR^2\times\M_{0,n+|\Delta|}$ is not unique, as it depends on the choice of a vertex.

		\subsubsection{Tropical $\psi$-classes.} Tropical $\psi$-classes on the moduli space of abstract rational tropical curves $\M_{0,m}$ have been introduced in \cite{mikhalkin2007moduli}, their intersections have been computed in \cite{kerber2009intersecting}. We refer to \cite{MR08} for a more thorough introduction on tropical $\psi$-classes, where the definition is generalized to the moduli space of parametrized tropical curves $\M_{0,n}(\RR^2,\Delta)$, using the tropical intersection theory methods from \cite{allermann2010first}. Each monomial $\psi_1^{k_1}\cdots\psi_n^{k_n}$ defines a weighted subfan of $\M_{0,n}(\RR^2,\Delta)$, described as follows in \cite{kerber2009intersecting, MR08}.
		
		\begin{defi}
		The subfan $\M_{0,n}(\RR^2,\Delta)\cap\psi_1^{k_1}\cdots\psi_n^{k_n}$ is the subfan of pure dimension $|\Delta|-1-\sum k_i+n$  containing as top-dimensional cones the ones corresponding to the combinatorial types satisfying the following conditions: for each vertex $V$, let $I_V\subset[\![1;n]\!]$ be the subset of marked points adjacent to $V$,
		$$\val(V)=3+\sum_{i\in I_V}k_i,$$
		with a weight equal to
		$$\frac{\prod_V\left( \sum_{i\in I_V}k_i\right)!}{\prod_i k_i!}.$$
		\end{defi}
		
		\begin{expl}
		The cones in the fan $\M_{0,n}(\RR^2,\Delta)\cap\psi_1^{k}$ correspond to the graphs where the vertex adjacent to the first marked point has valency $k+3$. As the end is contracted, we only see a $(k+2)$-valent vertex inside $\RR^2$. All the weights are $1$. See for instance the first curve on Figure \ref{figure example tropical curve psi constraint}.
		
		The cones in the fan $\M_{0,n}(\RR^2,\Delta)\cap\psi_1^{k_1}\psi_2^{k_2}$ correspond to graphs with marked points $1$ and $2$ adjacent to different vertices of respective valency $k_1+3$ and $k_2+3$, but also the one where $1$ and $2$ are adjacent to the same vertex of valency $k_1+k_2+3$. Cones of the latter form are counted with a weight $\frac{(k_1+k_2)!}{k_1!k_2!}$. See the remaining curves on Figure \ref{figure example tropical curve psi constraint}.
		\end{expl}
		
		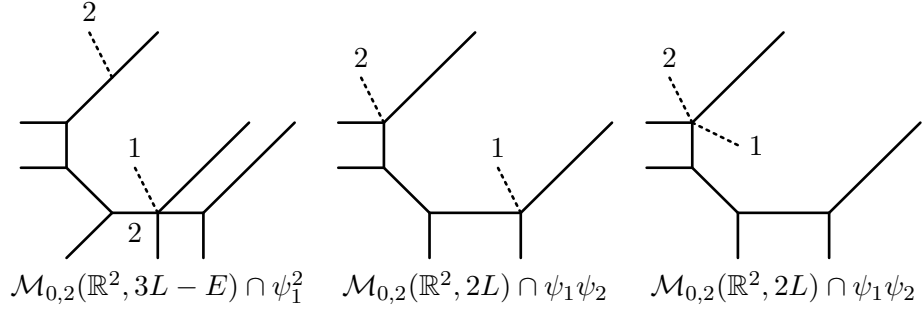
\begin{figure}
		\begin{center}
		\begin{tabular}{ccc}
		\begin{tikzpicture}[line cap=round,line join=round,>=triangle 45,x=0.6cm,y=0.6cm]
		\draw[line width=1pt] (1,0)--++(1,1)-- node[midway,below] {$2$} ++(1,0)--++(1,0)--++(2,2);
		\draw[line width=1pt] (3,0)--++(0,1)--++(2,2);
		\draw[line width=1pt] (2,1)--++(-1,1)--++(0,1)--++(2,2);
		\draw[line width=1pt] (4,0)--++(0,1);
		\draw[line width=1pt] (0,2)--++(1,0);
		\draw[line width=1pt] (0,3)--++(1,0);
		
		\draw[line width=1pt,dotted] (3,1)--++(-0.5,1) node[above] {$1$};
		\draw[line width=1pt,dotted] (2,4)--++(-0.5,1) node[above] {$2$};
		\end{tikzpicture} & \begin{tikzpicture}[line cap=round,line join=round,>=triangle 45,x=0.6cm,y=0.6cm]
		\draw[line width=1pt] (0,2)--++(1,0)--++(1,-1)--++(2,0)--++(2,2);
		\draw[line width=1pt] (0,3)--++(1,0)--++(2,2);
		\draw[line width=1pt] (2,0)--++(0,1);
		\draw[line width=1pt] (4,0)--++(0,1);
		\draw[line width=1pt] (1,2)--++(0,1);
		\draw[line width=1pt,dotted] (4,1)--++(-0.5,1) node[above] {$1$};
		\draw[line width=1pt,dotted] (1,3)--++(-0.5,1) node[above] {$2$};
		\end{tikzpicture} & \begin{tikzpicture}[line cap=round,line join=round,>=triangle 45,x=0.6cm,y=0.6cm]
		\draw[line width=1pt] (0,2)--++(1,0)--++(1,-1)--++(2,0)--++(2,2);
		\draw[line width=1pt] (0,3)--++(1,0)--++(2,2);
		\draw[line width=1pt] (2,0)--++(0,1);
		\draw[line width=1pt] (4,0)--++(0,1);
		\draw[line width=1pt] (1,2)--++(0,1);
		\draw[line width=1pt,dotted] (1,3)--++(1,-0.5) node[right] {$1$};
		\draw[line width=1pt,dotted] (1,3)--++(-0.5,1) node[above] {$2$};
		\end{tikzpicture} \\
		$\M_{0,2}(\RR^2,3L-E)\cap\psi_1^2$ & $\M_{0,2}(\RR^2,2L)\cap\psi_1\psi_2$ & $\M_{0,2}(\RR^2,2L)\cap\psi_1\psi_2$ \\
		\end{tabular}
		\caption{\label{figure example tropical curve psi constraint}Tropical curves satisfying some $\psi$-constraint.}
		\end{center}
		\end{figure}
		
		Results from \cite{kerber2009intersecting} assert that the subfan defined by $\psi_1^{k_1}\cdots\psi_n^{k_n}$ is balanced in the sense of \cite{allermann2010first}.

		\subsubsection{Forgetful maps.} We have well-defined maps between the moduli spaces of tropical curves. The first are obtained by forgetting some of the ends of the curve, and then \textit{stabilizing}, which means delete the univalent and bivalent vertices that may appear through this deletion. For any subset $I$ of $[\![1;m]\!]$ of size $m'$, we have a map to $\M_{0,m'}$ only remembering the ends indexed by $I$:
		$$\mathrm{ft}_I:\M_{0,m}\to\M_{0,m'}.$$
		
		We also have forgetful maps $\M_{0,n}(\RR^2,\Delta)\to\M_{0,n+|\Delta|}$ obtained by forgetting the map to $\RR^2$. It is then possible to compose with a forgetful map to some $\M_{0,m}$.
		
		\begin{expl}
		We especially need the forgetful maps of the following form
		$$\mathrm{ft}_4:\M_{0,n}(\RR^2,\Delta)\to\M_{0,4},$$
		obtained by only remembering the relative position of four ends. Such a map is referred as \textit{tropical cross-ratio}. Concretely, the cross-ratio is just the length of the path between the ends not forgotten by the map. On Figure \ref{figure example of cross-ratio and forgetful map} we see a curve in $\M_{0,4}(\RR^2,3L)\cap\psi_1\psi_2$. The image by the forgetful map is depicted in red and blue. The combinatorial type in $\M_{0,4}$ is $12//34$, and the coordinate along the ray is the length of the blue path. Notice that for the blue edge of weight $2$, the slope of $h$ is $(0,2)$. Thus, the length contributing to the cross-ratio is not the length appearing inside $\RR^2$ but half of it.
		\end{expl}
		
		\begin{figure}
		\begin{center}
		\begin{tikzpicture}[line cap=round,line join=round,>=triangle 45,x=0.6cm,y=0.6cm]
		\draw[line width=1pt] (-1,1)--++(2,0); \draw[line width=1pt,red] (1,1)--++(1,1)--++(1,-1)--++(1,0); \draw[line width=1pt] (4,1)--++(3,3);
		\draw[line width=1pt] (-1,4)--++(1,0); \draw[line width=1pt,red] (0,4)--++(1,0);\draw[line width=1pt,blue] (1,4)--++(1,-1);\draw[line width=1pt] (2,3)--++(2,2);
		\draw[line width=1pt] (-1,5)--++(2,0)++(1,1)--++(1,1); \draw[line width=1pt,red] (1,5)--++(1,1);
		\draw[line width=1pt] (1,0)--++(0,1);
		\draw[line width=1pt] (3,0)--++(0,1);
		\draw[line width=1pt] (4,0)--++(0,1);
		\draw[line width=1pt,blue] (2,2)-- node[midway,right] {$2$} ++(0,1);
		\draw[line width=1pt,red] (1,4)--++(0,1);
		\draw[line width=1pt,dotted,red] (1,1)--++(-0.5,1) node[above] {$1$};
		\draw[line width=1pt,dotted,red] (4,1)--++(-0.5,1) node[above] {$2$};
		\draw[line width=1pt,dotted,red] (2,6)--++(-0.5,1) node[above] {$3$};
		\draw[line width=1pt,dotted,red] (0,4)--++(-0.5,-1) node[below] {$4$};
		\end{tikzpicture}
		\caption{\label{figure example of cross-ratio and forgetful map}Tropical cross-ratio between four marked points on a tropical curve.}
		\end{center}
		\end{figure}

		\subsubsection{Evaluation maps.} Beside the forgetful maps, we also have maps defined on the moduli space of parametrized tropical curves that evaluate the position of the ends and marked points: for each end $e$, we have
		$$\begin{array}{rccl}
\mathrm{ev}_e:& \M_{0,n}(\RR^2,\Delta) & \longrightarrow & \RR^2/\ang{n_e} \\
 & (h,\Gamma) & \longmapsto & h(e) \\		
		\end{array},$$
		where $w_e n_e$ is the slope of the end $e$, so that $h(e)$ is well-defined as it does not depend on the choice of point inside $e$ that we evaluate. In particular, for a marked point, \textit{i.e.} $n_e=0$, the evaluation map takes values inside $\RR^2$. The evaluation maps and forgetful maps are linear map on any cone of $\M_{0,n}(\RR^2,\Delta)$.
		
		If $n_e=0$, the codomain $\RR^2$ contains the lattice $\ZZ^2$. If $n_e\neq 0$, the quotient space $\RR^2/\ang{n_e}$ also contains the lattice $\ZZ^2/\ang{n_e}$. An isomorphism with $\ZZ\subset\RR$ is given by taking $\det(n_e,-):\RR^2/\ang{n_e}\to\RR$.

\section{Enumerative problems and enumerative invariants}\label{sec-enumerative}

	\subsection{Tropical enumerative problems appearing in our recursions}\label{subsec-enum}
	
	We recall several tropical enumerative problems that lead to different enumerative invariants. For each of them, we have two slightly different points of view on the same approach. First, we can view the problem enumeratively as counting tropical curves satisfying a number of constraints, with a suitable multiplicity so that the count does not depend on the choice of the constraints. As noticed in \cite{allermann2010first}, the invariant is in fact equal to the intersection number between the image of the moduli space of curves by an adhoc evaluation map, and a cycle $\Xi$ corresponding to the constraints. The multiplicity of a tropical curve is then equal to the intersection index at the point of the moduli space corresponding to the curve.
	
	The examples in this section introduce the notations for the various enumerative invariants needed for the statement of the recursive formulas and computations of descendant invariants with line constraints.

		\subsubsection{Invariants with point constraints.}\label{subsec-enumpoints}
		
		We consider degree $\Delta\subset\ZZ^2$ rational tropical curves with $n$ marked points. We consider the evaluation map:
		$$\mathrm{ev}:\M_{0,n}(\RR^2,\Delta)\to(\RR^2)^n,$$
		that evaluates the position of the marked points and ends. For $n=|\Delta|-1$, let $\Xi=\prod_1^n \{P_i\}$ be a cycle, where $(P_i)$ is a generic configuration of points in $\RR^2$. As domain and codomain share the same dimension, and $\Xi$ is $0$-dimensional, the image and $\Xi$ are of complementary dimension. We then intersect the image of the evaluation map with $\Xi$ to get an invariant, which amounts to look at the preimages under $\mathrm{ev}$. It corresponds to a count of degree $\Delta$ rational curves passing through a generic configuration of $|\Delta|-1$ points inside $\RR^2$, with a multiplicity. We speak about $P$-constraint. If $\Xi$ is chosen generically, the curves are trivalent rational tropical curves, and the multiplicity $m_\Gamma$ is then as given by the correspondence theorem from \cite{mikhalkin2005enumerative}:
		$$m_\Gamma=\prod_V m_V,$$
		where $m_V=|\det(a_V,b_V)|$, and $a_V$, $b_V$ are the outgoing slopes of two of the edges adjacent to $V$. The enumerative invariant counting the tropical curves in $\M_{0,n}(\RR^2,\Delta)$ which are in the inverse image of $\Xi$ under $\mathrm{ev}$ with multiplicity is denoted by $N_\Delta$.
		
		
		\begin{rem}
		By only caring about the marked points and not the ends of non-zero slope, we usually forget about their labelling: the enumerative invariant  is the intersection number divided by the number of labellings of the ends. If $\Delta=\{n_1^{d_1},\cdots,n_r^{d_r}\}$, this number is equal to $\prod_{i=1}^r d_i !$. For instance, for $\Delta_d$, we divide by $(d!)^3$.
		\end{rem}
		
		\begin{expl}
		For $\Delta_d$, we count rational curves of degree $d$ passing through $3d-1$ points in generic position, this number is denoted by $N_d$. For instance, after the division by $(d!)^3$,
		$$N_1=N_2=1,\ N_3=12.$$
		\end{expl}
		
		\begin{expl}
		For $\Delta_{d,k}$, we count curves of degree $dL-kE$, passing through $3d-k-1$ points in generic position. The number is denoted by $N_{dL-kE}$. For instance,
		$$N_{2L-E}=1,\ N_{3L-E}=12,\ N_{3L-2E}=1.$$
		\end{expl}
		
		\begin{expl}
		For $\Delta_{d,k_1,k_2}$, we count curves of degree $dL-k_1E_1-k_2 E_2$ passing through $3d-k_1-k_2-1$ points in generic position. The number is denoted by $N_{dL-k_1E_1-k_2 E_2}$. For instance,
		$$N_{3L-E_1-E_2}=12,\ N_{3L-2E_1-E_2}=1.$$
		\end{expl}
		
		\subsubsection{Variation on the point constraints.} We also have the following variant of the enumerative problem where some of the marked points have to lie on $1$-dimensional cycles inside $\RR^2$. In that case, we choose $n=|\Delta|-1+r$, $C_1,\dots,C_r$ tropical curves inside $\RR^2$, and we take the cycle of constraints to be
		$$\Xi=\prod_1^{|\Delta|-1}\{P_i\}\times\prod_1^r C_i,$$
		which amounts to count tropical curves passing through $|\Delta|-1$ points in generic position, and intersect each of the curves $C_i$ somewhere. The solutions are easily obtained from the curves passing through $(P_i)$, as the remaining marked points just have to be chosen at the intersection points between each solution to the point problem, and the curves $C_i$. For each choice of intersection points, the multiplicity is equal to $\prod m_V$ times the product of intersection indices at the chosen intersection points. In the end, the enumerative invariant is equal to
		$$N_\Delta\cdot \prod_1^r (\Delta\cdot C_i) .$$
		
		\subsubsection{Relative invariants.} The following problem is another variation on the preceding one where we now care on the position of some of the unbounded ends. Let $I\subset\Delta$ a subset of ends, we take $n=|\Delta|-|I|-1$ and
		$$\mathrm{ev}:\M_{0,n}(\RR^2,\Delta)\to(\RR^2)^n\times\prod_I \RR^2/\ang{n_i},$$
	$$\Xi=\prod_1^n \{P_i\}\times\prod_I \{\mu_i\}.$$
	It amounts to count curves satisfying the point constraints and having ends indexed by $I$ fixed. The multiplicity is also given by
	$$m_\Gamma=\frac{1}{\prod w_i}\prod_V m_V,$$
	where the product on the denominator is the product of the weights of the ends whose position is evaluated. This correction factor to the multiplicity, which only depends on the degree and constraints (\textit{i.e.} not on the curves) and already appears in \cite{gross2010tropical} and \cite{gathmann2007caporaso}, is necessary to have a correspondence statement when dealing with tangency conditions. In particular, we also get a tropical invariant if we use the multiplicity $m_\Gamma=\prod m_V$ instead, which is the convention we adopt in the rest of the paper. We now introduce two notations which will be useful in the statement of the recursive formula from Theorem \ref{theorem recursive formula r=1}.
	
	\begin{defi} We set
	\begin{itemize}[label=-]
	\item Let $N_d(w)$ be the number of tropical curves of degree $\{(1,1)^d,(-1,0)^d,(0,-1)^{d-w},(0,-w)\}$ passing through a generic configuration of $3d-w$ points in generic position with multiplicity $m_\Gamma=\prod m_V$, with the choice of a bottom end of weight $w$.
	\item Let $\widetilde{N}_d(w)$ be the number of tropical curves of degree $\{(1,1)^d,(-1,0)^d,(0,-1)^{d-w},(0,-w)\}$ passing through a generic configuration of $3d-w-1$ points in generic position and a fixed end of weight $w$, with multiplicity $m_\Gamma=\prod m_V$.
	\end{itemize}
	\end{defi}
	
	This definition also naturally extends to tropical degrees which are not just $dL$ but the other degrees as well.
	
	\begin{rem}
	For $w\neq 1$, $N_d(w)$ is just a usual relative invariants computed by the Caporaso-Harris formula from \cite{gathmann2007caporaso}, while $\widetilde{N}_d(w)$ differs from the usual relative invariant by a factor $w$, since we dropped the denominator from the multiplicity in our convention. For $w=1$, the only difference is that here we choose a bottom end for the computation of $N_d(1)$, of which there are $d$, so that $N_d(1)=d\cdot N_d$.
	\end{rem}
	
	\begin{expl}
	By counting curves of degree $2$ but having a bottom end of weight $2$ instead of two ends of weight $1$, we have $N_2(2)=2$. The usual relative invariant counting conics passing through $3$ points and a fixed tangency to a line would be $1$, but due to our choice of convention deleting the denominator, we get $\widetilde{N}_2(2)=2$. Similarly, by counting curves of degree $3$ having bottom ends of weight $1$ and $2$ instead of $1$ three times, we can compute $N_3(2)=22$ and $\widetilde{N}_3(2)=20$.
	\end{expl}
	
	\begin{rem}
	It is also possible to mix this situation with the other variant by adding tropical curves constraints.
	\end{rem}
		
		\subsubsection{Descendant invariants with point insertions.} We finish by considering the evaluation map restricted to the subfan of $\M_{0,n}(\RR^2,\Delta)\cap\psi_1^{k_1}\cdots\psi_n^{k_n}$, which is of pure dimension $|\Delta|+n-1-\sum k_i$. We still have the evaluation map:
		$$\mathrm{ev}:\M_{0,n}(\RR^2,\Delta)\cap\psi_1^{k_1}\cdots\psi_n^{k_n}\longrightarrow (\RR^2)^n.$$
		If $n=|\Delta|-1-\sum k_i$, we can consider the cycle of constraints
	$$\Xi=\prod_1^n \{ P_i\},$$		
		which is of complementary dimension (here $0$). A point constraint coupled to a $\psi$-class constraint is called a $\psi^k P$-constraint: the tropical curves in the inverse image of the evaluation map have a vertex of fixed valency $k+3$ (counting the contracted marked end) at a specific point in $\RR^2$. By genericity of $\Xi$, no cone corresponding to curves having more than one marked point adjacent to the same vertex can contribute a solution, since marked points are mapped to different points inside $\RR^2$. The enumerative problem amounts to look for rational curves having vertices of prescribed valency at fixed points inside $\RR^2$. The multiplicity of a tropical curve is equal to
		$$m_\Gamma=\prod_V m_V,$$
		where the product is over the trivalent vertices not adjacent to any marking. The invariant is denoted by
		$$\ang{\psi^{k_1}P,\cdots,\psi^{k_r}P}_\Delta,$$
		where we forget about the point constraints (\textit{i.e.} $k_i=0$): we assume that there are the right number of point constraints so that the dimension count makes sense.
		
		\begin{expl}
		The notation $\ang{\psi^k P}_d$ counts degree $d$ curves with a $\psi^k P$-constraint passing through an additional $3d-2-k$ points in generic position.
		
		The notation $\ang{\psi^{k_1} P,\psi^{k_2} P}_d$ counts degree $d$ curves with a $\psi^{k_1} P$-constraint, a $\psi^{k_2}P$-constraint passing through an additional $3d-2-k_1-k_2$ points in generic position.
		\end{expl}
		
		These invariants with only $\psi^k P$-constraints have already been considered in \cite{MR08}, where a recursive formula of WDVV type and a generalized lattice path algorithm is provided. In \cite{BGM10}, a floor diagram algorithm to compute them has been developed.

	\begin{rem}
	Notice that due to the higher-valency conditions imposed by the $\psi$-classes, it is now possible to get parametrized tropical curves having non-trivial automorphism group. For instance, two ends adjacent to the same vertex can be exchanged. Thus, by forgetting about the labelling of the ends, \textit{i.e.} dividing by $(d!)^3$ in the case of degree $d$ curves, we count the tropical curves $h:\Gamma\to\RR^2$ with a coefficient equal to $\frac{1}{|\mathrm{Aut}\Gamma|}$, because there are not always $(d!)^3$ different possible labellings anymore.
	\end{rem}
	
	\begin{figure}
	\begin{center}
	\begin{tabular}{cc}
	\begin{tikzpicture}[line cap=round,line join=round,>=triangle 45,x=0.6cm,y=0.6cm]
		\draw[line width=1pt] (0,2)--++(1,0)--++(1,-1)--++(2,2);
		\draw[line width=1pt] (0,3)--++(1,0)--++(2,2);
		\draw[line width=1pt] (2,0)--++(0,1);
		\draw[line width=1pt] (2.1,0)--++(0,-1);
		\draw[line width=1pt] (1.9,0)--++(0,-1);
		\draw[line width=1pt] (1,2)--++(0,1);
		\draw (2,0) node {$\bullet$};
		\draw (1.5,1.5) node {$+$};
		\draw (1,2.5) node {$\times$};
		\draw (2,4) node {$+$};
		\end{tikzpicture} & \begin{tikzpicture}[line cap=round,line join=round,>=triangle 45,x=0.6cm,y=0.6cm]
		\draw[line width=1pt] (0,2)--++(1,0)--++(1,-1)--++(2,0)--++(2,2);
		\draw[line width=1pt] (0,3)--++(1,0)--++(2,2);
		\draw[line width=1pt] (2,0)--++(0,1);
		\draw[line width=1pt] (4,0)--++(0,1);
		\draw[line width=1pt] (1,2)--++(0,1);
		\draw (1,3) node {$\bullet$};
		\draw (2,0.5) node {$\times$};
		\draw (3,1) node {$\times$};
		\draw (5,2) node {$+$};
		\end{tikzpicture} \\
	\end{tabular}
	\caption{\label{figure curve with automorphism}Tropical curves contributing for $\ang{\psi P}_2$ for two different choices of point constraints.}
	\end{center}
	\end{figure}
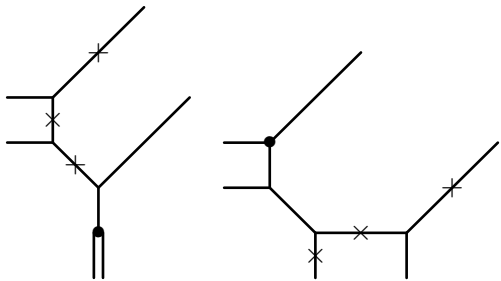
	
	\begin{expl}
	In Figure \ref{figure curve with automorphism} we can see the unique degree $2$ tropical curve satisfying three $P$-constraints and a unique $\psi P$-constraint, for two different choices of point constraints. The first curve has a vertex of multiplicity $2$, but also an automorphism since the two bottom ends can be exchanged. Thus, each of them has multiplicity $1$ and we get that $\ang{\psi P}_2=1$.
	\end{expl}

	\subsection{Descendant invariants with line insertions}\label{subsec-tropicaldesc}

We now consider the descendant invariants where we authorize the $\psi$-classes to be coupled with line constraints, line meaning the standard tropical line inside $\RR^2$. We call it $\psi^k L$-constraints. It means that we consider the evaluation map
$$\mathrm{ev}:\M_{0,n}(\RR^2,d)\cap\psi_1^{k_1}\cdots\psi_n^{k_n}\to(\RR^2)^n,$$
and we intersect it with a cycle of the form
$$\Xi=\prod_1^n \Xi_i,$$
where $\Xi_i\subset\RR^2$ is either a point or a tropical line. We assume the cycle is of complementary dimension, so that the intersection number yields an enumerative invariant. This invariant is usually denoted by $\ang{\tau_{k_1}(d_1)\cdots\tau_{k_n}(d_n)}^{\log}_{0,d}$, where $d_i=\mathrm{codim}\Xi_i$, but with our notations, it is denoted by
$$\ang{\psi^{k_1}\Xi_1,\cdots,\psi^{k_n}\Xi_n}_d,$$
where $\Xi_i$ is either $L$ or $P$.

We now explain how to compute the multiplicity of a tropical curve that satisfies the constraints: it belongs to $\M_{0,n}(\RR^2,d)\cap\psi_1^{k_1}\cdots\psi_n^{k_n}$, and the $i$-th marked point is mapped to the cycle $\Xi_i$, which is a point $P_i$ or a tropical line $L_i$ inside $\RR^2$. Let $\Xi=\prod_i\Xi_i\subset(\RR^2)^n$ be a generic cycle of constraints. For simplicity, we assume that every $\psi$-constraint is coupled with a line, so that we do not have any $\psi^k P$ -constraint with $k\geqslant 1$. The statements on the relative position of marked points on the curve can easily be adapted to the setting with $\psi^k P$-constraints, and the multiplicities are changed by making the product over vertices going only over the trivalent vertices not adjacent to any marking.

\subsubsection{Case with one line.} \label{subsec-caseoneline} Assume there is only one line constraint coupled with a $\psi$-condition: we consider $\ang{\psi^k L}_d$. We have the evaluation map
$$\mathrm{ev}:\M_{0,n+1}(\RR^2,d)\cap\psi_1^k\to(\RR^2)^n\times\RR^2,$$
where the first $n$ marked points are $P$-constraints, and the last marked point is coupled to the $\psi^k L$-constraint: $\Xi=\prod_1^n\{P_i\}\times L$. By dimension count, we have $n+k=3d-1$.

\begin{lem}\label{lemma position points k=1}
In the above setting, let $h:\Gamma\to\RR^2$ be a tropical solution to the problem. Let $\Gamma\backslash\P$ be the complement of marked points. There is a unique bounded component, the $\psi^k L$-marking (\textit{i.e.} the marked point subject to the $\psi^k L$-constraint) is adjacent to it. Every other component contains a unique end.
\end{lem}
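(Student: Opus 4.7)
The plan is to combine a combinatorial count of connected components with a dimension-theoretic argument exploiting genericity. First I would use the $\psi^k$ condition together with genericity of the constraints to argue that every tropical solution lies in a top-dimensional cell of $\M_{0,n+1}(\RR^2,d)\cap\psi_{n+1}^k$. In such a cell the $\psi^k L$-marked vertex $V_L$ has valency $k+3$ (counting its contracted end), and every other vertex of $\Gamma$ is trivalent. In particular each $P$-marked vertex is trivalent and, by balancing, its two non-contracted edges have opposite slopes; one may therefore pass to the reduced tree $\Gamma^{\mathrm{red}}$ obtained by un-subdividing the $P$-markings into interior points on edges. This tree has $T+1$ internal vertices (with $T=3d-k-2$), $T$ bounded edges and $3d+1$ ends.

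Next I would perform the component count on $\Gamma^{\mathrm{red}}\setminus\P$. Removing an interior point from a tree increases the number of components by one, and removing the degree-$(k+3)$ vertex $V_L$ (with its orphan contracted end discarded) contributes $k+1$ new components. Hence $\Gamma^{\mathrm{red}}\setminus\P$ has exactly $1+(k+1)+n=3d+1$ components, and the $3d$ unbounded ends of degree $\Delta_d$ are distributed among them.

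The heart of the argument is to show that each component contains at most one unbounded end. For a component $C$ with $r_C$ unbounded ends, $s_C$ cut half-edges and $b_C$ internal bounded edges, the trivalence formula gives $s_C+r_C=b_C+3$. A direct count of parameters (one root position plus internal and cut lengths) against constraints (each cut endpoint contributes two, with a single shared one-dimensional correction from $V_L\in L$) shows that the local free dimension of $C$ equals $r_C-1$. Summing yields $\sum(r_C-1)=3d-(3d+1)=-1$, exactly compensated by the one free parameter for $V_L$ sliding along $L$. Genericity then forces every local free dimension to be nonnegative: a component with $r_C\geq 2$ would carry strictly positive local freedom that cannot be absorbed globally, contradicting transversality of the intersection with $\Xi$. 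Hence $r_C\in\{0,1\}$ and, as $\sum r_C=3d$ with $3d+1$ components, exactly one component is bounded and all others contain exactly one end.

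Finally I would confirm adjacency of $V_L$ to the bounded component by tracking the $k+2$ non-contracted edges of $V_L$: each contributes a cut half-edge to one of the $k+2$ pieces of $\Gamma^{\mathrm{red}}\setminus V_L$, and keeping book of how internal $P$-markings further decompose each piece identifies the unique piece whose root subcomponent is the bounded one; $V_L$ borders that subcomponent via the corresponding cut. I expect the main obstacle to be the genericity step in the dimension argument, namely making precise that combinatorial types with some $r_C\geq 2$ correspond to moduli strata whose evaluation image into $(\RR^2)^n\times L$ has strictly smaller dimension than expected, so that generic constraints do not meet them.
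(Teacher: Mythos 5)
Your component count ($k+2+n=3d+1$ components, with $3d$ ends to distribute) is exactly the paper's first step, and your general strategy (count components, then use genericity to force at most one end per component) matches the paper's. But two genericity steps are not actually carried out, and one of them is the crux of the lemma. First, your assertion that ``genericity forces every local free dimension to be nonnegative'' is inconsistent with your own computation $\sum_C(r_C-1)=-1$: the unique bounded component necessarily has local free dimension $-1$, so the statement cannot hold for every component. What is really needed is that no component can contain \emph{two} ends, because the string joining them could then be deformed while fixing all markings, producing a $1$-parameter family of solutions and contradicting the finiteness of the (transverse, generic) intersection with $\Xi$; this is the argument the paper uses, citing the string deformations of Gathmann--Markwig. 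Your dimension bookkeeping could be repaired to say this, but as written the component containing $V_L$ is coupled to its $k+1$ siblings through the condition that their cut endpoints coincide at a common point of $L$, and you treat this coupling only as ``a single shared one-dimensional correction'' without ever isolating which subsystem absorbs the deficit.

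Second, and more seriously, the adjacency of the $\psi^k L$-marking to the bounded component is never proved: your final paragraph only describes how to locate the bounded component inside the decomposition, which presupposes that it is a ``root subcomponent'' of one of the $k+2$ pieces at $V_L$. A priori the bounded component could be separated from $V_L$ by $P$-markings, i.e.\ bounded entirely by point constraints. The paper excludes this with the tropical Menelaus theorem: if the boundary of the bounded component consisted only of $P$-markings $P_i$ lying on edges of outgoing slopes $n_i$, then $\sum_i\det(n_i,P_i)=0$, a nontrivial linear relation that fails for a generic configuration. Equivalently, in your language, such a component is an overconstrained subsystem ($2s_C$ generic conditions on $2s_C-1$ parameters) that is decoupled from the sliding of $V_L$ along $L$ and hence cannot borrow the compensating free parameter. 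Without this step the lemma is not established; I would add it explicitly, either via Menelaus or by completing the decoupled dimension count for components not adjacent to $V_L$.
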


\begin{proof}
By genericity of the constraints, the marked point associated to the $\psi^kL$-constraint has valency $k+2$ ($k+3$ including the end to which it corresponds), so that it disconnects $\Gamma$ into $k+2$ components. Every other marked point is on a trivalent vertex, i.e.\ its image lies on the interior of an edge, so that it increases by $1$ the number of connected components. In the end, we get $k+2+n=3d+1$ connected components.

A connected component cannot contain more than two ends, otherwise it is possible to deform the path linking one to the other, yielding a $1$-parameter family of solutions, which is impossible as $\Xi$ is of complementary dimension and has been chosen generically. Hence, we get exactly $3d$ connected components containing each an end. The remaining component has to be bounded.

If the $\psi^k L$-marking was not adjacent to the bounded component, only $P$-constraints would lie on its boundary. This is impossible by the genericity assumption: let $P_i\in\RR^2$ be the marked points on the boundary of the component, and $n_i$ the outing slopes of the edges on which they lie. By tropical Menelaus theorem \cite{mikhalkin2017quantum}, we must have
$$\sum_i \det(n_i,P_i)=0.$$
As the choice of $\Xi$ is generic, this equation is not satisfied by the $P_i$. So the $\psi^kL$-constraint is on the boundary of the unique bounded component.
\end{proof}

\begin{figure}
\begin{center}
\begin{tikzpicture}[line cap=round,line join=round,>=triangle 45,x=0.6cm,y=0.6cm]
		\draw[line width=1pt,blue] (3,1)--++(2,2)++(-2,-2)--++(0,-1)++(0,1)--++(-3,0);

		\draw[line width=1pt] (0,2)--++(1,0)--++(1,-1)--++(2,0)--++(2,2);
		\draw[line width=1pt] (0,3)--++(1,0)--++(2,2);
		\draw[line width=1pt] (2,0)--++(0,1);
		\draw[line width=1pt] (4,0)--++(0,1);
		\draw[line width=1pt] (1,2)--++(0,1);
		\draw (2,1) node {$\bullet$};
		\draw (0.5,2) node {$\times$};
		\draw (1,2.5) node {$\times$};
		\draw (5,2) node {$+$};
		\draw (1.5,3.5) node {$+$};
		\end{tikzpicture}
	\caption{\label{figure example position of marked points k=1}Tropical curve contributing to $\ang{\psi L}_2$.}
\end{center}
\end{figure}
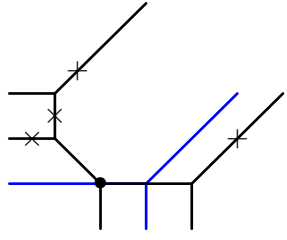

\begin{expl}
On Figure \ref{figure example position of marked points k=1} we have a degree $2$ curve passing through $4$ points with an additional line constraint. The line of the $\psi L$-constraint is drawn in blue. We can see that there is a unique bounded component in the complement of marked points.
\end{expl}

The multiplicity is given by the following proposition.

\begin{prop}\label{prop-mult1}
Let $h:\Gamma\to\RR^2$ be a solution for the tropical enumerative problem defined at the beginning of Subsection \ref{subsec-caseoneline}, let $V$ be the vertex adjacent to the $\psi^k L$-marking, let $u$ be the slope of the line $L$ at the marked point, and let $v$ be the slope of $h$ on the edge adjacent to $V$ in the unique bounded component. Then
$$m_\Gamma=|\det(u,v)|\prod_V m_V,$$
where $V$ goes over the trivalent unmarked vertices.
\end{prop}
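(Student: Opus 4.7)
The plan is to compute the multiplicity $m_\Gamma$ directly as the absolute value of the determinant of the linearized evaluation-and-projection map on the cone of $\Gamma$ in the $\psi_1^k$-subfan, and then factorize this determinant via the tree decomposition given by Lemma~\ref{lemma position points k=1}. By tropical intersection theory \cite{allermann2010first}, since the $\psi_1^k$-subfan has weight $1$ at any cone with a single $(k+2)$-valent vertex, we have $m_\Gamma = |\det\Psi|$ where
\[
\Psi: \sigma_\Gamma \longrightarrow (\RR^2)^n \oplus \RR^2/\langle u\rangle
\]
is the composition of the evaluation at the $n$ point-marked and single line-marked endpoints with the quotient $\RR^2 \to \RR^2/\langle u\rangle \cong \RR$ at the latter, and $\sigma_\Gamma$ is the cone containing $\Gamma$.

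I would parametrize $\sigma_\Gamma$ by taking $V$ as the reference vertex, so that the coordinates are the position $(x_V, y_V)\in\RR^2$ of $V$ together with the lengths $l_e$ of the bounded edges. By Lemma~\ref{lemma position points k=1}, the closure $T$ of the unique bounded component of $\Gamma\setminus\P$ is a subtree of $\Gamma$ attached to $V$ via the single edge $e_0$ of slope $v$, whose boundary consists of $V$ together with the $P$-markings lying on $T$, and whose internal vertices are all trivalent unmarked. The remaining components of $\Gamma\setminus\P$ form outward subtrees each containing exactly one unbounded end, attached to the rest of $\Gamma$ via $V$ or via some $P$-marking.

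Ordering the rows of $\Psi$ by placing the line constraint and the $T$-marking evaluations first, and then ordering the outward markings outward from $V$, a sequence of elementary row operations (subtracting the row of a marking from the row of any marking directly beyond it along a path) brings the Jacobian to block-triangular form. One obtains a central block involving $(x_V, y_V)$, the lengths inside $T$, the line constraint and the $T$-markings, together with one block per outward subtree involving only that subtree's lengths and markings. Each outward block has determinant $\prod_W m_W$ over its trivalent unmarked vertices by Mikhalkin's standard computation \cite{mikhalkin2005enumerative}. I would show by induction on the number of boundary $P$-markings of $T$ that the central block has determinant $\pm\det(u,v)\prod_{W\in T} m_W$; the base case is $T=\{e_0\}$ connecting $V$ to a single boundary $P$-marking, where direct computation of the $3\times 3$ Jacobian gives $\pm\det(u,v)$, and the induction step adjoins one more trivalent unmarked vertex and two further boundary markings, contributing exactly one extra factor of $m_W$ by the balancing condition.

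The main obstacle is the careful evaluation of the central block: capturing how the one-dimensional line condition on $V$ interacts, through the slope $v$ of $e_0$, with the internal structure of $T$. The inductive strategy outlined above sidesteps a direct general-case determinant by reducing to the single-edge base case, where the factor $|\det(u,v)|$ emerges as the coupling between the free direction of $V$ along $L$ and the slope $v$ propagating it into $T$. Multiplying the central block with the outward blocks then yields exactly $|\det(u,v)|\prod_V m_V$ with $V$ ranging over the trivalent unmarked vertices of $\Gamma$, as claimed.
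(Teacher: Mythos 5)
Your proposal is correct and follows essentially the same route as the paper: both reduce the multiplicity to the determinant of the evaluation map with the line condition handled by projecting $\RR^2\to\RR^2/\langle u\rangle$, and both factorize that determinant over the tree structure of $\Gamma\setminus\P$ from Lemma \ref{lemma position points k=1}, extracting $m_V$ at each trivalent unmarked vertex and $|\det(u,v)|$ at the $\psi^kL$-vertex. The only difference is bookkeeping — the paper iteratively prunes vertices adjacent to two fixed ends while you block-triangularize and run an induction on the bounded component — which amounts to the same computation.
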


\begin{proof}
Let $h:\Gamma\to\RR^2$ be a solution to the enumerative problem. By definition, the multiplicity is the intersection index between the image of the evaluation map and $\Xi$ at $(h,\Gamma)$. As $\Xi=\{P_1\}\times\cdots\times\{P_n\}\times L$, near $(h,\Gamma)$, $\Xi$ is just a (usual) line directed by the vector $(0,\cdots,0,u)\in(\RR^2)^n\times\RR^2$. Projecting along this direction, the intersection index is the determinant of the evaluation map
	$$\mathrm{ev}_\Gamma:\RR^{|E|}\to(\RR^2)^n\times\RR.$$
	We compute this determinant, proceeding as follows:
		\begin{itemize}[label=$\ast$]
		\item We split $\Gamma$ at the $P$-constraints and replace the $P$-constraint by a constraint on each appearing end. This does not change the determinant as moving points on a tropical plane curve leaves the multiplicity invariant, see e.g.\ \cite{gathmann2007numbers}.
		\item For each trivalent vertex adjacent to two ends whose position is fixed, we prune the vertex, which brings a factor of $m_V=|\det(a_V,b_V)|$, where $a_V$ and $b_V$ are the slopes of the fixed ends. In the evaluation matrix, such a trivalent vertex corresponds to a block of size $2\times 2$ having the determinant above, and as the determinant of a block matrix is the product of the determinants of the blocks, we can take out this factor and continue.
		\end{itemize}
		These two steps are enough to compute the multiplicity in the absence of $\psi^k L$-constraint, as every unbounded component gets completely pruned through the process. Due to the presence of the $\psi^k L$-constraint, we have two new problems: what happens for the bounded component, and what happens for the unbounded components adjacent to the $\psi^k L$-marking ? These are handled by the following new step.
		\begin{itemize}[label=$\ast$]
		\item If the $\psi^k L$-marking is adjacent to an end of slope $v$ whose position is fixed, we can prune the end, replacing every other adjacent edge by an unbounded end. This brings a factor of $|\det(v,u)|$ to the determinant, as can be checked by setting up the local evaluation matrix.
		\end{itemize}				
In the end, we get $|\det(v,u)|\prod m_V$.
\end{proof}

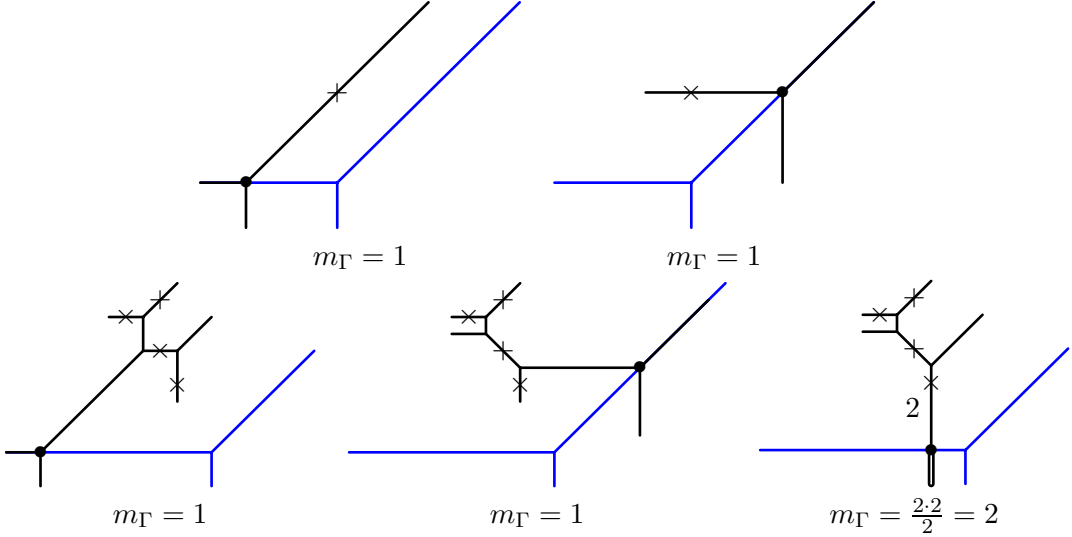
\begin{figure}
\begin{center}
\begin{tabular}{cc}
	\begin{tikzpicture}[line cap=round,line join=round,>=triangle 45,x=0.6cm,y=0.6cm]
		\draw[line width=1pt,blue] (3,1)--++(4,4)++(-4,-4)--++(0,-1)++(0,1)--++(-3,0);

		\draw[line width=1pt] (0,1)--++(1,0) node {$\bullet$} --++(0,-1)++(0,1)--++(4,4);
		\draw (3,3) node {$+$};
		\end{tikzpicture} & \begin{tikzpicture}[line cap=round,line join=round,>=triangle 45,x=0.6cm,y=0.6cm]
		\draw[line width=1pt,blue] (3,1)--++(4,4)++(-4,-4)--++(0,-1)++(0,1)--++(-3,0);

		\draw[line width=1pt] (2,3)--(5,3) node {$\bullet$} --++(0,-2)++(0,2)--++(2,2);
		\draw (3,3) node {$\times$};
		\end{tikzpicture} \\
		$m_\Gamma=1$ & $m_\Gamma=1$ \\
\end{tabular}

\begin{tabular}{ccc}
\begin{tikzpicture}[line cap=round,line join=round,>=triangle 45,x=0.45cm,y=0.45cm]
		\draw[line width=1pt,blue] (0,1)--++(6,0)++(0,-1)--++(0,1)--++(3,3);
		
		\draw[line width=1pt] (0,1)--++(1,0) node {$\bullet$} ++(0,-1)--++(0,1)--++(3,3)--++(1,0)--++(0,-1.5)++(0,1.5)--++(1,1);
		\draw[line width=1pt] (3,5)--++(1,0)--++(1,1)++(-1,-1)--++(0,-1);
		\draw (5,3) node {$\times$};
		\draw (4.5,4) node {$\times$};
		\draw (3.5,5) node {$\times$};
		\draw (4.5,5.5) node {$+$};
		\end{tikzpicture} & \begin{tikzpicture}[line cap=round,line join=round,>=triangle 45,x=0.45cm,y=0.45cm]
		\draw[line width=1pt,blue] (0,1)--++(6,0)++(0,-1)--++(0,1)--++(5,5);
		
		\draw[line width=1pt] (3,4.5)--++(1,0)--++(1,-1)--++(0,-1)++(0,1)--(8.5,3.5) node {$\bullet$} --++(2,2)++(-2,-2)--++(0,-2);
		\draw[line width=1pt] (3,5)--++(1,0)--++(1,1)++(-1,-1)--++(0,-0.5);
		\draw (5,3) node {$\times$};
		\draw (4.5,4) node {$+$};
		\draw (3.5,5) node {$\times$};
		\draw (4.5,5.5) node {$+$};
		\end{tikzpicture} & \begin{tikzpicture}[line cap=round,line join=round,>=triangle 45,x=0.45cm,y=0.45cm]
		\draw[line width=1pt,blue] (0,1)--++(6,0)++(0,-1)--++(0,1)--++(3,3);
		\draw[line width=1pt,double] (5,1)--++(0,-1);		
		\draw[line width=1pt] (3,4.5)--++(1,0)--++(1,-1)--++(1.5,1.5)++(-1.5,-1.5)--(5,1) node[midway,left] {$2$} node {$\bullet$};

		\draw[line width=1pt] (3,5)--++(1,0)--++(1,1)++(-1,-1)--++(0,-0.5);
		\draw (5,3) node {$\times$};
		\draw (4.5,4) node {$+$};
		\draw (3.5,5) node {$\times$};
		\draw (4.5,5.5) node {$+$};
		\end{tikzpicture} \\
		$m_\Gamma=1$ & $m_\Gamma=1$ & $m_\Gamma=\frac{2\cdot 2}{2}=2$ \\
\end{tabular}
\caption{\label{figure example computation r=1}In the first row, the two tropical lines contributing to $\ang{\psi L}_1$, in the second row the three conics contributing to $\ang{\psi L}_2$, see Example \ref{ex-Psiline}.}
\end{center}
\end{figure}

\begin{expl}\label{ex-Psiline}
In figure \ref{figure example computation r=1} we can see the curves contributing to the invariants $\ang{\psi L}_1$ and $\ang{\psi L}_2$. The lines in the first row both have multiplicity $1$ since there are no unmarked trivalent vertex, and in each case the determinant between the slope of the line $L$ (in blue) and the edge toward the bounded component is $1$. We get $\ang{\psi L}_1=2$.

For the conics, similarly, the first two conics have multiplicity $1$. The third conic on the contrary has multiplicity $2$: there is a vertex of multiplicity $2$, the determinant between the blue line and the bounded component is also $2$, but we have to account for the automorphism that exchanges the two bottom ends: $\frac{2\cdot 2}{2}=2$. we get $\ang{\psi L}_2=4$.
\end{expl}

We finish the case of a unique $\psi^k L$-constraint by stating the divisor equation, which allows to get rid of $\psi^0 L$-constraints.

\begin{lem}\label{lem-divisoreq}
We have the following identity:
$$\ang{L,\psi^k L}_d=d\ang{\psi^k L}_d+\ang{\psi^{k-1}P}_d.$$
\end{lem}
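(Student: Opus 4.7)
\emph{Proof proposal.} The plan is to argue via forgetting the marked end that carries the simple line $L$: any solution of $\ang{L,\psi^k L}_d$ will project to either a solution of $\ang{\psi^k L}_d$ or one of $\ang{\psi^{k-1}P}_d$. Let $h:\Gamma\to\RR^2$ be a solution, with the $L$-marking attached to vertex $V_A\in\Gamma$ and the $\psi^k L$-marking attached to $V_B\in\Gamma$; write $L'$ for the tropical line carrying the $\psi^k L$-constraint. Genericity of the constraint data leaves only two combinatorial types: either $V_A\neq V_B$, in which case $V_A$ is trivalent and $h(V_A)$ lies in the interior of an edge $e$ of $h(\Gamma)$ meeting $L$; or $V_A=V_B$ is a single vertex $V$ of valency $k+3$ in $\Gamma$ carrying both marked ends, at which $h(V)\in L\cap L'$.

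In the first case, forgetting the $L$-marking and stabilizing gives a solution of $\ang{\psi^k L}_d$ with the same other data, and conversely each solution of $\ang{\psi^k L}_d$ can be enriched by inserting back the $L$-marking at any edge crossing with $L$. Following the same steps as in the proof of Proposition~\ref{prop-mult1} for one extra simple line marking, the multiplicity of the enriched curve equals the multiplicity of the original times the local intersection factor $|\det(u_L,v_e)|$, where $v_e$ is the weighted slope of the edge $e$. Summing over edges of $h(\Gamma)$ crossing $L$ reconstructs the stable tropical intersection $[h(\Gamma)]\cdot[L]=d$, so this case contributes $d\cdot\ang{\psi^k L}_d$.

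In the second case, the generic tropical lines $L$ and $L'$ meet at a single point $Q$ with stable intersection multiplicity $1$ (indeed $|\det(u_L,u_{L'})|=1$ for the primitive rays at $Q$), so $h(V)=Q$ is pinned. Forgetting the $L$-marking drops the valency of $V$ from $k+3$ to $k+2$, converting the $\psi^k L$-condition at $V$ into a $\psi^{k-1}P$-condition at the generic point $Q$, while the restricted evaluation $(\mathrm{ev}_A,\mathrm{ev}_B)$ factors through the diagonal of $\RR^2\times\RR^2$, so intersecting with $L\times L'$ only contributes the factor $1$ just mentioned. The combinatorial $\psi$-weight $(k_1+k_2)!/(k_1!\,k_2!)$ at the shared vertex equals $1$ because $k_2=0$, and the remaining multiplicity $\prod m_V$ over unmarked trivalent vertices is unchanged by the forgetful map; hence this case contributes $\ang{\psi^{k-1}P}_d$, and summing the two cases gives the identity.

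The main technical obstacle is the multiplicity matching in the second case: one has to check that the $\psi$-weight, the diagonal intersection $L\cdot L'$, and the pruning of the two coincident marked ends each contribute the factor~$1$. Once these three elementary facts are verified, the equality of raw counts is immediate.
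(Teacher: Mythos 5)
Your proposal is correct and follows essentially the same route as the paper: the paper's much terser proof also splits according to whether the extra $L$-marking is adjacent to the $\psi^k L$-vertex or not, the non-adjacent case giving $d\ang{\psi^k L}_d$ via the $d$ stable intersection points with $L$ and the adjacent case giving $\ang{\psi^{k-1}P}_d$. Your multiplicity bookkeeping (the factor $d$ from $[h(\Gamma)]\cdot[L]$, and the factors $1$ from the combinatorial $\psi$-weight and from $L\cdot L'$ at the shared vertex) correctly fills in details the paper leaves implicit.
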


\begin{proof}
On the left, we count curves of degree $d$ passing through $3d-1-k$ points and subject to a $\psi^k L$-constraint. However, there is an additional marked point coming from the intersection with a line. This marked point can be adjacent or not to the vertex adjacent to the $\psi^k L$-point, leading to the two terms on the right-hand side.
\end{proof}

\begin{rem}
For $k=0$, we recover the identity $\ang{L,L}_d=d\ang{L}_d=d^2 N_d$.
\end{rem}

\subsubsection{Case with two lines.} \label{subsec-casetwolines} Assume there are exactly two line conditions $L$ and $L'$, coupled with a $\psi^{k_1}L$- resp.\ $\psi^{k_2}L$-constraint. Let $n$ be the number of $P$-constraints. By dimension count, we have $n+k_1+k_2=3d-1$. Contrarily to case with a unique $\psi^k L$-constraint, and contrarily to the case with only $\psi^kP$-constraints, we can now have solutions where different markings are adjacent to the same vertex. For the invariant $\ang{\psi^{k_1}L,\psi^{k_2}L}_d$, these correspond to degree $d$ curves having a $(k_1+k_2+3)$-valent vertex at the intersection point between the line constraints $L$ and $L'$. This contributes exactly $\frac{(k_1+k_2)!}{k_1!k_2!}\ang{\psi^{k_1+k_2-1}P}_d$. We now care about the solutions where the two $\psi^k L$-markings are not adjacent to the same vertex. The relative position of the marked points on a solution is then described by the following lemma.

\begin{lem}\label{lemma position points r=2}
Let $h:\Gamma\to\RR^2$ be a solution to the tropical counting problem defined at the beginning of Subsection \ref{subsec-casetwolines}. Assume that the marked point with the $\psi^{k_1}L$- resp.\ $\psi^{k_2}L$-constraint are not adjacent to the same vertex. Let $\Gamma\backslash\P$ be the complement of marked points. There are exactly two bounded components, every other component contains a unique end. Moreover:
\begin{itemize}[label=$\ast$]
\item Each bounded component has a $\psi^{k_i}L$-marking on its boundary.
\item Each of the $\psi^{k_i} L$-markings is adjacent to a bounded component.
\end{itemize}
\end{lem}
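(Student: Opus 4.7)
The plan is to mimic the strategy of the proof of Lemma \ref{lemma position points k=1}, adapting the counting and the Menelaus-based argument to the presence of two $\psi^{k_i}L$-markings. First, I would compute the number of connected components of $\Gamma \setminus \P$: the vertex of a $\psi^{k_i}L$-marking is $(k_i+3)$-valent, so its removal replaces one component by $k_i+2$, while each $P$-marking sits on a trivalent vertex and its removal adds one component. Starting from the connected graph $\Gamma$ and using the dimension relation $n+k_1+k_2=3d-1$, this gives exactly $n+k_1+k_2+3=3d+2$ components.

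Next, as in the proof of Lemma \ref{lemma position points k=1}, I would observe that no component may contain more than one end: otherwise the path between two ends could be deformed, producing a $1$-parameter family of solutions and contradicting the generic $0$-dimensionality of the intersection with $\Xi$. With the $3d$ ends distributed one per component, exactly two of the $3d+2$ components are bounded.

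For the first sub-bullet, I would apply the tropical Menelaus theorem to each bounded component $C$: if all markings on $\partial C$ were of type $P$, the resulting linear relation $\sum \det(n_i, P_i)=0$ on the boundary slopes and the fixed points $P_i$ would fail for a generic choice of $\Xi$. Hence each bounded component carries at least one $\psi^{k_i}L$-marking on its boundary.

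The second sub-bullet does not follow immediately from the first, and I expect this to be the main obstacle. The remaining case to rule out is that one of the $\psi^{k_i}L$-markings, say the $\psi^{k_1}L$-marking, is not adjacent to any bounded component. The first sub-bullet would then force both bounded components to carry only the remaining $\psi^{k_2}L$-marking, so the vertex $V_2$ supporting that marking would lie on the boundary of both. Writing the Menelaus relation for each of the two bounded components as a linear equation in the single free parameter describing $h(V_2) \in L_2$ yields two independent determinations of that parameter, which generically disagree; this excludes the scenario and forces each $\psi^{k_i}L$-marking to be adjacent to a bounded component.
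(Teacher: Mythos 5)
Your component count, the one-end-per-component argument, and the Menelaus argument for the first bullet coincide with the paper's proof of Lemma \ref{lemma position points r=2}. For the second bullet you take a genuinely different route: the paper rules out a $\psi^{k_1}L$-marking adjacent only to unbounded components by exhibiting a one-parameter deformation (sliding that marking along its line together with the paths to the unique ends of its adjacent components), whereas you observe that both bounded components would then hang off the single vertex $V_2$ carrying the other marking, so that the two Menelaus relations overdetermine the one-parameter position $h(V_2)$ on its line. These are dual manifestations of the same rank deficiency of the evaluation map on the offending cone: the paper exhibits a kernel vector, you exhibit that the image misses a generic constraint. Your version is correct, but two points are left implicit. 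First, one should check that the two Menelaus relations really are independent conditions on the generic configuration, e.g.\ when the two edges from $V_2$ into the bounded components have parallel slopes, or when a single $P$-marking lies on the boundary of both bounded components; these are routine but nonempty verifications, and one should also discard the degenerate case of a bounded component with no $P$-marking at all on its boundary (a contracted subtree), which is excluded by genericity just as in the paper. Second, the paper's deformation argument localizes the contradiction at the marking that violates the conclusion and is reused verbatim for the general case of $r$ line constraints, where a marking adjacent to two bounded components is genuinely possible (see Lemma \ref{lem orientation bipartite graph} and the surrounding discussion) and your pigeonhole step no longer closes the argument by itself; so the deformation phrasing is the one that scales beyond $r=2$.
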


\begin{proof}
By the genericity assumption on the constraints, the curve is trivalent out of the $\psi^{k_i} L$-markings, and the complement of the marked points has thus exactly
$$1+n+(k_1+1)+(k_2+1)=3d+2,$$
connected components. As a connected component cannot contain more than two ends, otherwise we would have a $1$-parameter family of solutions, $3d$ of them are unbounded, and two of them are bounded.

As in the case of a unique $\psi^kL$-constraint, by genericity, a bounded component cannot be adjacent to only $P$-constraints, so that a bounded component has at least one $\psi^{k_i} L$-constraint on its boundary.

If a $\psi^{k_i}L$-constraint was adjacent only to unbounded components, it would be possible to deform by moving the $\psi^{k_i} L$-marking along $L$ (or $L'$), and simultaneously the paths from the $\psi^{k_i}L$-marking to the end in each of the adjacent components, yielding a $1$-parameter family of solutions, which is impossible by genericity.
\end{proof}

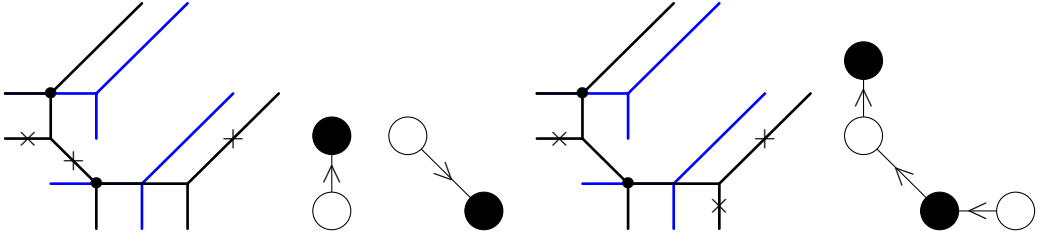
\begin{figure}
\begin{center}
\begin{tabular}{cccc}
\begin{tikzpicture}[line cap=round,line join=round,>=triangle 45,x=0.6cm,y=0.6cm]
		\draw[line width=1pt,blue] (3,1)--++(2,2)++(-2,-2)--++(0,-1)++(0,1)--++(-2,0);
		\draw[line width=1pt,blue] (2,3)--++(2,2)++(-2,-2)--++(0,-1)++(0,1)--++(-2,0);

		\draw[line width=1pt] (0,2)--++(1,0)--++(1,-1)--++(2,0)--++(2,2);
		\draw[line width=1pt] (0,3)--++(1,0)--++(2,2);
		\draw[line width=1pt] (2,0)--++(0,1);
		\draw[line width=1pt] (4,0)--++(0,1);
		\draw[line width=1pt] (1,2)--++(0,1);
		\draw (2,1) node {$\bullet$};
		\draw (1,3) node {$\bullet$};
		\draw (0.5,2) node {$\times$};
		\draw (1.5,1.5) node {$+$};
		\draw (5,2) node {$+$};
		\end{tikzpicture} & \begin{tikzpicture}[line cap=round,line join=round,>=triangle 45,x=1cm,y=1cm]
\white (W1) at (0,0) {};
\black (B1) at (0,1) {};
\white (W2) at (1,1) {};
\black (B2) at (2,0) {};
\draw (W1) to node[midway,sloped] {$>$} (B1);
\draw (W2) to node[midway,sloped] {$>$} (B2);
\end{tikzpicture} & \begin{tikzpicture}[line cap=round,line join=round,>=triangle 45,x=0.6cm,y=0.6cm]
		\draw[line width=1pt,blue] (3,1)--++(2,2)++(-2,-2)--++(0,-1)++(0,1)--++(-2,0);
		\draw[line width=1pt,blue] (2,3)--++(2,2)++(-2,-2)--++(0,-1)++(0,1)--++(-2,0);

		\draw[line width=1pt] (0,2)--++(1,0)--++(1,-1)--++(2,0)--++(2,2);
		\draw[line width=1pt] (0,3)--++(1,0)--++(2,2);
		\draw[line width=1pt] (2,0)--++(0,1);
		\draw[line width=1pt] (4,0)--++(0,1);
		\draw[line width=1pt] (1,2)--++(0,1);
		\draw (2,1) node {$\bullet$};
		\draw (1,3) node {$\bullet$};
		\draw (0.5,2) node {$\times$};
		\draw (4,0.5) node {$\times$};
		\draw (5,2) node {$+$};
		\end{tikzpicture} & \begin{tikzpicture}[line cap=round,line join=round,>=triangle 45,x=1cm,y=1cm]
\white (W1) at (2,0) {};
\black (B1) at (1,0) {};
\white (W2) at (0,1) {};
\black (B2) at (0,2) {};
\draw (W1) to node[midway,sloped] {$<$} (B1) to node[midway,sloped] {$<$} (W2) to node[midway,sloped] {$>$} (B2);
\end{tikzpicture} \\
\end{tabular}
\caption{\label{figure example position of marked points r=2}Two conics contributing to $\ang{\psi L,\psi L}_2$ for different choices of constraints. For the first one, each $\psi L$-marking is adjacent to a unique bounded component. For the second one, one of the $\psi L$-markings is adjacent to the two bounded components. On the right of each curve is depicted the graph $\B(\Gamma)$ along with the orientation given by Lemma \ref{lem orientation bipartite graph}. White vertices are the bounded components, black vertices are the $\psi^k L$-markings.}
\end{center}
\end{figure}

\begin{expl}
In Figure \ref{figure example position of marked points r=2} we see two conics subject to three $P$-constraints and two $\psi L$-constraints. We can see that there is indeed two bounded components in the complement of marked points. Each $\psi L$-marking is adjacent to at least one of them, and each of them has at least one on its boundary.
\end{expl}

The multiplicity is given by the following proposition. Its proof is similar to the case of a unique $\psi^k L$-constraint, except there are now two bounded components. We have two possibilities:
	\begin{itemize}[label=-]
	\item Each $\psi^kL$-marking is adjacent to a unique bounded component.
	\item One of the $\psi^k L$-marking is adjacent to the two bounded components.
	\end{itemize}
	In each case it is possible to assign a bounded component to each $\psi^k L$-marking: in the first case, it is the unique adjacent one. In the second case, one $\psi^k L$-marking is adjacent to a unique bounded component, which we assign to it, and the other $\psi^k L$-marking is thus assigned the other bounded component.

\begin{prop}\label{prop-mult2}
Let $h:\Gamma\to\RR^2$ be a solution to the tropical counting problem defined at the beginning of Subsection \ref{subsec-casetwolines}. Assume that the marked point with the $\psi^{k_1}L$- resp.\ $\psi^{k_2}L$-constraint are not adjacent to the same vertex. Le $V_i$ be the vertex adjacent to the $\psi^{k_i} L$-marking, let $u_i$  be the slope of $L$ (resp. $L'$) at $V_i$, and let $v_i$  be the slope of $h$ on the edge adjacent to $V_i$  going into the assigned bounded component. The multiplicity is
$$m_\Gamma=|\det(u_1,v_1)||\det(u_2,v_2)|\prod_V m_V.$$
\end{prop}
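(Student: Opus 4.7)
The plan is to adapt the proof of Proposition \ref{prop-mult1} to the two-line setting. As before, I would express $m_\Gamma$ as the absolute value of the determinant of the linear evaluation map
$$
\mathrm{ev}_\Gamma:\RR^{|E|}\longrightarrow (\RR^2)^n\times\RR\times\RR,
$$
obtained by projecting the last two target factors along the line directions $u_1$ and $u_2$. The first two reduction steps of the single-line proof carry over unchanged: splitting $\Gamma$ at every $P$-constraint does not affect the determinant, and each unmarked trivalent vertex with two fixed adjacent ends corresponds to a $2\times 2$ block of determinant $m_V$ that can be factored out of the evaluation matrix. After these operations, the only remaining structure consists of the two bounded components $B_1,B_2$ of $\Gamma\setminus\P$ together with the two $\psi^{k_i}L$-markings $V_1,V_2$, and the factored contributions multiply to $\prod_V m_V$ ranging over the trivalent unmarked vertices.

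The remaining analysis splits along the two sub-cases of Lemma \ref{lemma position points r=2}. In the easier case, where each $V_i$ is adjacent to a unique bounded component $B_i$, the remaining matrix is block diagonal with one block per pair $(V_i,B_i)$. Each block is precisely the one handled in the proof of Proposition \ref{prop-mult1}, producing a factor $|\det(u_i,v_i)|$, and the claimed formula follows at once.

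The main obstacle is the case where one marking, say $V_2$, is adjacent to both bounded components, since the contributions at $V_1$ and $V_2$ are then coupled. I would follow the assignment fixed before the statement, namely $V_1\mapsto B_1$ (its unique adjacent bounded component) and $V_2\mapsto B_2$. After the previous reductions, $B_1$ collapses to a single edge from $V_1$ to $V_2$ of slope $v_1$, while $B_2$ collapses to a single edge from a fixed point into $V_2$ of slope $v_2$. Performing the special pruning of Proposition \ref{prop-mult1} at $V_1$ then factors out $|\det(u_1,v_1)|$ and converts the $B_1$-edge into an unbounded end at $V_2$ with free position; at this stage $V_2$ is in exactly the single-$\psi^{k}L$-constraint situation of Proposition \ref{prop-mult1} relative to $B_2$, so a final special pruning extracts $|\det(u_2,v_2)|$.

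The technical point to verify carefully is that in this second case the prescribed order of operations corresponds to a genuine block-triangular decomposition of the evaluation matrix, i.e.\ that no off-diagonal block from $V_2$ spoils the factoring performed at $V_1$. This is exactly what the orientation of the bipartite graph $\B(\Gamma)$ depicted in Figure \ref{figure example position of marked points r=2} records: processing each marking only after its assigned bounded component has been collapsed matches the triangular form induced by that orientation, and the three factors then multiply to give the claimed multiplicity.
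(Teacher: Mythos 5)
Your overall strategy coincides with the paper's: both compute $m_\Gamma$ as the determinant of the evaluation map, split $\Gamma$ at the $P$-markings, factor out $m_V$ at each unmarked trivalent vertex with two fixed adjacent ends, and then distinguish the two sub-cases of Lemma \ref{lemma position points r=2}, with the uncoupled case giving a block-diagonal matrix exactly as you describe. Where you genuinely diverge is the order of elimination in the coupled case. The paper processes the markings following the orientation of $\B(\Gamma)$ from its source: the white leaf $B_2$ collapses to a fixed end at $V_2$, then $V_2$ is pruned by the literal operation of Proposition \ref{prop-mult1} (the constraints $V_2\in L'$ and the fixed end determine the position of $V_2$ with Jacobian $\pm\det(u_2,v_2)$), after which the $B_1$-path terminates at a fixed point and $V_1$ is pruned the same way. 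You prune $V_1$ first; but at that stage $V_1$ is adjacent to no fixed end, since the $B_1$-path ends at the still-free vertex $V_2$, so the ``special pruning of Proposition \ref{prop-mult1}'' does not apply as stated --- its precondition fails. Your step can nonetheless be rescued, and then your order is just the same triangular elimination run from the other end: the length of the $B_1$-path occurs in exactly one constraint row, namely $V_1\in L$ (everything on the far side of $V_1$ is unbounded and unconstrained), so cofactor expansion along that column extracts $\pm\det(u_1,v_1)$ and deletes precisely that row and column, leaving the single-line evaluation matrix at $V_2$. It is this single-nonzero-entry observation, rather than the orientation of $\B(\Gamma)$ as such (whose source-to-sink order actually suggests the paper's sequence $V_2$ then $V_1$), that makes your block-triangular claim correct; you should state it explicitly, since as written the first pruning at $V_1$ is performed outside the hypotheses of the operation you invoke.
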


\begin{proof}
As in the case of a unique $\psi^k L$-constraint, the multiplicity can be computed as the determinant of the evaluation map, and we use the toolbox with the following steps:
	\begin{itemize}[label=$\ast$]
	\item We remove the $P$-constraints by replacing each $P$-marking with a pair of ends whose position is fixed.
	\item For each trivalent vertex adjacent to two ends with fixed position, we prune the curve by removing the vertex, which yields a contribution of $m_V$.
	\item During the previous step, each bounded component adjacent to a unique $\psi^{k_i} L$-marking gets pruned. We then prune the vertex and replace every other adjacent edge by an end with fixed position, which yields a contribution of $|\det(u_i,v_i)|$.
	\end{itemize}
	The previous steps finish the computation of the determinant. The last step is applied twice, once for each of the $\psi^{k_i} L$-markings. Its applications is simultaneous for both $\psi^{k_i} L$-markings if each of them is adjacent to a unique bounded component. In the other case, it gets applied once to prune of the the $\psi^{k_1} L$-constraint, then the other bounded component gets pruned, and a second application deletes the $\psi^{k_2} L$-marking. In the end, we get the announced multiplicity.
\end{proof}

\begin{expl}
Each of the conics in Figure \ref{figure example position of marked points r=2} has multiplicity $1$.
\end{expl}

\subsubsection{General case.} We now consider an arbitrary number $r$ of $\psi^k L$ constraints, and $n$ $P$-constraints, so that we have
$$n+\sum_1^r k_i=3d-1.$$
As before, we have the solutions where two among the $\psi^k L$-constraints are realized at the same vertex, yielding a contribution of
$$\sum_{i<j}\frac{(k_i+k_j)!}{k_i!k_j!}\ang{\psi^{k_1}L,\cdots,\widehat{\psi^{k_i}L},\cdots,\widehat{\psi^{k_j}L},\cdots,\psi^{k_r}L,\psi^{k_i+k_j-1}P}_d,$$
to $\ang{\psi^{k_1}L,\cdots,\psi^{k_r}L}_d$. By induction, we can assume that the $\psi^{k_i} L$-markings are adjacent to different vertices (the addition of $\psi^k P$-constraints is no problem for the setting but just involves more notation). Let $h:\Gamma\to\RR^2$ be such a solution. We can form a bipartite graph $\B(\Gamma)$ out of $\Gamma$: the white vertices are the bounded components of $\Gamma\backslash\P$, and the black vertices are the $r$ $\psi^{k_i} L$-markings. Two vertices are linked by an edge if the corresponding $\psi^{k_i} L$-marking is adjacent to the corresponding bounded component. The following lemma describes the relative position of the marked points on a solution $h:\Gamma\to\RR^2$ in terms of $\B(\Gamma)$.

\begin{lem}
Let $h:\Gamma\to\RR^2$ be a solution to the tropical enumeratve problem described above with the $\psi^{k_i} L$-markings adjacent to different vertices. We have the following:
\begin{enumerate}[label=(\roman*)]
\item There are exactly $3d$ unbounded components in $\Gamma\backslash\P$, and $r$ bounded components.
\item Each unbounded connected component of $\Gamma\backslash\P$ contains exactly one unbounded end.
\item Every vertex of $\B(\Gamma)$ has valency at least one. 
\end{enumerate}
\end{lem}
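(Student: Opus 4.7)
The plan is to imitate the proofs of Lemmas \ref{lemma position points k=1} and \ref{lemma position points r=2}, combining a connected-component count with the same two genericity arguments used there. The proof reduces to a bookkeeping step plus two invocations of deformation/Menelaus arguments already in place for the $r \leq 2$ cases.

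For (i), I would count the connected components of $\Gamma \setminus \P$ directly. Under the standing assumption that the $\psi^{k_i}L$-markings sit at distinct vertices, removing the $i$-th such marking (a vertex of valency $k_i+2$) replaces one component by $k_i+2$ components, so contributes a net gain of $k_i+1$; each $P$-marking lies in the interior of an edge and increases the component count by one. Starting from $\Gamma$ (one component), the total is
\[
1 + \sum_{i=1}^r (k_i+1) + n \;=\; 1 + n + \sum_i k_i + r \;=\; 3d + r,
\]
using $n + \sum_i k_i = 3d - 1$.

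For (ii), I would argue by genericity that no component of $\Gamma \setminus \P$ can contain two distinct unbounded ends: if it did, the path between them could be stretched continuously while keeping every marking constraint satisfied, producing a one-parameter family of solutions and contradicting the fact that $\Xi$ has been chosen generically of complementary dimension. Since each of the $3d$ non-contracted ends must lie in some component and each component may carry at most one, exactly $3d$ of the $3d+r$ components are unbounded (each with a single end), leaving $r$ bounded components as asserted in (i).

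Finally, for (iii), I would check that neither color of vertex of $\B(\Gamma)$ can be isolated. A white vertex of valency zero would correspond to a bounded component whose boundary consists only of $P$-markings, so the tropical Menelaus relation $\sum_i \det(n_i, P_i)=0$ would force a nontrivial linear equation on the configuration $(P_i)$ and is ruled out by genericity, exactly as in Lemma \ref{lemma position points k=1}. A black vertex of valency zero would correspond to a $\psi^{k_j}L$-marking all of whose adjacent components are unbounded; one then deforms the marking along $L_j$ and simultaneously slides each path to the unique unbounded end of each adjacent component, producing a one-parameter family. The most delicate point is checking that this latter deformation is actually unobstructed: this is where the assumption that the $\psi^{k_i}L$-markings sit at distinct vertices is used, since it guarantees that the neighboring unbounded components are constrained only by their fixed end slopes, leaving exactly one free parameter for the slide and hence the required contradiction with the $0$-dimensionality of the solution set.
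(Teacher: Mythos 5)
Your proof is correct and follows essentially the same route as the paper's: the same component count $1+n+\sum_i(k_i+1)=3d+r$, the same string-deformation argument ruling out two ends in one component, the tropical Menelaus/genericity argument for white vertices, and the slide-along-$L$ deformation for black vertices. The only difference is cosmetic---your closing remark about where the distinct-vertices hypothesis enters is a reasonable gloss but not something the paper makes explicit.
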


\begin{proof}
By genericity of the constraints, any solution is trivalent out of the $\psi^{k_i} L$-markings, which are of known valency in $\Gamma$. The complement of marked points has thus exactly
$$1+n+\sum_1^r(k_i+1)=3d+r$$
connected components. No connected component can contain more than one unbounded end, otherwise it would be possible to deform the string linking one to the other. Thus, there are $3d$ bounded components since there are $3d$ ends in $\Gamma$. Hence, there are $r$ bounded components.

A white vertex of $\B(\Gamma)$ corresponds to a bounded component of $\Gamma\backslash\P$. It has at least one $\psi^{k_i} L$-constraint on its boundary, otherwise the point configuration is non-generic, as in the case of a unique $\psi^{k_i}L$-constraint.

A black vertex corresponding to a $\psi^{k_i} L$-constraint has to be adjacent to at least one bounded component, otherwise it is possible to deform the position of the $\psi^{k_i} L$-point along with the paths linking it to infinity inside each adjacent unbounded component.
\end{proof}

This description recovers the result for $r=1$ and $r=2$. For $r=1$ there is a unique bipartite graph with two vertices, and for $r=2$, there are two possible bipartite graphs.
	
	We can then put an orientation on the graph $\B(\Gamma)$ using the following lemma. Recall that as $\Gamma$ is without cycle, so is $\B(\Gamma)$.

\begin{lem}\label{lem orientation bipartite graph}
	Each vertex of $\B(\Gamma)$ is adjacent to at most one leaf (of the other color). Moreover, there exists a unique orientation on $\B(\Gamma)$ that satisfies the following:
	\begin{itemize}[label=-]
	\item Each white vertex (\textit{i.e.} bounded component) has exactly one outward edge.
	\item Each black vertex (\textit{i.e.} $\psi^{k_i} L$-point) has exactly one inward edge.
\end{itemize}
\end{lem}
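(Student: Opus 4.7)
The plan is to handle the two assertions in sequence: first the leaf-sparsity claim by a genericity-based deformation argument, and then the uniqueness and existence of the orientation by reducing it to a perfect-matching problem on the bipartite forest $\B(\Gamma)$. For the leaf-sparsity, in the spirit of Lemmas~\ref{lemma position points k=1} and~\ref{lemma position points r=2}, I would argue by contradiction. Suppose a white vertex $w$ corresponding to a bounded component $C$ of $\Gamma\backslash\P$ is adjacent to two black leaves $b_1$ and $b_2$. By definition each $b_i$ is a $\psi^{k_i}L_i$-marking whose only bounded-component neighbor is $C$, so every other component adjacent to $b_i$ is unbounded and, by the preceding lemma, contains a unique unbounded end. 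I would then build a nontrivial one-parameter family of parametrized tropical curves still satisfying $\Xi$: slide $b_1$ along $L_1$, translate each string leaving $b_1$ through an unbounded component along its end direction to absorb the local shift at the $P$-markings, translate $C$ by a compensating amount, and then compensate on the far side by simultaneously sliding $b_2$ along $L_2$ with the induced motions of the strings on its side. A Menelaus-type linear check, along the lines of the proof of Lemma~\ref{lemma position points k=1}, shows that the compatibility system admits a nontrivial solution generically in the $P$-marking positions, contradicting the $0$-dimensionality of the intersection with the generic cycle $\Xi$. A symmetric argument---with the roles of bounded components and $\psi^{k_i}L$-markings exchanged---rules out a black vertex adjacent to two white leaves.

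The two conditions on the orientation are equivalent to the statement that the edges oriented from white to black form a perfect matching of $\B(\Gamma)$, the remaining edges being forced into the reverse orientation. Hence the second assertion reduces to showing that $\B(\Gamma)$ admits a unique perfect matching. The graph $\B(\Gamma)$ is a forest because any cycle in it would lift, via interior paths of the bounded components, to a cycle in the tree $\Gamma$. Uniqueness of the perfect matching is then immediate: the symmetric difference of two perfect matchings is a disjoint union of even cycles, and a forest contains no cycle, so the two matchings must coincide. For existence I would proceed by induction on $r$, peeling a leaf $\ell$ of $\B(\Gamma)$ together with its unique neighbor $v$ at each step; the leaf-sparsity step ensures that $\ell$ is the only leaf of $\B(\Gamma)$ adjacent to $v$, so the remainder $\B(\Gamma)\setminus\{\ell,v\}$ still has every vertex of degree at least one. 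Reinterpreting this subforest as the $\B(\Gamma')$ attached to a reduced tropical configuration with parameter $r-1$ (obtained by absorbing the string from $\ell$ into a modified tropical datum), the inductive hypothesis furnishes a perfect matching of the remainder, and combining it with $\{\ell,v\}$ yields the desired perfect matching of $\B(\Gamma)$.

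The technical heart of the argument is the construction of the explicit one-parameter deformation in the leaf-sparsity step: one must check that the tropical balancing conditions, together with the $P$-markings, the slopes of the $L_i$ at the $b_i$, and the fixed unbounded-end directions, leave a nontrivial kernel in the evaluation map. This is precisely where the genericity of $\Xi$ enters crucially. Once this is secured the remainder of the proof is routine: the orientation-to-matching reformulation is purely combinatorial, uniqueness is automatic for forests, and existence follows from the standard peeling procedure on a bipartite forest.
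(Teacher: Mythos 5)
The genuine gap is in the first assertion, in the case of a black vertex adjacent to two white leaves. You dispose of it with ``a symmetric argument, with the roles of bounded components and $\psi^{k_i}L$-markings exchanged'', but the two cases are not symmetric and the deformation argument does not transfer. When a white vertex $C$ is adjacent to two black leaves $b_1,b_2$, the contradiction is indeed an excess of solutions: sliding $b_1$ and $b_2$ along their lines subject to the single Menelaus relation of $C$ produces a one-parameter family (though note you cannot ``translate $C$ by a compensating amount''---$C$ is pinned by the $P$-markings on its boundary; what deforms is the path from $b_1$ to $b_2$ through $C$). When a black vertex $b$ is adjacent to two white leaves $C_1,C_2$, there is nothing to deform: each $C_i$ is rigidified by its own $P$-markings, and its Menelaus relation forces $h(b)$ onto a fixed affine line of the form $\det(m_i,\cdot)=c_i$, where $m_i$ is the slope of the edge joining $b$ to $C_i$. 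Together with the condition $h(b)\in L$ this gives three conditions on a point of $\RR^2$, so for generic constraints no such curve exists at all. The contradiction is over-determination (an empty solution set), not under-determination (a positive-dimensional one), and the kernel of the evaluation map you propose to exhibit would come out trivial rather than nontrivial. This half of the leaf-sparsity claim therefore needs the separate argument the paper gives.

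The rest of your proposal is sound and in one place cleaner than the paper: recasting the two orientation conditions as a perfect matching of the bipartite forest $\B(\Gamma)$ is a correct equivalence, and the symmetric-difference argument makes uniqueness immediate, whereas the paper only obtains uniqueness implicitly from the forced moves in its peeling induction. Your existence argument (peel a leaf and its unique neighbour, use leaf-sparsity to keep all remaining degrees positive, recurse) is essentially the paper's induction; both leave implicit that the reduced graph is again of the required form, and your appeal to a ``reduced tropical configuration with parameter $r-1$'' is no more justified, but also no less, than the paper's ``proceed by induction''.
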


\begin{proof}
Assume there is a white vertex adjacent to two black leaves. Then there is a string relating the two corresponding $\psi^{k_i} L$-markings through the bounded component. Moreover, every other connected component adjacent to one of the $\psi^{k_i} L$-markings has a unique end. Thus, it is possible to deform the strings by moving the $\psi^{k_i} L$-constraints along their line, getting a $1$-parameter family of solutions, which is impossible.

Assume there is a black vertex adjacent to two white leaves. Then line constraint of the $\psi^{k_i} L$-marking and the two adjacent bounded edges corresponding to the two leaves meet at a common point. However, this cannot happen if the constraints are chosen generically as it gives a relation on the coordinate of the $P$-markings adjacent to the bounded components.

To put an orientation, we proceed by induction on the number of vertices. If $\B(\Gamma)$ has no vertices, the result is true. If $\B(\Gamma)$ is not empty, the set of leaves is non-empty either, as $\B(\Gamma)$ is without cycle. 
\begin{itemize}[label=$\ast$]
\item Consider a black leaf. The unique adjacent edge has to be oriented toward the black vertex. Moreover, by the first part of the proposition, it is the unique leaf adjacent to the white neighbour. Thus, every other edge adjacent to the white neighbour has to be oriented toward the white vertex.
\item For a white leaf, just switch black and white, and the orientation of the edges.
\item As the edges adjacent to the white (resp. black) neighbour of a black (resp. white) leaf are oriented inward (resp. outward), they are not of the right orientation to be the unique inward (resp. outward) edge of their other black (resp. white) neighbours. So we can remove the leaves and their only neighbour and proceed by induction.
\end{itemize}
\end{proof}

\begin{figure}
\begin{center}
\begin{tabular}{ccc}
\begin{tikzpicture}[line cap=round,line join=round,>=triangle 45,x=0.9cm,y=0.9cm]
		\draw[line width=1pt,blue] (6,3)--++(-0.5,0)++(0.5,0)--++(0,-0.5)++(0,0.5)--++(2,2);
		
		\draw[line width=1pt] (3,4.5)--++(1,0)--++(1,-1)--++(0,-1)++(0,1)--(6.5,3.5) node {$\bullet$} --++(2,2)++(-2,-2)--++(0,-1);
		\draw[line width=1pt] (3,5)--++(1,0)--++(1,1)++(-1,-1)--++(0,-0.5);
		\draw (5,3) node {$\times$};
		\draw (4.5,4) node {$+$};
		\draw (3.5,5) node {$\times$};
		\draw (4.5,5.5) node {$+$};
		\end{tikzpicture} & \begin{tikzpicture}[line cap=round,line join=round,>=triangle 45,x=1cm,y=1cm]
		\draw[line width=1pt] (1,2)--++(0,1)--++(-1,0)++(1,0)--++(1,1)--++(0,1)--++(-1,0)++(1,0)--++(1,1)++(-1,-1)++(0,-1)--++(1,0)--++(1,1)++(-1,-1)--++(0,-2);
		\draw (1,3) node {$\bullet$};
		\draw (3,4) node {$\bullet$};
		\draw (2,5) node {$\bullet$};
		\draw (0.5,3) node {$\times$};
		\draw (3.5,4.5) node {$+$};
		\draw[line width=1pt,blue] (1,3.5)--++(0.5,0.5)++(-0.5,-0.5)--++(0,-0.5)++(0,0.5)--++(-0.5,0);
		\draw[line width=1pt,blue] (2.5,5)--++(0.5,0.5)++(-0.5,-0.5)--++(0,-0.5)++(0,0.5)--++(-0.5,0);
		\draw[line width=1pt,blue] (3.5,4)--++(0.5,0.5)++(-0.5,-0.5)--++(0,-0.5)++(0,0.5)--++(-0.5,0);
		\end{tikzpicture} & \begin{tikzpicture}[line cap=round,line join=round,>=triangle 45,x=1cm,y=1cm]
		\draw[line width=1pt] (1,0)--++(0,3)--++(-1,0)++(1,0)--++(1,1)--++(0,1)--++(-1,0)++(1,0)--++(1,1)++(-1,-1)++(0,-1)--++(1,0)--++(1,1)++(-1,-1)--++(0,-2)--++(-3,0)++(3,0)--++(1,-1)--++(0,-1)++(0,1)--++(1,0)--++(0,-1)++(0,1)--++(1,1);
		\draw (1,3) node {$\bullet$};
		\draw (5,1) node {$\bullet$};
		\draw (2,5) node {$\bullet$};
		\draw (3,4) node {$\bullet$};
		\draw (0.5,3) node {$\times$};
		\draw (2,2) node {$\times$};
		\draw (4,0.5) node {$\times$};
		\draw (3.5,4.5) node {$+$};
		\draw[line width=1pt,blue] (1,3.5)--++(0.5,0.5)++(-0.5,-0.5)--++(0,-0.5)++(0,0.5)--++(-0.5,0);
		\draw[line width=1pt,blue] (2.5,5)--++(0.5,0.5)++(-0.5,-0.5)--++(0,-0.5)++(0,0.5)--++(-0.5,0);
		\draw[line width=1pt,blue] (3.5,4)--++(0.5,0.5)++(-0.5,-0.5)--++(0,-0.5)++(0,0.5)--++(-0.5,0);
		\draw[line width=1pt,blue] (5,1.5)--++(0.5,0.5)++(-0.5,-0.5)--++(0,-0.5)++(0,0.5)--++(-0.5,0);
		\end{tikzpicture} \\
\begin{tikzpicture}[line cap=round,line join=round,>=triangle 45,x=1cm,y=1cm]
\white (W1) at (0,0) {};
\black (B1) at (1,1) {};
\draw (W1) to node[midway,sloped] {$>$} (B1);
\end{tikzpicture} & \begin{tikzpicture}[line cap=round,line join=round,>=triangle 45,x=1cm,y=1cm]
\white (W1) at (-1,0) {};
\white (W2) at (0,0) {};
\white (W3) at (1,0) {};
\black (B1) at (-0.5,-1) {};
\black (B2) at (0.5,-1) {};
\black (B3) at (0,1) {};
\draw (W2) to node[midway,sloped] {$>$} (B3);
\draw (W1) to node[midway,sloped] {$>$} (B1) to node[midway,sloped] {$>$} (W2) to node[midway,sloped] {$<$} (B2) to node[midway,sloped] {$<$} (W3);
\end{tikzpicture} & \begin{tikzpicture}[line cap=round,line join=round,>=triangle 45,x=1cm,y=1cm]
\white (W1) at (0,0) {};
\white (W2) at (1,0) {};
\white (W3) at (3,1) {};
\white (W4) at (3,-1) {};
\black (B1) at (0,-1) {};
\black (B2) at (1,1) {};
\black (B3) at (2,0) {};
\black (B4) at (3,0) {};
\draw (W1) to node[midway,sloped] {$>$} (B1) to node[midway,sloped] {$>$} (W2) to node[midway,sloped] {$>$} (B2);
\draw (W2) to node[midway,sloped] {$<$} (B3) to node[midway,sloped] {$<$} (W3);
\draw (B3) to node[midway,sloped] {$>$} (W4) to node[midway,sloped] {$>$} (B4);
\end{tikzpicture} \\
\end{tabular}
\caption{\label{figure bipartite graph} In the first row some tropical curves subject to $P$-constraints and $\psi L$-constraints. Below the associated bipartite graphs $\B(\Gamma)$ along with the orientation given by Lemma \ref{lem orientation bipartite graph}.}
\end{center}
\end{figure}
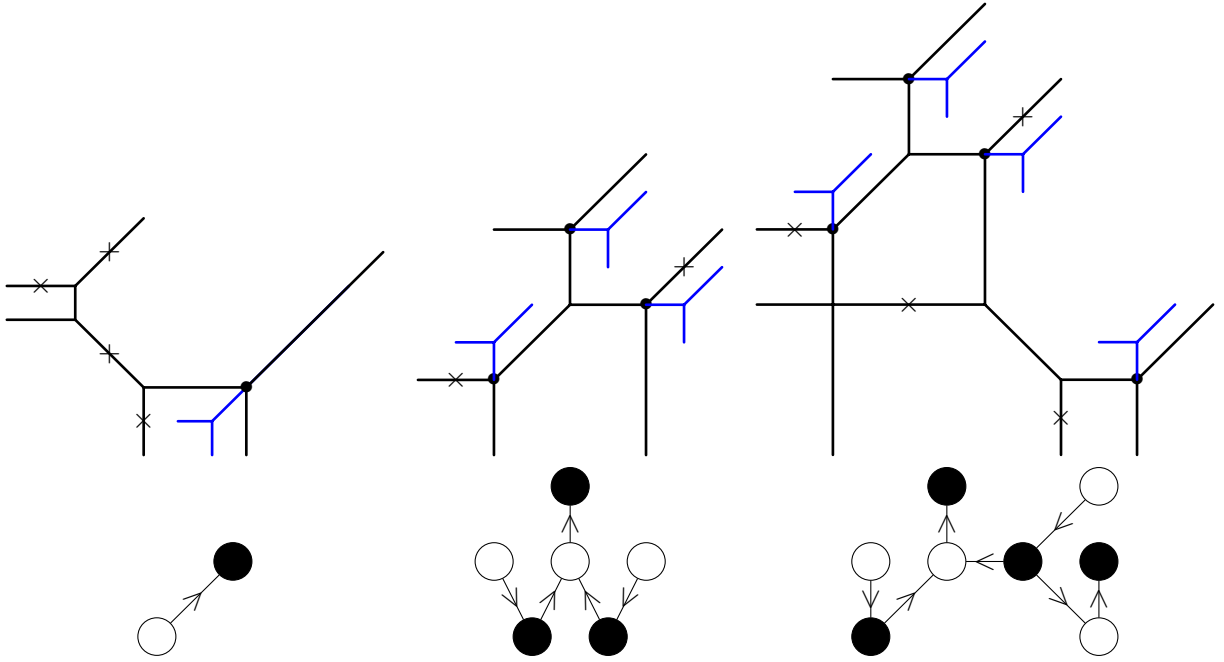

\begin{expl}
In Figure \ref{figure bipartite graph} we can see bipartite graphs along with the orientation given by Lemma \ref{lem orientation bipartite graph}. In Figure \ref{figure example position of marked points r=2} we can also see two conics subject to three $P$-constraints and two $\psi L$-constraints along with the associated $\B(\Gamma)$.
\end{expl}
			
	\begin{prop}
	Let $h:\Gamma\to\RR^2$ be a solution to the problem, with the $\psi^{k_i} L$-markings adjacent to distinct vertices. For each $V_i$ adjacent to a $\psi^{k_i} L$-marking, let $u_i$ be the slope of $\Xi_i$ at $V_i$, and let $v_i$ be the slope of $h$ on the edge of $\Gamma$ adjacent to $V_i$ and belonging to the bounded component corresponding to the unique inward edge of $\B(\Gamma)$. The multiplicity is given by
	$$m_\Gamma=\prod_i|\det(u_i,v_i)|\prod_V m_V.$$
	\end{prop}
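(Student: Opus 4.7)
Our plan is to imitate the determinant computation from Propositions \ref{prop-mult1} and \ref{prop-mult2}, using the same three-step pruning toolbox: (i) replace each $P$-constraint by a pair of fixed ends, (ii) for every trivalent unmarked vertex $V$ whose two neighbours are fixed ends, prune it with factor $m_V$, and (iii) for each $\psi^{k_i}L$-marking $V_i$, prune it as soon as one of its adjacent edges has been turned into a fixed end of slope $v$, producing the factor $|\det(u_i,v)|$. We argue by induction on $r$, the cases $r=1$ and $r=2$ being Propositions \ref{prop-mult1} and \ref{prop-mult2}.

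The key new ingredient will be the \emph{order} in which step (iii) is invoked. We want to apply it, at each stage, to a $\psi^{k_i}L$-marking $V_i$ whose paired bounded component (in the sense of the orientation of Lemma \ref{lem orientation bipartite graph}) has already been collapsed by step (ii), so that step (iii) yields precisely the factor $|\det(u_i,v_i)|$ with the prescribed slope $v_i$. To exhibit such a $V_i$, we look for a source in the directed graph $\B(\Gamma)$: the orientation is acyclic, so a source exists; no black vertex can be a source since each has in-degree one; hence some white source $W$ exists. Since $W$ has in-degree zero and out-degree one, it has total degree one, i.e.\ $W$ is a white leaf. Its unique $\B$-neighbour $V_i$ is the black paired to $W$ by the forward edge, and the slope of the unique edge of $\Gamma$ from $V_i$ into $W$ is exactly the $v_i$ appearing in the statement.

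Because $W$ has $V_i$ as its only $\psi^{k_j}L$-boundary vertex, step (ii) inside $W$ collapses it entirely to a single fixed end of slope $v_i$ attached at $V_i$; then step (iii) at $V_i$ with this fixed end contributes $|\det(u_i,v_i)|$, removes $V_i$, and turns each other edge adjacent to $V_i$ into a fixed end at its opposite endpoint. In $\B(\Gamma)$ both $W$ and $V_i$ vanish, and the restricted orientation on the remaining bipartite forest still has each white of out-degree one and each black of in-degree one (the only deleted edges were $W$'s unique out-edge and $V_i$'s out-edges, neither of which played the role of another white's out-edge nor another black's in-edge). The induction hypothesis then supplies the remaining factor $\prod_{j\neq i}|\det(u_j,v_j)|\prod_V m_V$, yielding the announced multiplicity. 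The only delicate point will be guaranteeing the availability of such a white leaf at each stage, which is precisely what the orientation-theoretic argument above ensures; once this is in place the computation is an entirely mechanical iteration of the toolbox already used in Propositions \ref{prop-mult1} and \ref{prop-mult2}.
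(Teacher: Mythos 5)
Your proof is correct and follows essentially the same route as the paper's: split at the $P$-markings, prune trivalent vertices to collapse the white-leaf bounded components, then prune the adjacent $\psi^{k_i}L$-marking for the factor $|\det(u_i,v_i)|$, and iterate. Your source-in-the-acyclic-orientation argument is a slightly more explicit justification that a white leaf is always available (the paper leaves this implicit in "continue by induction"), but it is a refinement of the same induction, not a different method.
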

	
	\begin{proof}
	As in the case of one and two $\psi^{k_i} L$-constraints, the multiplicity is given by the determinant of the evaluation map. We use the toolbox to compute the determinant, proceeding as follows:
	\begin{itemize}[label=$\ast$]
	\item Remove every $P$-constraint by splitting $\Gamma$ at each $P$-marking, replacing the vertex by a pair of unbounded ends with fixed position.
	\item For every trivalent vertex adjacent to a pair of fixed ends, we prune the vertex by replacing it with an end of fixed position. This contributes $m_V$ to the determinant.
	\item For a $\psi^{k_i} L$-marking adjacent to a unique end of fixed position, we prune $\Gamma$ by removing the $\psi^{k_i} L$-marking, replacing every other adjacent edge by an end with fixed position. This contributes $|\det(u_i,v_i)|$ for the $\psi^{k_i} L$-marking $V_i$.
	\end{itemize}
	The second step progressively prunes the bounded components corresponding to the white leaves of $\B(\Gamma)$. Then, the third step prunes the black neighbour of such a white leaf. We thus continue by induction. The process terminates yielding the expected multiplicity.
	\end{proof}

\section{Explicit computations}\label{sec-explicit}

	\subsection{Case of a unique line constraint}
	
	In this section we give an explicit formula to compute the invariants $\ang{\psi L}_d$ and $\ang{\psi^2 L}_d$. The latter reduces the computation to point invariants and some relative invariants, which can be computed using for instance the lattice path algorithm from \cite{mikhalkin2005enumerative}.
	
	To compute the invariants, we proceed as in the proof of the Caporaso-Harris formula \cite{gathmann2007caporaso} by choosing a very specific choice of constraints. Namely, we choose a tropical line $L$ whose vertex is far from the points in the configuration (see also the beginning of the proof of Theorem 4.3 in \cite{gathmann2007caporaso}, and in particular the picture). We start with some generic results.
	
	\begin{lem}\label{lemma meeting the point region}
	Assume a tropical curve contributes to $\ang{\psi^k L}_d$ for a choice of $L$ with vertex far from the points in the configuration. We have the following.
	\begin{enumerate}[label=(\roman*)]
	\item The extremity of the bounded edge adjacent to the $\psi^k L$-marking in the bounded component of the complement of marked points lies in the point region.
	\item For an unbounded component adjacent to the $\psi^k L$-marking, it is one of the following type, as depicted on Figure \ref{fig-contribution patterns}:
		\begin{enumerate}[label=(\alph*)]
		\item it starts with a bounded edge going inside the point region, it thus has the same slope as the bounded edge adjacent to the $\psi^k L$-marking in the bounded component,
		\item an end adjacent to the $\psi^k L$-marking,
		\item a path from the marking to an end such that at each trivalent vertex along the way, the remaining edge goes inside the point region.
		\end{enumerate}
	\end{enumerate}		
	\end{lem}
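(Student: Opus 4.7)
The plan is to combine the component structure of $\Gamma\setminus\P$ furnished by Lemma~\ref{lemma position points k=1} with a propagation/rigidity argument exploiting that $v_L$ has been pushed far from the point region: the $\psi^kL$-marking then lies far from the points while every $P$-marking lies inside the point region. I will first settle~(i) by a backward induction on the bounded component, and then settle~(ii) by tracing, in each adjacent unbounded component, the unique path from the marking to the end.

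For~(i), by Lemma~\ref{lemma position points k=1} the unique bounded component $B$, together with its boundary marked points, is a subtree of $\Gamma$ with trivalent interior vertices, whose leaves are the $\psi^kL$-marking and some nonempty subset of the $P$-markings (nonempty by the tropical Menelaus argument in the proof of that lemma). Rooting this subtree at the $\psi^kL$-marking, I would process its vertices from the $P$-marking leaves upward: at any trivalent vertex both of whose children are either $P$-markings or already-localized vertices in the point region, that vertex is pinned down as the unique intersection of the two lines through its children with the fixed integer slopes dictated by balancing, hence it too lies in the point region. Iterating up to the root, $V_0$ is forced to lie in the point region.

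For~(ii), fix an unbounded component $C$ of $\Gamma\setminus\P$ adjacent to the $\psi^kL$-marking via an edge $e_1$ and with unique end $\varepsilon$. If $e_1=\varepsilon$, case~(b) holds; otherwise $e_1$ is bounded and I let $\gamma\subset C$ be the unique path from the marking to $\varepsilon$. At each trivalent vertex $V$ on $\gamma$, exactly one edge leaves $\gamma$ and roots a subtree $T_V$ of $C$; since $\varepsilon$ is the only end of $C$ and lies on $\gamma$, $T_V$ contains no end, so its leaves are all $P$-markings, and the argument of~(i) applied to $T_V$ places the far endpoint of the side edge at $V$ in the point region. To separate~(a) from~(c), I would distinguish according to whether the first vertex $V_1$ of $\gamma$ lies in the point region: if it does, then $e_1$ is a bounded edge into the point region, and because the $\psi^kL$-marking has been placed far from the points, the only integer direction in which an edge from the marking can reach the region is the one realized by $e_0$, so $e_1$ and $e_0$ share the slope~$v_0$ and we get case~(a); if it does not, then every vertex along $\gamma$ lies off the point region, and the already-proven side-branch localization yields case~(c).

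The most delicate step will be the slope-coincidence assertion in case~(a): one must verify quantitatively that pushing $v_L$ sufficiently far into a chosen cone direction collapses the set of integer slopes along which a bounded edge emanating from any point of $L$ could reach the bounded point region to a single primitive vector, so that $e_0$ and $e_1$ are forced to be parallel. This mirrors the Caporaso--Harris choice of configuration in~\cite{gathmann2007caporaso}. A secondary subtlety is ruling out side subtrees whose leaves would fail to be pinned down by $P$-markings; this I expect to handle with the same string-deformation principle already invoked in the proof of Lemma~\ref{lemma position points k=1}, since such a subtree would produce a one-parameter family of solutions, contradicting generic $0$-dimensionality of the intersection.
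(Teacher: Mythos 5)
Your proposal is correct but runs on a genuinely different engine than the paper's proof. The paper never localizes vertices by intersecting lines: for each edge under consideration it looks at the line containing its image and argues via balancing that the half-plane missing the point region would have to contain an end of the component (one follows the outgoing edges into that half-plane), contradicting boundedness of the component, respectively the uniqueness of its end; the trichotomy (a)/(b)/(c) is then organized by the slope of the first edge rather than by the position of its extremity. You instead prove a rigidity statement --- a subtree all of whose leaves are $P$-markings is pinned down, vertex by vertex, as the intersection of lines of prescribed integer slope through already-localized points --- and combine it with the observation that a far-away marking sees the point region under an angle too small to contain two distinct primitive integer directions of size bounded in terms of $d$. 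Both are valid; note in your favour that at any contributing trivalent vertex no two adjacent edges are parallel (otherwise $m_V=0$), so the two lines in your induction always meet transversally. The paper's version is shorter and essentially constant-free; yours makes ``far enough'' quantitative in the Caporaso--Harris style you cite, and the localization lemma is reused verbatim for the side subtrees $T_V$ in part (ii). Two small caveats: the fattening estimate (the intersection of two lines of bounded integer slope through a region of diameter $\rho$ lies within $C(d)\rho$ of that region, iterated a bounded number of times) is already needed in your induction for (i), not only in case (a) where you flag it; and your claim that in case (c) every vertex of $\gamma$ avoids the point region is neither justified nor needed, since (c) only asserts that the side edges enter the point region, which your subtree argument delivers directly.
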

	
	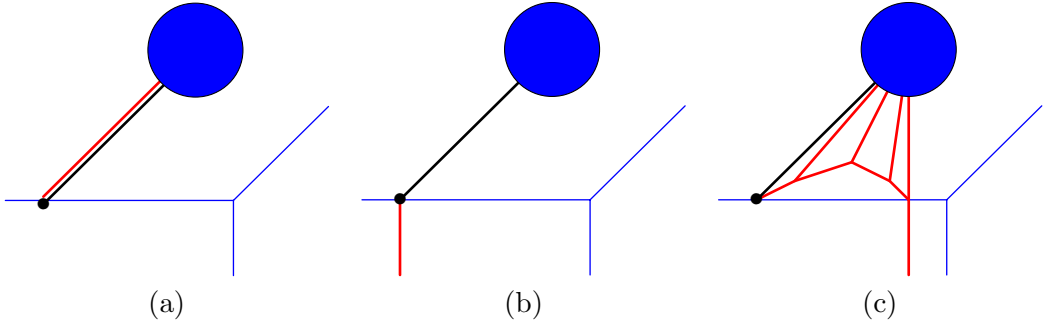
\begin{figure}
	\begin{center}
	\begin{tabular}{ccc}
	\begin{tikzpicture}[line cap=round,line join=round,>=triangle 45,x=0.25cm,y=0.25cm]
\draw [line width=0.5pt,blue] (0,0)--++(5,5) (0,0)--++ (-12,0) (0,0)--++(0,-4);
\draw[line width=1pt,red] (-10,0.2)--++(8,8);
\draw[line width=1pt] (-2,7.8)--++(-8,-8) node {$\bullet$};
\draw[fill=blue] (-2,8) circle [radius=2.5];
\end{tikzpicture} & \begin{tikzpicture}[line cap=round,line join=round,>=triangle 45,x=0.25cm,y=0.25cm]
\draw [line width=0.5pt,blue] (0,0)--++(5,5) (0,0)--++ (-12,0) (0,0)--++(0,-4);
\draw[line width=1pt,red] (-10,0)--++(0,-4);
\draw[line width=1pt] (-2,8)--++(-8,-8) node {$\bullet$};

\draw[fill=blue] (-2,8) circle [radius=2.5];
\end{tikzpicture} & \begin{tikzpicture}[line cap=round,line join=round,>=triangle 45,x=0.25cm,y=0.25cm]
\draw [line width=0.5pt,blue] (0,0)--++(5,5) (0,0)--++ (-12,0) (0,0)--++(0,-4);
\draw[line width=1pt,red] (-10,0)--(-8,1)--(-5,2)--(-3,1)--(-2,0)--++(0,-4);
\draw[line width=1pt,red] (-2,8)--(-8,1) (-2,8)--(-5,2) (-2,8)--(-3,1) (-2,8)--(-2,0);
\draw[line width=1pt] (-2,8)--++(-8,-8) node {$\bullet$};

\draw[fill=blue] (-2,8) circle [radius=2.5];
\end{tikzpicture} \\
(a) & (b) & (c) \\
	\end{tabular}
	\caption{\label{fig-contribution patterns} Deformation patterns occuring in Lemma \ref{lemma meeting the point region}. The blue disk is the region where the marked points lie. The bounded component adjacent to the $\psi^k L$-marking is in black, and the considered adjacent unbounded component in red. }
	\end{center}
	\end{figure}
	
	\begin{proof}
	Let us consider the unique bounded component in the complement of marked points adjacent to the $\psi^k L$-marking, and the bounded edge adjacent to the $\psi^k L$-marking. If its extremity was not in the point region, then the classical line that contains it splits $\RR^2$ in two halves only one of them containing the point region. The trivalent vertex at its extremity has one of its edges going inside the half that does not contain the point region. There is at least an end in this region by balancing, contradicting the fact that the component is bounded. Thus, the extremity lies in the point region. This concludes the proof of (i).
	
	For (ii), consider an unbounded component adjacent to the $\psi^k L$-marking that is not an end. If the slope is the same as the edge going in the bounded component, its extremity has to lie in the point region, otherwise we can apply the previous reasoning to the lines that contain the other two adjacent edges to its extremity: each of them cuts $\RR^2$ in two halves only one of them containing the point region, and we would get an end in each direction by balancing. We get curves in case (a).
	
	Assume that we are not in (a): the slope is different, and that the edge adjacent to the $\psi^k L$-marking is not an end, so that we are not in (b). We still consider the line containing the edge adjacent to the $\psi^k L$-marking: it splits $\RR^2$ into two halves, and the half not containing the point region has to contain an end. As there is a unique end, the remaining edge going into the half that contains the point region is bounded and has to go inside the point region: if not, we would get a second end in the component proceeding as in the previous paragraph. Following the edges going into the half not containing the point region, we get curves as described in (c) and pictured in Figure \ref{fig-contribution patterns}(c).
	\end{proof}
	
		\subsubsection{A formula for $\ang{\psi L}_d$, i.e.\ the $k=1$ case.} First, we assume that the power of the unique $\psi^k L$-constraint is $1$.
		
		\begin{prop}\label{prop-k=1}
		Using the notation from Section \ref{subsec-enum}, we have
		$$\boxed{ \ang{\psi L}_d= 2N_d+N_d(2). }$$
		\end{prop}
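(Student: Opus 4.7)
The plan is to enumerate directly the tropical curves contributing to $\ang{\psi L}_d$, in the spirit of the Caporaso--Harris computation. I choose the constraint line $L$ so that its vertex $V_L$ lies far from the point configuration, with the $3d-2$ point constraints placed in the compact sector of $\RR^2$ cut out by the three rays of $L$. By Lemma \ref{lemma meeting the point region}, the $\psi L$-marked vertex $V$ (of valency $k+2=3$ in $\RR^2$) then lies on one of the three rays of $L$, its bounded edge reaches the point region, and its two other adjacent edges belong to unbounded components of type (a), (b), or (c) as in that lemma.

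A short case analysis, using balance at $V$ and the fact that all ends of $\Delta_d$ have weight one, shows that the only configuration that can actually occur is the one in which both non-bounded edges at $V$ are honest ends of $\Gamma$: any mixed configuration involving type (a) or a non-trivial type (c) would force an end of $\Gamma$ of weight at least two, contradicting $\Delta_d$. Writing the two end slopes at $V$ as $e_1,e_2\in\{(1,1),(-1,0),(0,-1)\}$, balance forces the bounded edge at $V$ to have outgoing slope $v=-(e_1+e_2)$.

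I then split into two subcases. If $e_1\ne e_2$, then $v$ is the remaining primitive direction and Proposition \ref{prop-mult1} gives $|\det(u,v)|=1$ for the unique ray of $L$ not parallel to $v$. The placement of $V_L$ rules out precisely one of the three pairs $\{e_1,e_2\}$ on geometric grounds, because only for the other two does the bounded edge point into the point region. For each admissible pair, a bijection identifies the rest of the curve, obtained after detaching $V$ and its two adjacent ends, with a degree $d$ rational tropical curve through the $3d-2$ point constraints together with the additional point where the bounded edge from $V$ terminates, giving a contribution of $N_d$ and hence $2N_d$ in total. If instead $e_1=e_2=:e$, then $v=-2e$ has weight two, so $|\det(u,v)|=2$, and the automorphism exchanging the two parallel ends contributes a factor $\tfrac{1}{2}$. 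Exactly one direction $e$ is then compatible with the placement of $V_L$; the rest of the curve is a rational tropical curve of degree $\{(1,1)^d,(-1,0)^d,(0,-1)^{d-2},(0,-2)\}$ (up to relabeling of directions) through $3d-2$ points, contributing $N_d(2)$.

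The main technical step is the rest-of-curve bijection with $N_d$ in the first subcase: one must verify that extending the $(-v)$-slope end of each rest curve by the trivalent piece at $V$ gives a bijection with degree $d$ tropical curves through $3d-1$ generic points (the $3d$-th generic point being replaced by the line constraint), and that tropical multiplicities match on both sides. Summing the contributions from the two subcases then yields $\ang{\psi L}_d=2N_d+N_d(2)$.
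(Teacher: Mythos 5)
Your overall strategy (specialize $L$ with vertex far from the point configuration and enumerate the local patterns at the $\psi L$-vertex) is the same as the paper's, and your identification of the three contributing patterns and their multiplicities --- two mixed pairs of ends each contributing $N_{dL-E}=N_d$ with determinant $1$, and one pair of parallel ends contributing $\tfrac{2}{2}N_d(2)$ --- is correct. The gap is in your exclusion of the remaining configurations. You claim that any configuration in which one of the two unbounded components adjacent to the $\psi L$-vertex is of type (a) or a non-trivial type (c) ``would force an end of $\Gamma$ of weight at least two''. This is true for type (a) combined with an end (two parallel edges of slope $v$ at a trivalent vertex force a third edge of slope $-2v$, which cannot be an end of $\Delta_d$), but it is false for type (c): a type (c) path sheds weight at each of its intermediate trivalent vertices through the bounded branches going into the point region, and can perfectly well terminate in a weight-one end of $\Delta_d$. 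This is exactly what happens for $k\geqslant 2$, where such patterns do contribute (see Figures \ref{figure patterns k=2} and \ref{figure patterns for k=3} and the terms $N_{\square_d}$, $N_d(3)$, etc.\ in Proposition \ref{prop-k=2}). So your case analysis does not rule out configurations where one or both of the non-bounded edges at the trivalent $\psi L$-vertex start a non-trivial type (c) path, and Lemma \ref{lemma meeting the point region} alone does not do so either.

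The paper closes this case by a different mechanism: it views the curves through the $3d-2$ points (forgetting the $\psi L$-condition) as a one-dimensional family whose deformation moves the unique string, and invokes the result of \cite{GM053} that the only unbounded deformations are those where the string consists of two ends adjacent to a common vertex; since $L$ is far from the points, the marked vertex can only reach $L$ through such an unbounded deformation, which forces both remaining edges at the vertex to be ends. If you want to keep a purely local argument, you would instead have to run a slope analysis as in the proof of Lemma \ref{lemma shape of curves for big cross-ratio r=1} (cases (a)--(e) there), showing that for a trivalent vertex the directions of the branches of a type (c) path, which must all point into the point region, are incompatible with the balancing condition and with the requirement that the bounded edge of slope $v$ also reach the point region. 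One further small point: a tropical line does not cut out a compact sector of $\RR^2$, so your description of the placement of the point constraints should be rephrased (the points lie in one of the three unbounded sectors, far from all three rays of $L$); this does not affect the count.
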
			
	
	\begin{proof}
	We choose a specific configuration of constraints where the vertex of the line $L$ is far from the points of the configuration. The set of tropical curves passing through the point configuration is a $1$-dimensional family, and each curve passing through the points contains a unique string: a path from one end to another not passing through the marked points. The $1$-parameter family is obtained by deforming the string. According to \cite{GM053}, the only deformations that are unbounded are the ones where the string is only comprised of two ends adjacent to the same vertex. Because of our choice of constraints, we need such an unbounded deformation in order to reach the line $L$ with the vertex of the string, thus satisfyig the $\psi L$ condition. A priori, there are $6$ possibilities for a string formed by two ends, depending on whether the ends have the same slope or not. Given a tropical line $L$ far from the point constraints, only three of these possibilities yield solutions by meeting the line as required, as depicted in Figure \ref{figure patterns for k=1}. Their contribution are respectively $N_{dL-E}=N_d$, $\frac{2}{2}N_d(2)$ and $N_{dL-E}$.
	\end{proof}
	
	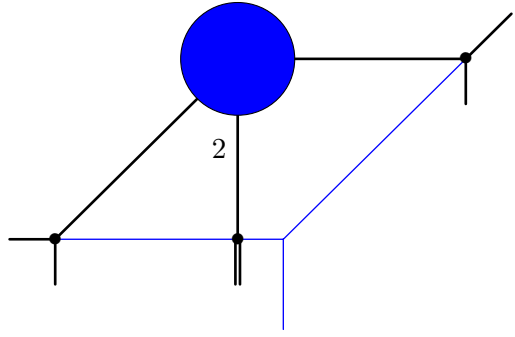
\begin{figure}
	\begin{center}
	\begin{tikzpicture}[line cap=round,line join=round,>=triangle 45,x=0.3cm,y=0.3cm]
\draw [line width=0.5pt,blue] (0,0)--++(10,10) (0,0)--++ (-12,0) (0,0)--++(0,-4);
\draw[line width=1pt] (-2,8)--++(-8,-8)--++(-2,0)++(2,0) node {$\bullet$} --++(0,-2);
\draw[line width=1pt] (-2,8)--++(10,0)--++(2,2)++(-2,-2) node {$\bullet$} --++(0,-2);
\draw[line width=1pt] (-2,8)-- node[midway,left] {$2$} ++(0,-8) node {$\bullet$};
\draw[line width=1pt] (-2.1,0)--++(0,-2);
\draw[line width=1pt] (-1.9,0)--++(0,-2);
\draw[fill=blue] (-2,8) circle [radius=2.5];
\end{tikzpicture}
	\caption{\label{figure patterns for k=1}Patterns occurring for tropical curves contributing to $\ang{\psi L}_d$ with a tropical line far from the points.}
	\end{center}
	\end{figure}
	
	\begin{expl}\label{ex-k=1}
	With the formula from Proposition \ref{prop-k=1}, we can compute the following values:
	\begin{itemize}[label=$\ast$]
	\item $\ang{\psi L}_1=2\cdot 1+0=2$,
	\item $\ang{\psi L}_2=2\cdot 1 + 2=4$,
	\item $\ang{\psi L}_3=2\cdot 12 + 36=60$.
	\end{itemize}
	\end{expl}

		\subsubsection{A formula for $\ang{\psi^2 L}_d$, i.e.\ the $k=2$ case.} We now give a formula for $\ang{\psi^2 L}_d$. We use the degree
		$$\square_d=\{(1,1)^d,(0,-1)^{d-1},(-1,0)^{d-2},(-2,-1)\},$$
		along with its symmetric ones obtained by switching the coordinates and the ends of directions $(0,-1)$ and $(1,1)$.
		
		\begin{prop}\label{prop-k=2}
		We have
		$$\boxed{ \ang{\psi^2 L}_d=3N_{\square_d}+\frac{1}{2}N_d(3)+\frac{1}{2}\sum_{d_1+d_2=d}\bino{3d-3}{3d_1-1}d_1 N_{d_1}N_{d_2} . }$$
		\end{prop}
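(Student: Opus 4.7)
The plan is to mimic the strategy used for $k=1$ in Proposition \ref{prop-k=1}: we choose a configuration where the vertex of the tropical line $L$ is placed very far from the generic configuration of $3d-3$ point constraints, reducing the problem to a careful classification of how the $\psi^2 L$-marking can sit on $L$ in the ``far'' region. Under this choice, the vertex $V$ adjacent to the $\psi^2 L$-marking is $4$-valent (not counting the contracted marked end), lies on $L$ far from the point region, and by the natural extension of Lemma \ref{lemma position points k=1} to the case $k=2$, it sits on the boundary of the unique bounded component of $\Gamma\setminus\P$ and is adjacent to three unbounded components.

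By Lemma \ref{lemma meeting the point region}, each of these three unbounded components adjacent to $V$ must be of one of the patterns (a), (b), or (c). I would then perform a case analysis on the directions $v_1,v_2,v_3$ of the three edges of $V$ going into the unbounded components, constrained by balancing $v_1+v_2+v_3+v_4=0$, where $v_4$ is the direction of the bounded edge going back toward the point region. The three terms of the formula correspond to three different combinatorial families:
\begin{enumerate}
\item \emph{Three direct ends (type (b)) with two equal to one primitive direction in $\Delta_d$ and the third equal to another.} Cutting the bounded edge detaches a curve whose degree is $\square_d$ (or one of the two degrees obtained by applying the symmetries of the standard triangle that permute $(1,1), (-1,0), (0,-1)$). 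By symmetry these three cases all contribute $N_{\square_d}$, yielding $3N_{\square_d}$. Here Proposition \ref{prop-mult1} computes the determinant factor $|\det(u,v)|$ compatibly with the $m_V$ at the bounded vertex.
\item \emph{Three direct ends of the same primitive direction.} Then $v_1=v_2=v_3$, say equal to $(0,-1)$, so the bounded edge comes in with direction $(0,3)$ and after cutting we are left with a curve counted by $N_d(3)$. Combining the automorphism factor exchanging the three parallel ends with the $|\det(u,v)|$ term from Proposition \ref{prop-mult1} yields the factor $\tfrac{1}{2}$.
\item \emph{One unbounded component is of type (c), forming a long path that re-enters the point region.} Such a path effectively disconnects the bounded part of $\Gamma$ into a rational curve of degree $d_1$ passing through a chosen subset of $3d_1-1$ of the points (this accounts for the binomial $\binom{3d-3}{3d_1-1}$) together with a rational curve of degree $d_2$ passing through the remaining $3d_2-2$ points. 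The distinguished end of the $d_1$-piece entering the type (c) path gives the factor $d_1=N_{d_1}(1)/N_{d_1}$, so the contribution is $d_1N_{d_1}N_{d_2}$, and $\tfrac{1}{2}$ comes from the symmetry between the two pieces combined with the multiplicity of Proposition \ref{prop-mult1}.
\end{enumerate}

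The main obstacle I anticipate is to rigorously rule out mixed configurations (in particular those where some unbounded component is of type (a), or where more than one is of type (c)), and to check that the remaining configurations enumerate exactly as listed with the announced multiplicities. The delicate point is the third term: one has to identify precisely how a type (c) unbounded component separates the bounded component into two rational subcurves, exhibit the $d_1$-to-$d_2$ splitting at the level of combinatorial types, and verify the binomial and $d_1$ factors. I would handle this by combining the balancing analysis near $V$ with the rigidity coming from the genericity of the $(3d-3)$ point constraints, exactly as in Lemma \ref{lemma meeting the point region}, and then cross-check against the explicit values for small $d$ computed in Section \ref{sec-explicit} to confirm the combinatorial factors.
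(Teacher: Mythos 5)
Your overall strategy --- push the vertex of $L$ far from the point region and classify the local patterns at the $4$-valent $\psi^2 L$-vertex via Lemma \ref{lemma meeting the point region}, weighting each by the determinant and automorphism factors of Proposition \ref{prop-mult1} --- is exactly the paper's. The first two families are essentially right, but your accounting of the $3N_{\square_d}$ term is not: there are \emph{four} realizable patterns of type ``two ends in one primitive direction plus one end in another'' (the bounded edge must point into the point region, which rules out two of the six a priori combinations), and they contribute $\tfrac12 N_{\square_d}$, $N_{\square_d}$, $N_{\square_d}$, $\tfrac12 N_{\square_d}$ respectively, the variation coming from $|\det(u,v)|$ being $1$ or $2$ against the automorphism factor $\tfrac12$. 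No single pattern contributes ``$N_{\square_d}$ by symmetry,'' so the total $3N_{\square_d}$ only comes out after the actual multiplicity computation, which your sketch does not perform.

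The genuine gap is the third term. You attribute it to configurations in which one unbounded component adjacent to the vertex is of type (c), and describe such a path as ``disconnecting the bounded part into two rational curves.'' Neither claim is correct: a type (c) component is itself one of the unbounded components and does not disconnect the bounded one, and for $k=2$ type (c) components with a genuine trivalent vertex on the path do not occur at all (they first appear for $k=3$, cf.\ Figure \ref{figure patterns for k=3}). The configuration actually responsible for $\tfrac12\sum\bino{3d-3}{3d_1-1}d_1 N_{d_1}N_{d_2}$ is the pattern with two parallel vertical edges: the vertex sits on the horizontal ray of $L$ with two upward edges of slope $(0,1)$ and two downward ends, so that one adjacent unbounded component is of type (a), i.e.\ parallel to the bounded edge. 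Cutting there yields a degree $d_1$ curve through $3d_1-1$ points (containing the bounded component, with $d_1$ choices of which bottom end crosses $L$) and a degree $d_2$ curve through the remaining $3d_2-2$ points with a fixed bottom end ($\widetilde{N}_{d_2}(1)=N_{d_2}$), the $\tfrac12$ coming from the automorphism exchanging the two parallel downward ends. Since your case list never considers type (a) components, it simultaneously misses the configuration that produces the third term and invokes one that contributes nothing; the agreement of your final formula with the statement is therefore not supported by the argument.
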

		
		\begin{proof}
		We again choose a tropical line constraint $L$ with vertex far from the points of the configuration. As $k=2$, we have four adjacent edges leaving the $\psi^2 L$-vertex. The list of possibilities for the type of the $\psi^2 L$-vertex is depicted on Figure \ref{figure patterns k=2}. The two patterns at the far left and far right each contribute for $\frac{1}{2}N_{\square_d}$. The other two contribute for $\frac{2}{2}N_{\square_d}$ since the determinant between the bounded component and $L$ is not $1$ but $2$. The pattern with a vertical edge of weight $3$ contributes for $\frac{3}{3!}N_d(3)$. The last pattern with parallel edges contributes the following:
		$$\frac{1}{2}\sum_{d_1+d_2=d}\bino{3d-3}{3d_1-1}d_1N_{d_1}N_{d_2}.$$
		The $\frac{1}{2}$ is for automorphisms, the sum is over the splittings of the degree between the bounded component and the unbounded one. We dispatch the marked points. Then we have $N_{d_1}$ choices for the bounded component, $d_1$ choices for the end that intersects $L$, and $\widetilde{N}_{d_2}(1)=N_{d_2}$ choices for the second component.
		\end{proof}
		
		\begin{figure}
		\begin{center}
		\begin{tikzpicture}[line cap=round,line join=round,>=triangle 45,x=0.3cm,y=0.3cm]
\draw [line width=0.5pt,blue] (0,0)--++(20,20) (0,0)--++ (-20,0) (0,0)--++(0,-4);

\draw[line width=1pt] (-2,8)--++(-16,-8) node {$\bullet$}--++(0,-2)++(0,2)++(0,0.1)--++(-2,0)++(0,-0.2)--++(2,0);

\draw[line width=1pt] (-2,8)--++(-4,-8) node {$\bullet$}--++(-2,0)++(2,0)++(0.1,0)--++(0,-2)++(-0.2,0)--++(0,2);

\draw[line width=1pt] (-2,8)--++(5,-5) node {$\bullet$}--++(2,2)++(-2,-2)++(0.1,0)--++(0,-2)++(-0.2,0)--++(0,2);

\draw[line width=1pt] (-2,8)--++(20,10) node {$\bullet$} ++(-0.1,0.1)--++(2,2)++(0.2,-0.2)--++(-2,-2)--++(0,-2);

\draw[line width=1pt] (-2.5,7)-- node[midway,left] {$3$} ++(0,-7) node {$\bullet$};
\draw[line width=1pt] (-2.7,0)--++(0,-2);
\draw[line width=1pt] (-2.5,0)--++(0,-2);
\draw[line width=1pt] (-2.3,0)--++(0,-2);

\draw[line width=1pt] (-1.6,7)--++(0,-9)++(0.2,0)--++(0,9);
\draw (-1.5,0) node {$\bullet$};

\draw[fill=blue] (-2,8) circle [radius=2.5];
\end{tikzpicture}
		\caption{\label{figure patterns k=2} Patterns occurring for tropical curves contributing to $\ang{\psi^2 L}_d$ with a tropical line far from the points.}
		\end{center}
		\end{figure}
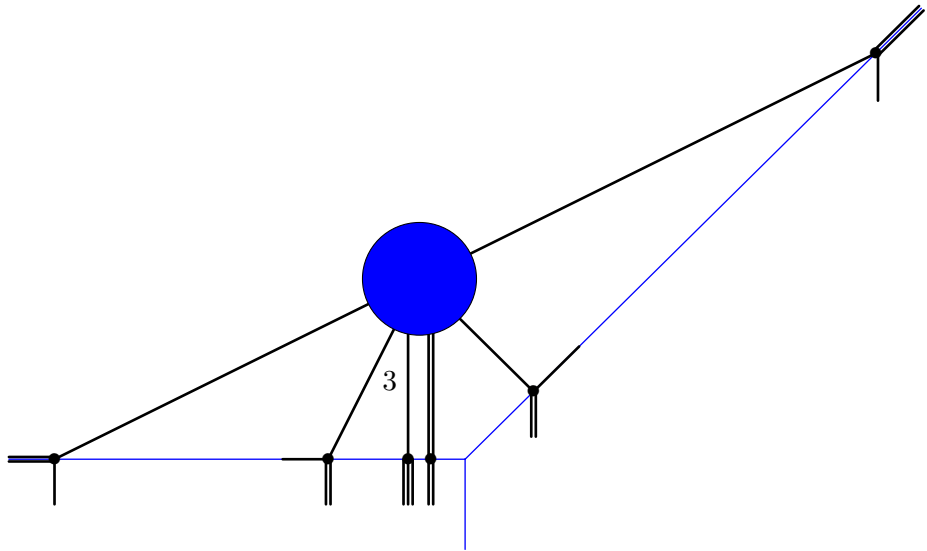
		
		\begin{expl}\label{ex-k=2}
		Using the formula of Proposition \ref{prop-k=2} , we can compute:
		\begin{itemize}[label=$\ast$]
		\item $\ang{\psi^2 L}_2=\frac{9}{2}$,
		\item $\ang{\psi^2 L}_3=54$.
		\end{itemize}
		\end{expl}
		
		\subsubsection{Further cases.}\label{subsec-furthercases} The explicit computation of $\ang{\psi^k L}_d$ for $k\geqslant 3$ can also be carried out by this method provided one is able to list the different patterns that can occur at the $\psi^k L$-vertex. This list becomes longer and longer with patterns of complicated shape when $k$ increases because the slope of the adjacent edges can take more and more values. See for instance Figure \ref{figure patterns for k=3} that presents some patterns for $k=3$.
		
		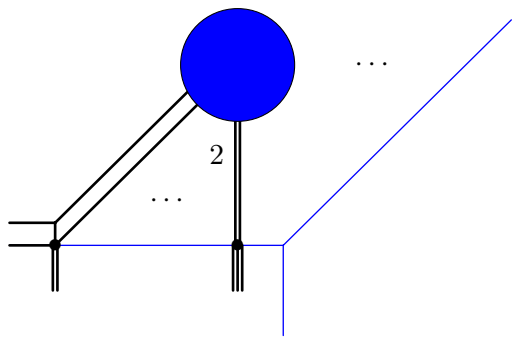
\begin{figure}
	\begin{center}
	\begin{tikzpicture}[line cap=round,line join=round,>=triangle 45,x=0.3cm,y=0.3cm]
\draw [line width=0.5pt,blue] (0,0)--++(10,10) (0,0)--++ (-12,0) (0,0)--++(0,-4);

\draw[line width=1pt] (-2,8)--++(-8,-8)--++(-2,0)++(2,0) node {$\bullet$} ++(0.1,0)--++(0,-2)++(-0.2,0)--++(0,2)++(0.1,0)--++(0,1)--++(-2,0)++(2,0)--++(8,8);

\draw[line width=1pt] (-2.1,8)-- node[midway,left] {$2$} ++(0,-8) ++(0.1,0) node {$\bullet$};
\draw[line width=1pt] (-1.9,8)--++(0,-8);
\draw[line width=1pt] (-2.2,0)--++(0,-2);
\draw[line width=1pt] (-2,0)--++(0,-2);
\draw[line width=1pt] (-1.8,0)--++(0,-2);

\draw (4,8) node {$\cdots$};
\draw (-5,2) node {$\cdots$};

\draw[fill=blue] (-2,8) circle [radius=2.5];
\end{tikzpicture}
	\caption{\label{figure patterns for k=3}Some patterns occurring for tropical curves contributing to $\ang{\psi^3 L}_d$ with a tropical line far from the points.}
	\end{center}
	\end{figure}

	\subsection{Case of two line constraints}
	
	We now consider the invariants $\ang{\psi L,\psi L}_d$ obtained by counting rational curves of degree $d$ subject to two $\psi L$-constraints and $3d-3$ $P$-constraints. Recall that now, it is possible to get curves where the two markings subject to a $\psi$-constraint are adjacent to the same vertex. Fortunately, this forces the markings to lie at the unique intersection point between the two line constraints. Thus, it just contributes a summand of $2\ang{\psi P}_d$ that can be considered separately. We now give a closed expression for these invariants in terms of relative invariants satisfying only $P$-constraints. Below, $N_d(2^2)$ is the number of tropical curves of degree $dL$ having two bottom ends of weight $2$ passing through a generic configuration of $3d-3$ points, counted with multiplicity $m_\Gamma=\prod m_V$.

	\begin{theo}\label{thm-2lines}
	We have the following formula:
	$$\boxed{ \begin{array}{r>{\displaystyle}l}
	\ang{\psi L,\psi L}_d= & 4	N_{dL-2E} + 2 N_{d}(2^2) + 4 N_{dL-E}(2) +2N_d \\
	 & + 10 N_{\square_d}+3N_d(3)+\sum_{d_1+d_2=d}\bino{3d-3}{3d_1-1}d_1N_{d_1}N_{d_2} \\
	 & + 2\ang{\psi P}_d. \\
	\end{array} }$$
	\end{theo}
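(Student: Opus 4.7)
The plan is to mimic the strategy used in Propositions \ref{prop-k=1} and \ref{prop-k=2}: I will choose both tropical lines $L$ and $L'$ with their vertices far from the $3d-3$ point configuration and enumerate the curves that can satisfy both $\psi L$-constraints simultaneously with the point constraints. As in the single-line case, the ``point region'' deformation argument behind Lemma \ref{lemma meeting the point region} forces strong restrictions on how each $\psi L$-marking can reach its far-away line.

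The total count naturally splits according to whether the two $\psi L$-markings are adjacent to the same vertex or to distinct vertices. The coincident case is handled by the general identity stated at the beginning of Subsection \ref{subsec-casetwolines}: by genericity, the shared vertex must lie at $L\cap L'$, producing $\frac{(k_1+k_2)!}{k_1!\,k_2!}\ang{\psi^{k_1+k_2-1}P}_d=2\ang{\psi P}_d$ with $k_1=k_2=1$, which is the last summand on the right-hand side. For the distinct case I apply Lemma \ref{lemma position points r=2} together with the analysis of the bipartite graph $\B(\Gamma)$: it has two white and two black vertices, is a forest since $\Gamma$ is a tree, and every vertex has degree at least one, so only two topologies can occur — two disjoint edges, or a path of length three. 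This dichotomy corresponds to the two remaining blocks of summands on the right.

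For the disjoint topology, each $\psi L$-vertex is the apex of a two-end string, exactly as in the proof of Proposition \ref{prop-k=1}, and the two strings are structurally independent. Each string admits the three patterns classified there (two ``exceptional-direction'' patterns and one ``weight-two'' pattern, as in Figure \ref{figure patterns for k=1}). Enumerating the joint combinations, inserting the local multiplicity factor $|\det(u_i,v_i)|$ from Proposition \ref{prop-mult2} at each $\psi L$-vertex, and correcting for the automorphism that exchanges two identical strings, I expect to recover the four summands $4N_{dL-2E}+2N_d(2^2)+4N_{dL-E}(2)+2N_d$.

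For the path topology, one $\psi L$-vertex lies at the interior of $\B(\Gamma)$ and is adjacent to both bounded components; the $k=1$ valency constraint then forces it to carry exactly one unbounded end, rather than a two-end string. The other $\psi L$-vertex remains a leaf of $\B(\Gamma)$ and carries the standard two-end string. The combined deformation toward the two far-away lines now couples, on one side, the usual string at the leaf with, on the other side, a ``chain'' passing through both bounded components that connects the interior $\psi L$-vertex to its single end; the local patterns occurring along this chain are in close analogy with those of Proposition \ref{prop-k=2} for $\ang{\psi^2 L}_d$ and produce configurations of degree $\square_d$, weight-three end configurations giving $N_d(3)$, and parallel-edge splittings giving the sum over $d_1+d_2=d$. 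The main obstacle is precisely this enumeration: correctly listing the local patterns at the interior vertex, combining them with the three string types at the leaf, distributing the $3d-3$ point constraints between the two bounded components via the binomial $\bino{3d-3}{3d_1-1}$, and tracking automorphism corrections so as to obtain the coefficients $10$, $3$, and $1$ of the three remaining summands. The computation can then be sanity-checked in small degree against Example \ref{ex-Psiline} (and, for $d=2,3$, against the recursion of Section \ref{sec-WDVV2}).
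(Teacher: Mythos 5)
Your overall strategy is exactly the paper's: both lines are placed far from the point region, the contribution $2\ang{\psi P}_d$ of coincident markings is split off, and the remaining solutions are divided into two cases. Your case split via the topology of $\B(\Gamma)$ (two disjoint edges versus a path) is a correct reformulation of the paper's split by deformation type of the $2$-dimensional family of curves through the points only: either two independent strings, or a single component of $\Gamma\backslash\P$ containing three ends (a ``$2$-dimensional string''). So the skeleton is sound.

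The genuine gap is that the enumeration which produces every coefficient in the formula is not carried out, and you acknowledge this yourself (``I expect to recover the four summands'', ``The main obstacle is precisely this enumeration''). For the disjoint case the paper classifies twelve realizable combined patterns (Figure \ref{figure patterns two psiL two strings}); note that not all $3\times 3$ products of single-string patterns occur, because the relative position of $L$ and $L'$ restricts which string can reach which line, and the multiplicities $|\det(u_i,v_i)|$ and automorphism factors must be computed pattern by pattern to get $4N_{dL-2E}+2N_d(2^2)+4N_{dL-E}(2)+2N_d$. More seriously, your framing of the path case as ``combining local patterns at the interior vertex with the three string types at the leaf'' suggests a product of independent local choices, but the two $\psi L$-vertices are joined by a common edge whose slope is pinned down by balancing at \emph{both} endpoints simultaneously; the paper instead directly classifies the eight connected global patterns (Figure \ref{figure patterns two psiL one strings}), whose completions into the point region have degrees $\square_d$, $dL$ with a weight-$3$ bottom end, or split as $d_1+d_2=d$, and whose vertex and automorphism factors ($1$, $\frac{2\cdot 2}{2}$, $\frac{3\cdot 2}{2}$, etc.) yield the coefficients $10$, $3$ and $1$. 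Following your product-style recipe naively would miscount these connected configurations, so this step would need to be replaced by the direct classification before the proof is complete.
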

	
\begin{proof}
We compute the invariant by choosing a configuration of constraints as follows: the point constraints are close to each other and far from the vertices of the line constraints which are south-east of the point region. What happens inside the point region, depicted as a blue disk in the Figures, is not relevant: it just consists in rational tropical curves of some degree satisfying the $P$-constraints. The last term of the formula is just the contribution of the curves where the two $\psi$-marking are adjacent to the same vertex.

As we have two $\psi L$-constraints, the set of degree $d$ rational curves satisfying only the point constraints form a $2$-dimensional family. We have two possibilities:
\begin{itemize}[label=-]
\item either we have two strings that can deform independently: the complement of marked points has two connected components each containing two ends,
\item or we have a ``$2$-dimensional string" meaning a unique component of the complement of marked points contains three ends.
\end{itemize}
The strings are fixed by the two $\psi L$-constraints. As the lines have been chosen far from the point region, we only care about the combinatorial types for which the deformation of the strings is not bounded. In the case of two $1$-dimensional strings, such deformations have been described in \cite{GM053}. For a $2$-dimensional string, the deformations are depicted in Figure \ref{figure patterns two psiL one strings}. We now study the two cases.

\begin{itemize}[label=$\circ$]
\item First, concerning the curve with two strings, the patterns are presented on Figure \ref{figure patterns two psiL two strings}. For each of the string, its unique moving vertex has to land on one of the line constraints.
	\begin{itemize}[label=-]
	\item The patterns presented on $(a1)$, $(a1')$, $(a2)$ and $(a2')$ each contribute for $N_{dL-2E}$ because there are $N_{dL-2E}$ tropical curves that we can draw inside the blue region having infinite edges of the right slope that meet the line constraints where we can cut it and place a vertex, as depicted on Figure \ref{figure patterns two psiL two strings}.
	\item The patterns presented on $(b1)$ and $(b1')$ each contribute for $N_{d}(2^2)$. This accounts for the $2$ coming from the determinant between the bounded edges adjacent to the $\psi L$ and $\psi L'$ vertices, and the $\frac{1}{2}$ for the automorphisms of each of the pairs of ends.
	\item The patterns presented on $(b2)$, $(b2')$, $(c2)$ and $(c2')$ each contribute for $N_{dL-E}(2)$.
	\item The patterns presented on $(c1)$ and $(c1')$ each contribute for $N_{dL-E_1-E_2}=N_d$.
	\end{itemize}
The total contribution is
$$4	N_{dL-2E} + 2 N_{d}(2^2) + 4 N_{dL-E}(2) +2N_d.$$
\item Next, we compute the contribution of the curves with a $2$-dimensional string. The possible shapes of $2$-dimensional strings are depicted on Figure \ref{figure patterns two psiL one strings}. We have $8$ possible shapes. The three shapes on the right are the symmetric of the shapes on the left.
	\begin{enumerate}[label=(\alph*)]
	\item The first pattern is completed by a degree $\square_d$ curve inside the blue region. The contribution coming from the $\psi L$-vertices to the multiplicity is just $1$, so the contribution is $N_{\square_d}$ (times $2$ for the symmetric).
	\item The second pattern is completed by a curve of degree $\square'_d$ (which is $\square_d$ with coordinates switched). This time, the contribution of the $\psi L$-vertices and automorphisms is $\frac{2\cdot 2}{2}=2$, so we get $2N_{\square_d}$ (times $2$ for the symmetric).
	\item For the third pattern, we get $2N_{\square_d}$ (times $2$ for the symmetric).
	\item For the fourth pattern, we have two parallel edges leaving the vertex. Considering the position of the marked points on the curve, only one of the edges goes into a bounded component. Thus, the picture is completed by two tropical curves $C_1$ and $C_2$ of respective degrees $d_1+d_2=d$. The first curve $C_1$ is subject to $P$-constraints and gives $\bino{3d-3}{3d_1-1}\cdot N_{d_1}$ solutions. One of its $d_1$ intersection points with the line yields the pattern, and $C_2$ is thus subject to the remaining $3d_2-2$ $P$-constraints, and has to pass through the intersection point with the line, yielding $N_{d_2}(1)=N_{d_2}$. In total, we get:
	$$\sum_{d_1+d_2=d}\bino{3d-3}{3d_1-1}d_1N_{d_1}N_{d_2}.$$
	The contribution of $\psi L$-vertices and automorphisms is $\frac{2}{2}=1$.
	\item For the last pattern, it is completed by a curve of degree $d$ having a bottom end of weight $3$. The contribution of the $\psi L$-points and automorphisms is $\frac{3\cdot 2}{2}=3$.
	\end{enumerate}
	In total, the curves with a $2$-dimensional string give
	$$10 N_{\square_d}+3N_d(3)+\sum_{d_1+d_2=d}\bino{3d-3}{3d_1-1}d_1N_{d_1}N_{d_2}.$$
\end{itemize}
\end{proof}


\newcommand{\ConstrDoubleString}{\clip(-14,-6) rectangle (14,12);
\draw [line width=0.5pt,blue] (0,0)--++(10,10) (0,0)--++ (-12,0) (0,0)--++(0,-8);
\draw [line width=0.5pt,blue] (2,-2)--++(11,11) (2,-2)--++ (-14,0) (2,-2)--++(0,-8);
\draw[fill=blue] (-2,8) circle [radius=2.5];}


\newcommand{\stral}{\draw [line width=1pt] (-11,0)--++(-2,0) (-11,0)--++(0,-3) (-11,0)--++(7.2,7.2)}
\newcommand{\strbl}{\draw [line width=1pt] (-12,-2)--++(-2,0) (-12,-2)--++(0,-3) (-12,-2)--++(8.8,8.8)}

\newcommand{\strac}{\draw [line width=1pt,double] (-2,0)--++(0,-4);
\draw [line width=1pt,thick] (-2,0)-- node[midway,right] {$2$} ++(0,6)}
\newcommand{\strbc}{\draw [line width=1pt,double] (-2,-2)--++(0,-2);
\draw [line width=1pt,thick] (-2,-2)-- node[midway,right] {$2$} ++(0,8)}

\newcommand{\strar}{\draw [line width=1pt] (8,8)--++(0,-3) (8,8)--++(2,2) (8,8)--++(-8,0)}
\newcommand{\strbr}{\draw [line width=1pt] (12,8)--++(0,-3) (12,8)--++(2,2) (12,8)--++(-12,0)}

\begin{figure}
\begin{center}
\begin{tabular}{cc|cc}
\begin{tikzpicture}[line cap=round,line join=round,>=triangle 45,x=0.125cm,y=0.125cm]
\stral;
\draw [line width=1pt] (-10,-2)--++(-2,0) (-10,-2)--++(0,-3) (-10,-2)--++(7.5,7.5);
\ConstrDoubleString
\end{tikzpicture} & \begin{tikzpicture}[line cap=round,line join=round,>=triangle 45,x=0.125cm,y=0.125cm]
\draw [line width=1pt] (-8,0)--++(-4,0) (-8,0)--++(0,-3) (-8,0)--++(5.5,5.5);
\strbl;
\ConstrDoubleString
\end{tikzpicture} & \begin{tikzpicture}[line cap=round,line join=round,>=triangle 45,x=0.125cm,y=0.125cm]
\strar;
\draw [line width=1pt] (10,6)--++(2,2) (10,6)--++(0,-3) (10,6)--++(-10,0);
\ConstrDoubleString
\end{tikzpicture} & \begin{tikzpicture}[line cap=round,line join=round,>=triangle 45,x=0.125cm,y=0.125cm]
\strbr;
\draw [line width=1pt] (6,6)--++(3,3) (6,6)--++(0,-3) (6,6)--++(-6,0);
\ConstrDoubleString
\end{tikzpicture} \\
$(a1)$ & $(a1')$ & $(a2)$ & $(a2')$ \\
\hline
\begin{tikzpicture}[line cap=round,line join=round,>=triangle 45,x=0.125cm,y=0.125cm]
\strac;
\draw [line width=1pt,double] (-4,-2)--++(0,-2);
\draw [line width=1pt,thick] (-4,-2)-- node[midway,left] {$2$} ++(0,10);
\ConstrDoubleString
\end{tikzpicture} & \begin{tikzpicture}[line cap=round,line join=round,>=triangle 45,x=0.125cm,y=0.125cm]
\strbc;
\draw [line width=1pt,double] (-4,0)--++(0,-4);
\draw [line width=1pt,thick] (-4,0)-- node[midway,left] {$2$} ++(0,8);
\ConstrDoubleString
\end{tikzpicture} & \begin{tikzpicture}[line cap=round,line join=round,>=triangle 45,x=0.125cm,y=0.125cm]
\stral;\strbc;
\ConstrDoubleString
\end{tikzpicture} & \begin{tikzpicture}[line cap=round,line join=round,>=triangle 45,x=0.125cm,y=0.125cm]
\strbl;\strac;
\ConstrDoubleString
\end{tikzpicture} \\
$(b1)$ & $(b1')$ & $(b2)$ & $(b2')$ \\
\hline
\begin{tikzpicture}[line cap=round,line join=round,>=triangle 45,x=0.125cm,y=0.125cm]
\stral;\strbr;\ConstrDoubleString
\end{tikzpicture} & \begin{tikzpicture}[line cap=round,line join=round,>=triangle 45,x=0.125cm,y=0.125cm]
\strbl;\strar;\ConstrDoubleString
\end{tikzpicture} & \begin{tikzpicture}[line cap=round,line join=round,>=triangle 45,x=0.125cm,y=0.125cm]
\strar;\strbc;\ConstrDoubleString
\end{tikzpicture} & \begin{tikzpicture}[line cap=round,line join=round,>=triangle 45,x=0.125cm,y=0.125cm]
\strbr;\strac;\ConstrDoubleString
\end{tikzpicture} \\
$(c1)$ & $(c1')$ & $(c2)$ & $(c2')$ \\
\end{tabular}
\caption{\label{figure patterns two psiL two strings} Shape of the curves contributing to $\ang{\psi L,\psi L}$ with two strings.}
\end{center}
\end{figure}

\begin{figure}
\begin{center}
\begin{tikzpicture}[line cap=round,line join=round,>=triangle 45,x=0.35cm,y=0.35cm]
\draw [line width=0.5pt,blue] (0,0)--++(14,14) (0,0)--++ (-24,0) (0,0)--++(0,-10);
\draw [line width=0.5pt,blue] (1,-2)--++(14,14) (1,-2)--++ (-27,0) (1,-2)--++(0,-6);
\draw[blue,fill=blue] (-4,10) circle [radius=4];

\draw[line width=1pt] (-22,0)--++(-2,0) (-22,0)--++(-2,-2)--++(-2,0)++(2,0)--++(0,-2) (-22,0)--++(4,2);
\node (A) at (-22,-6) {(a)};

\draw[line width=1pt] (-16,0)--++(-2,0) ++(2,0)-- node[midway,right] {$2$} ++(0,-2);
\draw[line width=1pt,double] (-16,-2)--++(0,-2);
\draw[line width=1pt] (-16,0)--++(1,2);
\node (B) at (-16,-6) {(b)};

\draw[line width=1pt] (-10,0)--++(0,-4) (-10,0)--++(-2,-2)--++(-2,0)++(2,0)--++(0,-2) (-10,0)--++(1,2);
\node (C) at (-10,-6) {(c)};

\draw[line width=1pt,double] (-6,2)--++(0,-2)++(0,-2)--++(0,-2);
\draw[line width=1pt] (-6,0)-- node[midway,right] {$2$} ++(0,-2);
\node (D) at (-6,-6) {(d)};

\draw[line width=1pt] (-3,0)-- node[midway,right] {$3$} ++(0,2);
\draw[line width=1pt] (-2.85,0)-- node[midway,right] {$2$} ++(0,-2);
\draw[line width=1pt] (-3.15,0)--++(0,-4);
\draw[line width=1pt,double] (-2.85,-2)--++(0,-2);
\node (E) at (-3,-6) {(e)};

\draw[line width=1pt] (3,3)--++(-2,2)++(2,-2)--++(2,2)++(-2,-2)-- node[midway,right] {$2$} ++(0,-3);
\draw[line width=1pt,double] (3,0)--++(0,-2);

\draw[line width=1pt] (6,6)--++(-2,2)++(2,-2)--++(0,-3)++(0,3)--++(3,0)--++(2,2)++(-2,-2)--++(0,-2);

\draw[line width=1pt] (12,12)--++(-4,-2)++(4,2)--++(2,2)++(-2,-2)--++(3,0)--++(2,2)++(-2,-2)--++(0,-2);
\end{tikzpicture}
\caption{\label{figure patterns two psiL one strings} Shape of the curves contributing to $\ang{\psi L,\psi L}$ with one string.}
\end{center}
\end{figure}

\begin{expl}\label{ex-twolines1}
Take $d=2$. The formula from Theorem \ref{thm-2lines} gives
\begin{align*}
\ang{\psi L,\psi L}_2= & 0+0+0+2 \\
	& + 10 + 0 + \bino{3}{2}1\cdot 1\cdot 1 \\
	& + 2\cdot 1 \\
	= & 15+2=17 .\\
\end{align*}
We can also recover it directly by finding the solutions.
\end{expl}

\begin{expl}\label{ex-twolines2}
For $d=3$, the formula from Theorem \ref{thm-2lines} yields
\begin{align*}
\ang{\psi L,\psi L}_3= & 4+0+4\cdot 16+2\cdot 12 \\
	& + 10\cdot 10 + 3\cdot 21 + \bino{6}{2}1\cdot 1\cdot 1+\bino{6}{5}2\cdot 1\cdot 1 \\
	& + 2\cdot 10 \\
	= & 282+20 =302.\\
\end{align*}
\end{expl}

\section{WDVV equation for $\ang{\psi^k L}$}\label{sec-WDVV1}

In this section we prove a WDVV-type equation for the invariants $\ang{\psi^k L}_{d}$. The method to obtain such a formula is similar to the one used in \cite{GM053} to prove Kontsevich's formula: the formula is obtained by considering a suitable evaluation map coupled with a forgetful morphism to $\M_{0,4}$, and intersect it with two different set of constraints, obtaining an equality for which one side contains our wanted invariant. We consider the following evaluation map
$$f:\M_{0,n+3}(\RR^2,d)\cap\psi_1^k\longrightarrow \RR^2\times\RR^2\times\RR^2\times(\RR^2)^n\times\M_{0,4}.$$
We aim to intersect its image with a cycle of the form $\Xi=L\times L_1\times L_2\times\{P_1\}\times\cdots\times\{P_n\}\times\{\lambda\}$, where $L,L_1$ and $L_2$ are generic tropical lines, $P_1,\dots,P_n$ are generic points inside $\RR^2$ and $\lambda\in\M_{0,4}$ is a very big cross-ratio inside some combinatorial type of $\M_{0,4}$. We choose $n$ such that $n+k=3d-2$, so that the domain has dimension equal to the codimension of $\Xi$. The marked points remembered by the forgetful map are the markings associated to $L_1,L_2,L$ and $P_1$. We care about the combinatorial types $L_1L_2//LP$ and $L_1P//L_2L$. In \cite{GM053}, the big cross-ratio is enough to ensure that the curve corresponding to the intersection points between $\Xi$ and the image of $f$ acquires a contracted edge. Unfortunately, this is not the case in our computation and the formula thus witnesses the appearance of a correcting term to palliate this problem.

	\subsection{Curves with a big cross-ratio}

As advertised, we wish to compute the intersection between the image of $f$ and $\Xi$ for two big choices of cross-ratio. The cycle for the combinatorial type $L_1L_2//LP$ is denoted $\Xi_A$ and the cycle for the combinatorial type $L_1P//L_2L$ is denoted by $\Xi_B$. If we do not specify the cycle, we just write $\Xi$. The following lemma describes the curves corresponding to the intersection points. We denote by $Q$ the marking subject to the $\psi^k L$-constraint.

	\begin{lem}\label{lemma shape of curves for big cross-ratio r=1}
	In the setting described above, if the cross-ratio $\lambda$ is big enough, a curve in $f(\M_{0,n+3}(\RR^2,d)\cap\psi_Q^k)\cap\Xi$ satisfies one of the following:
	\begin{enumerate}[label=(\roman*)]
	\item it has a contracted edge contributing to the cross-ratio, and the image can be written as the union of two tropical curves,
	\item the $\psi^k L$-point lies on some end of $L$, and the adjacent connected components of $\Gamma\backslash\P$ are unbounded. Assuming the end is the bottom one (by symmetry), they are of one of the following types:
		\begin{enumerate}[label=(\alph*)]
		\item an end of slope $(0,-1)$ or $(1,1)$,
		\item a string going from an end of slope $(-1,0)$ to $Q$ with slope $(1,-w)$, each adjacent edge is vertical and is completed in a degree $d_i$ curve,
		\item a vertical edge  of slope $(0,w)$, completed in a tropical curve with a bottom end of weight $w$.
		\end{enumerate}
		In the case of $\Xi_A$, the markings corresponding to $L_1$ and $L_2$ lie in the same component, which is not the one in which $P_1$ lies.
		In the case of $\Xi_B$, the marking $L_1$ lies in the same component as $P_1$, and the marking associated to $L_2$ is either in a different component, or on the string.
	\end{enumerate}
	\end{lem}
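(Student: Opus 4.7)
The plan is to analyse, for a curve $(h,\Gamma)$ in the preimage, how the cross-ratio coordinate $\lambda \in \M_{0,4}$ depends on the combinatorial type and the edge lengths in $\Gamma$. By definition of $\mathrm{ft}_4$, the image of $(h,\Gamma)$ in $\M_{0,4}$ is determined by the (signed) length of the unique bounded path in the stabilization of $\Gamma$ joining the midpoints of the four remembered marked ends, namely the ones for $L$, $L_1$, $L_2$ and $P_1$. For $\lambda$ to become arbitrarily large while staying inside one cone of $\M_{0,n+3}(\RR^2,d)\cap\psi_Q^k$, some edge along this path must admit unboundedly large length while preserving the $n+3$ evaluation conditions and the $\psi^k$ valency condition at $Q$. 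Since the domain and codomain have equal dimension, a generic preimage is isolated; the existence of an unbounded direction therefore forces one of two specific degenerations.

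First, suppose some bounded edge $e$ on the cross-ratio path is contracted by $h$. Then $e$ maps to a single point of $\RR^2$, its length may be stretched without modifying $\mathrm{ev}$, and $\Gamma$ splits along $e$ into two parametrized rational tropical curves meeting at this image point. This is case (i); the partition of the four remembered marks between the two sides is forced to match the combinatorial type of $\lambda$, namely $L_1,L_2$ versus $L,P_1$ for $\Xi_A$ and $L_1,P_1$ versus $L_2,L$ for $\Xi_B$.

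Now assume no edge of the cross-ratio path is contracted. The long edge is then a genuine edge of $h$ and must grow in a one-parameter family of solutions. By Lemma \ref{lemma position points k=1}, $\Gamma\setminus\P$ consists of a unique bounded component containing the marking $Q$ plus unbounded components each carrying a single end. The $P$-constraints together with the two line constraints $L_1,L_2$ and the fixed valency at $Q$ leave at most one free degree of freedom, which is the sliding of $Q$ along $L$. A bounded deformation produces bounded edge lengths; unbounded growth of the path forces $Q$ to escape to infinity along one of the three ends of $L$. By symmetry we may assume this is the bottom end, of direction $(0,-1)$.

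It remains to describe the local picture around $Q$ once $Q$ has been pushed arbitrarily far down along the bottom end of $L$. An unbounded component adjacent to $Q$ must be compatible with this translation: its point constraints are fixed, yet its attachment point at $Q$ moves vertically. A case analysis of the balancing condition at $Q$ and at the nearby trivalent vertex, using that each such component has exactly one end, then yields the three possibilities listed: a single end of slope $(0,-1)$ or $(1,1)$ (case~(a)); a string that leaves $Q$ with slope $(1,-w)$ and terminates on an end of slope $(-1,0)$ with vertical branches each completed into a degree $d_i$ subcurve (case~(b)); or a vertical edge of weight $w$ completed into a subcurve with a bottom end of weight $w$ (case~(c)). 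Finally, the statement about the placement of $L_1,L_2,P_1$ reduces to checking that the path in $\Gamma$ from $L$ to $P_1$ (respectively, from $L_2$ to $L$) must contain the long segment produced by the degeneration, which constrains the $L_1$ and $L_2$ markings to the components stated for $\Xi_A$ and $\Xi_B$. The main obstacle in executing this plan is the combinatorial enumeration of case~(ii): one needs to rule out any other local configuration at $Q$ compatible with a downward escape, which requires a careful use of balancing together with the single-end property of the adjacent components.
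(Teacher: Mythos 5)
Your proposal follows the paper's overall strategy --- the dichotomy between a contracted edge whose length is fixed by the cross-ratio (case (i)) and the marking $Q$ escaping to infinity along an end of $L$ (case (ii)) --- but there are two genuine gaps.

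First, in the non-contracted case you assert that the constraints ``leave at most one free degree of freedom, which is the sliding of $Q$ along $L$.'' This is not automatic and in fact is false in general: with one point condition missing, the one-parameter family can also arise from a component of $\Gamma\setminus\P$ containing two ends, i.e.\ a string in the sense of \cite{GM053} that does not touch $Q$ at all. You also invoke Lemma \ref{lemma position points k=1} directly, but that lemma is stated for the rigid situation with the full set of point constraints; here one constraint is missing, so the paper first adds an artificial generic point to apply it, then removes it and splits into the two cases ``a component with two ends'' versus ``no bounded component.'' The first of these must then be explicitly discarded by quoting the results of \cite{GM053} on which string deformations are unbounded in $\M_{0,4}$ (only strings made of two ends with a single adjacent bounded edge, which are then excluded because the cross-ratio condition does not fix such a curve). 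Your argument silently assumes this whole case away.

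Second, the substance of the lemma is the classification (a)/(b)/(c) of the components adjacent to $Q$, and you explicitly defer it, naming it ``the main obstacle.'' Balancing at $Q$ together with the single-end property is not sufficient on its own: the paper's case analysis on the slope $(u,v)$ of a string at $Q$ (ruling out $|u|\geqslant 2$, forcing the adjacent edges to be vertical when $u=\pm 1$, etc.) rests on the auxiliary Lemma \ref{lemma meeting the points}, which states that any bounded edge deformed in the family either lies on a string or is attached to one with its other extremity inside the point region, combined with the choice of $L$ far to the south-east of the point region. Without identifying and proving such a statement, the enumeration of local configurations at $Q$ cannot be completed, so the proposal stops short of establishing the lemma precisely where the real work lies.
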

	
	All these possibilities are depicted on Figure \ref{figure shape of strings for big cross-ratio}. A global deformation is obtained by taking a collection of patterns of type (a), (b) or (c) as in Figure \ref{figure shape of strings for big cross-ratio}.
	
	\begin{figure}
	\begin{center}
	\begin{tabular}{ccc}
	\begin{tikzpicture}[line cap=round,line join=round,>=triangle 45,x=0.65cm,y=0.65cm]
\draw[line width=1pt,blue] (0,0)--++(-3,0)++(3,0)--++(3,3)++(-3,-3)--++(0,-6)--++(0,-2);
\draw (0,-6) node {$\bullet$} node[below left] {$Q$};
\draw (0,-7) node {$\circ$};
\draw[blue,fill=blue] (-1.5,1.5) circle[radius=1];
\draw[line width=1pt] (0,-6)--++(3,3);
\draw[line width=1pt,dotted] (0,-7)--++(3,3);
\end{tikzpicture} & \begin{tikzpicture}[line cap=round,line join=round,>=triangle 45,x=0.65cm,y=0.65cm]
\draw[line width=1pt,blue] (0,0)--++(-3,0)++(3,0)--++(3,3)++(-3,-3)--++(0,-6)--++(0,-2);
\draw (0,-7) node {$\bullet$} node[below right] {$Q$};
\draw (0,-8) node {$\circ$};
\draw[blue,fill=blue] (-1.5,1.5) circle[radius=1];
\curve (1) at (-1,1) {$\Gamma_{i_3}$};
\curve (2) at (-2,0) {$\Gamma_{i_2}$};
\curve (3) at (-3,1) {$\Gamma_{i_1}$};
\draw[line width=1pt] (0,-7)--++(-1,3)--++(-1,2)--++(-1,1)--++(-2,0);
\draw[line width=1pt,dotted] (0,-8)--++(-1,3)--++(-1,2)--++(-1,1)--++(-2,0);
\draw (1) to (-1,-4);
\draw (2) to (-2,-2);
\draw (3) to (-3,-1);
\draw[dotted] (-1,-5) to (-1,-4);
\draw[dotted] (-2,-3) to (-2,-2);
\draw[dotted] (-3,-2) to (-3,-1);
\end{tikzpicture} & \begin{tikzpicture}[line cap=round,line join=round,>=triangle 45,x=0.65cm,y=0.65cm]
\draw[line width=1pt,blue] (0,0)--++(-3,0)++(3,0)--++(3,3)++(-3,-3)--++(0,-6)--++(0,-2);
\draw (0,-6) node {$\bullet$} node[below right] {$Q$};
\draw (0,-7) node {$\circ$};
\draw[blue,fill=blue] (-1.5,1.5) circle[radius=1];
\draw[line width=1pt,dotted] (0,-7)--++(0,1);
\curve (1) at (0,1) {$\Gamma_j$};
\draw[line width=1pt] (0,-6) to (1);
\end{tikzpicture} \\
(a) & (b) & (c) \\
	\end{tabular}
	\caption{\label{figure shape of strings for big cross-ratio}The possible shapes of strings emanating from $Q$ along with a small deformation.}
	\end{center}
	\end{figure}
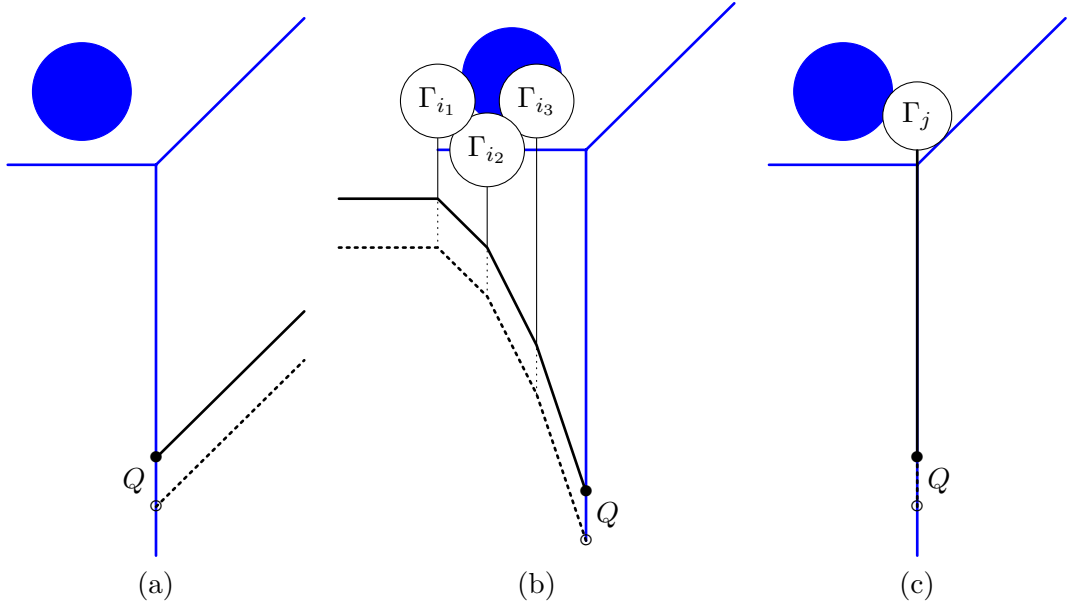
	
	\begin{proof}
	We consider a tropical solution. We momentarily forget about the $\M_{0,4}$-condition, and the marked points corresponding to the lines $L_1,L_2$ as well (they can be added afterwards), getting a  curve in $\M_{0,n+1}(\RR^2,d)\cap\psi_Q^k$ satisfying the $P$-constraints and the $\psi^k L$-constraint. The solution can now be deformed in a $1$-parameter family of curves, which are subject to point constraints and a $\psi^k L$-constraint. We precisely lack one point constraint to get a finite number of solutions.
	
	\begin{enumerate}[label=(\roman*)]
	\item Assume the combinatorial type has a contracted edge: its length can vary without affecting the point constraints. Adding back the $L_1,L_2$ and cross-ratio constraints, by genericity, the length needs to be fixed by the cross-ratio constraint. Removing the contracted edge from the parametrization, the tropical curve is split into two tropical curves. This finishes the first part of the statement.
	
	\item We now assume there is no contracted edge. We wish to describe the position of the point constraints on the curve. We artificially add a generic point constraint, so that the initial solution can no longer be deformed, and the position of points on the curve is now described by Lemma \ref{lemma position points k=1}. Removing this additional point constraint, the position of the original point constraints on the curve is one of the following cases:
	\begin{enumerate}[label=(\Alph*)]
	\item \textbf{The complement of marked points has a component with two ends.} This occurs if the additinal point constraint is not adjacent to the bounded component whose existence is ensured by Lemma \ref{lemma position points k=1}. In that case, the deformation given by the $1$-parameter family is obtained by deforming the string linking both ends inside the component as in \cite{GM053}. Results from \cite{GM053} ensure that for such combinatorial types, the image under $\mathrm{ft}_4$ inside $\M_{0,4}$ is either bounded, or the string is formed only by two ends with a unique adjacent bounded edge. Combinatorial types of the latter form do not contribute any solution as the cross-ratio condition is not sufficient to fix the curve. In any case, for $\lambda$ big enough, there is no solution.
	\item \textbf{There is no bounded component in the complement of marked point.} This case occurs if the additional point constraint was adjacent to the bounded component whise existence is ensured by Lemma \ref{lemma position points k=1}. In that case, every adjacent component to $Q$ has a unique end, and the deformation is obtained by moving $Q$ along the line $L$. If the position of $Q$ is bounded, then so is the cross-ratio. We thus furthermore assume that it is possible to send $Q$ to infinity along the end of $L$ on which it lies, and deforming the ``strings" (\textit{i.e.} paths from $Q$ to the ends inside each component) along with it. By symmetry, assume the end of $L$ on which the $\psi^k L$-point lies is the bottom one. Let us consider a string from the $\psi^k L$-point to an end of $\Gamma$. We now describe the shape of the string according to its slope at $Q$. To do this, we use the following lemma.
	\begin{lem}\label{lemma meeting the points}
	Let $h:\Gamma\to\RR^2$ be a tropical curve meeting the $P$-constraints and the $\psi^k L$-constraint in the setting described at the beginning of Section \ref{sec-WDVV1}, but not the cross-ratio constraint. Assume the complement of marked points has only unbounded components with a unique end, leading to a $1$-parameter family of solutions. Let $\gamma$ be a bounded edge of $\Gamma$ that is deformed in the $1$-parameter family. Then:
		\begin{enumerate}[label=(\Roman*)]
		\item either $\gamma$ lies on some string of $\Gamma$,
		\item or $\gamma$ is adjacent to some string, and its extremity not in the string lies in the point region.
		\end{enumerate}
	\end{lem}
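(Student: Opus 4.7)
The plan is to use that the $1$-parameter family is generated by sliding the $\psi^k L$-marking $Q$ along $L$ while the $P$-markings remain fixed, and then to track which edge lengths are forced to vary. The key input is a rigidity statement for sub-trees of $\Gamma$ whose leaves are all $P$-vertices.

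I would first show that any component $C$ of $\Gamma\setminus\P$ that is \emph{not} adjacent to $Q$ is rigid. Its boundary consists solely of removed $P$-vertices at fixed positions plus a unique unbounded end of fixed direction. Writing $V_C$ for the number of trivalent interior vertices of $C$ and $b_C$ for the number of boundary half-edges to $P$-vertices, the trivalence condition together with the tree identity forces $V_C=b_C-1$, and the resulting $2b_C$ parameters (an anchor vertex position together with all edge lengths of $C$) are uniquely determined by the $2b_C$ scalar equations coming from the fixed $P$-positions. Consequently no edge of such a $C$ can be deformed, so $\gamma$ must already lie in a component adjacent to $Q$.

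Let $C$ be this $Q$-adjacent component and $S\subset C$ the unique path from the vertex of $C$ adjacent to $Q$ to the vertex supporting the unique end of $C$, i.e.\ the string from $Q$ in that direction. If $\gamma\subset S$ we are in case (I). Otherwise $\gamma$ lies in a sub-tree $T$ branching off $S$ at a vertex $v_0\in S$ through a single edge $f=(v_0,w_0)$, and the leaves of $T$ are necessarily $P$-vertices because $C$'s only end is already carried by $S$. Applying the same rigidity count to $T$ (with $v_0$ playing the role that the end played for $C$) pins down the positions of every interior vertex of $T$ and the lengths of every edge internal to $T$ or going to a $P$-vertex; in particular $w_0$ is fixed, and it lies in the point region since its position is forced by the $P$-markings of $T$. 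As $Q$ slides along $L$ and drags $v_0$ along $S$, the only length in $T$ that can change is that of $f$; hence $\gamma=f$ and we are in case (II).

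The delicate point is justifying the rigidity count, which amounts to non-singularity of the $2b\times 2b$ linear system attached to such a sub-tree. This follows from the genericity of the $P$-constraints, which guarantees that at each trivalent vertex of $T$ (or $C$) the two already-known neighbour positions, together with the fixed outgoing slopes, intersect transversally in a unique point. Equivalently, one can peel $P$-leaves of $T$ off one at a time, each step fixing one further interior vertex by this intersection, until the entire sub-tree is pinned down. This inductive pruning is what I expect to be the main, though essentially routine, technical obstacle.
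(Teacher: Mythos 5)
Your overall architecture is sound and parallels the paper's: the one-parameter family is generated by sliding $Q$ along $L$ together with the strings; every component of $\Gamma\backslash\P$ not adjacent to $Q$ is rigid; and within a $Q$-adjacent component a deformed bounded edge is either on the string or is the unique edge $f=(v_0,w_0)$ attaching a subtree $T$ whose leaves are all $P$-markings. Your leaf-pruning rigidity count makes explicit what the paper merely asserts ("every other edge is fixed by the point conditions"), and that part is correct.

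The gap is in the final clause of (II). You conclude that $w_0$ "lies in the point region since its position is forced by the $P$-markings of $T$". That inference is not valid as stated: a vertex whose position is \emph{determined} by the constraints need not lie near them -- the intersection of two lines of prescribed slopes through two nearby points can be arbitrarily far from both points when the slopes are close. This clause is the actual geometric content of (II), and it is exactly what the subsequent classification of strings (cases (a)--(e) in the proof of Lemma~\ref{lemma shape of curves for big cross-ratio r=1}) relies on. The paper proves it by a half-plane argument: if the classical line spanned by $h(\gamma)$ missed the point region, the open half-plane not containing the point region would contain no marked points, and following $\Gamma$ into it via balancing would produce a second unbounded end in the component, contradicting the hypothesis that each component has a unique end; iterating this at the trivalent extremity of $\gamma$ forces that extremity into the point region. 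Your argument can be repaired either by inserting this second-end argument, or by noting that all slopes of $h$ are bounded in terms of $d$, so every vertex position determined by the $P$-markings lies in a bounded (in $d$) neighbourhood of the point configuration, which can be absorbed into the point region once the latter is taken small and far from $L$. As written, however, the step is unjustified.
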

	
	\begin{proof}
	The description of the deformation by moving the strings ensures that any moving edge either belong to one of the strings, or is adjacent to it: every other edge is fixed by the point conditions. We now assume $\gamma$ is adjacent to a string. The line containing $\gamma$ splits $\RR^2$ in two. If the point region was contained in one of the two halves, the other half would contain an unbounded end and no marked point, leading to a second end in the connected component of $\Gamma\backslash\P$ containing $\gamma$, which is absurd. So the line spanned by $\gamma$ passes through the point region. If the extremity of $\gamma$ was not in the point region, we get a trivalent vertex and can do the same reasoning with the adjacent edges to get another end in the component. Thus, the extremity has to lie in the point region.
	\end{proof}
	
	We now describe the strings. The corresponding pictures ca be found in Figure \ref{figure slope of the edges adjacent to strings}. In each case, we consider the component cut at the marked points.
		\begin{enumerate}[label=(\alph*)]
		\item If the slope of the string at $Q$ is $(u,v)$ with $u\geqslant 2$, by balancing, the component needs to have at least two ends of slope $(1,1)$, which is impossible because there is only one end.
		\item Assume the slope is $(1,v)$. Assume there is a vertex $V$ on the string, and let $(a,b)$ be the slope of the edge adjacent to the string. By Lemma \ref{lemma meeting the points}, we must have $a\leqslant 0$ since its extremity lies in the point region. If we had $a<0$, the balancing would force the component to contain at least two ends of slope $(1,1)$, which is impossible because there is only one end. Hence, $a=0$. However, for a vertical edge, all the marked points are on its left since the line $L$ has been chosen south-east of the point region. Thus, there cannot be any vertex on the string, which is thus only an end of slope $(1,1)$.
		\item If the slope is $(0,v)$, it completes into a tropical curve of some degree with a bottom end of weight $v$.
		\item If the slope is $(u,v)$ with $u\leqslant -2$, the component would contain at least two ends of slope $(-1,0)$, which is impossible because there is only one end.
		\item If the slope is $(-1,v)$, let $V$ be a vertex on the string. Let $(a,b)$ be the slope of the edge adjacent to the string at $V$, which is a bounded edge. By Lemma \ref{lemma meeting the points}, we must have $a\geqslant 0$. If $a>0$, as before, we would have at least two ends of slope $(-1,0)$ in the component, which is impossible. So we must have $a=0$. the bounded edge is thus completed by a tropical curve with a bottom end of weight $b$ in the point region.
		\end{enumerate}
	We need to finish with the description of the position of the $L_1$-marking, $L_2$-marking and $P_1$-marking. As it lies in the point region, the $P_1$-marking has to belong to some component $\Gamma_i$. The $L_1$ and $L_2$-marking can either belong to a component $\Gamma_i$ or to the string that deforms to infinity. It is easy to check that the only possibility of dispatching these conditions that leads to a non-constant cross-ratio in each case ($\Xi_A$ or $\Xi_B$) is as given in the lemma.
	\end{enumerate}
	\end{enumerate}
	\end{proof}

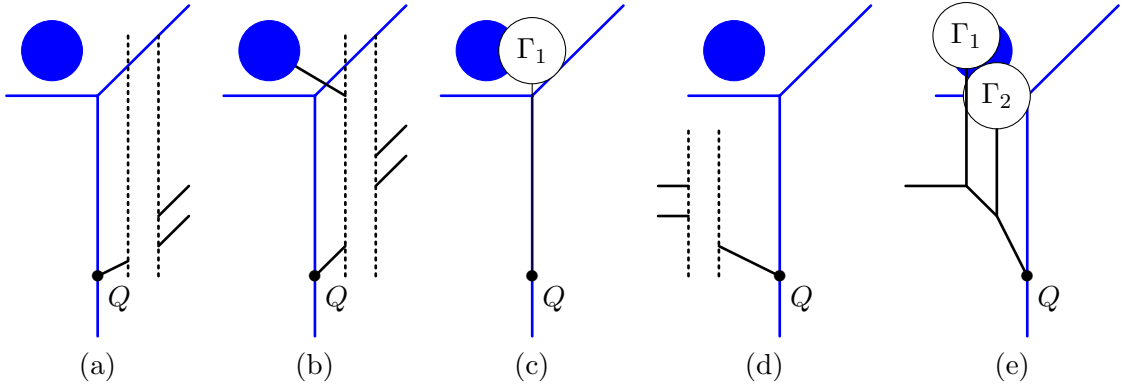
\begin{figure}
\begin{center}
\begin{tabular}{ccccc}
\begin{tikzpicture}[line cap=round,line join=round,>=triangle 45,x=0.4cm,y=0.4cm]
\draw[line width=1pt,blue] (0,0)--++(-3,0)++(3,0)--++(3,3)++(-3,-3)--++(0,-6)--++(0,-2);
\draw (0,-6) node {$\bullet$} node[below right] {$Q$};
\draw[blue,fill=blue] (-1.5,1.5) circle[radius=1];
\draw[line width=1pt,dotted] (1,-6)--++(0,8);
\draw[line width=1pt,dotted] (2,-6)--++(0,8);
\draw[line width=1pt] (0,-6)--++(1,0.5);
\draw[line width=1pt] (2,-4)--++(1,1);
\draw[line width=1pt] (2,-5)--++(1,1);
\end{tikzpicture} & \begin{tikzpicture}[line cap=round,line join=round,>=triangle 45,x=0.4cm,y=0.4cm]
\draw[line width=1pt,blue] (0,0)--++(-3,0)++(3,0)--++(3,3)++(-3,-3)--++(0,-6)--++(0,-2);
\draw (0,-6) node {$\bullet$} node[below right] {$Q$};
\draw[line width=1pt] (-1.5,1.5)--(1,0);
\draw[blue,fill=blue] (-1.5,1.5) circle[radius=1];
\draw[line width=1pt,dotted] (1,-6)--++(0,8);
\draw[line width=1pt,dotted] (2,-6)--++(0,8);
\draw[line width=1pt] (0,-6)--++(1,1);
\draw[line width=1pt] (2,-2)--++(1,1);
\draw[line width=1pt] (2,-3)--++(1,1);
\end{tikzpicture} & \begin{tikzpicture}[line cap=round,line join=round,>=triangle 45,x=0.4cm,y=0.4cm]
\draw[line width=1pt,blue] (0,0)--++(-3,0)++(3,0)--++(3,3)++(-3,-3)--++(0,-6)--++(0,-2);
\draw (0,-6) node {$\bullet$} node[below right] {$Q$};
\draw[blue,fill=blue] (-1.5,1.5) circle[radius=1];
\curve (1) at (0,1.5) {$\Gamma_1$};
\draw (0,-6) to (1);
\end{tikzpicture} & \begin{tikzpicture}[line cap=round,line join=round,>=triangle 45,x=0.4cm,y=0.4cm]
\draw[line width=1pt,blue] (0,0)--++(-3,0)++(3,0)--++(3,3)++(-3,-3)--++(0,-6)--++(0,-2);
\draw (0,-6) node {$\bullet$} node[below right] {$Q$};
\draw[blue,fill=blue] (-1.5,1.5) circle[radius=1];
\draw[line width=1pt,dotted] (-3,-6)--++(0,5);
\draw[line width=1pt,dotted] (-2,-6)--++(0,5);
\draw[line width=1pt] (0,-6)--++(-2,1);
\draw[line width=1pt] (-3,-4)--++(-1,0);
\draw[line width=1pt] (-3,-3)--++(-1,0);
\end{tikzpicture} & \begin{tikzpicture}[line cap=round,line join=round,>=triangle 45,x=0.4cm,y=0.4cm]
\draw[line width=1pt,blue] (0,0)--++(-3,0)++(3,0)--++(3,3)++(-3,-3)--++(0,-6)--++(0,-2);
\draw (0,-6) node {$\bullet$} node[below right] {$Q$};
\draw[blue,fill=blue] (-1.5,1.5) circle[radius=1];
\draw[line width=1pt] (0,-6)--++(-1,2)--++(-1,1)--++(-2,0);
\curve (2) at (-1,0) {$\Gamma_2$};
\curve (1) at (-2,2) {$\Gamma_1$};
\draw[line width=1pt] (-1,-4) to (2);
\draw[line width=1pt] (-2,-3) to (1);
\end{tikzpicture} \\
(a) & (b) & (c) & (d) & (e) \\
\end{tabular}
\caption{\label{figure slope of the edges adjacent to strings}Illustration for proof of Lemma \ref{lemma shape of curves for big cross-ratio r=1}.}
\end{center}
\end{figure}

		\subsection{Statement of the recursive formula}
	
Using Lemma \ref{lemma shape of curves for big cross-ratio r=1}, it is now possible to write down the recursive formula.

	\begin{theo}\label{theorem recursive formula r=1}
	One has the following identity:
	$$\boxed{ \begin{array}{>{\displaystyle}r>{\displaystyle}c>{\displaystyle}l}
	\ang{\psi^k L}_d = & \sum_{\substack{d_1+d_2=d \\ d_1,d_2\geqslant 1}} & \bino{3d-3-k}{3d_1-2} d_1^2 d_2 N_{d_1}(d_2\ang{\psi^k L}_{d_2}+\ang{\psi^{k-1}P}_{d_2}) \\
	& & - \bino{3d-3-k}{3d_1-1} d_1^3 d_2 N_{d_1}\ang{\psi^k L}_{d_2} \\
	& & + \bino{3d-3-k}{3d_1-2} d_1^2 d_2 N_{d_1}\ang{\psi^{k-1}P}_{d_2} \\
	& & - \bino{3d-3-k}{3d_1-1} d_1^3 N_{d_1} \ang{\psi^{k-1}P}_{d_2} \\
	& & + \bino{3d-3-k}{3d_1-3} d_1N_{d_1}(d_2\ang{\psi^{k-1}L}_{d_2}+\ang{\psi^{k-2}P}_{d_2}) \\
	& & - \bino{3d-3-k}{3d_2-2} d_1^2 N_{d_1}\ang{\psi^{k-1}L}_{d_2} \\
	& + 3\sum &  \frac{r^{S-s}}{(r!)^2 l!} \frac{1}{\sigma} \bino{3d-k-2}{\widehat{k_1},\cdots,\widehat{k_S}} \left( \frac{d}{3d-k-2}\sum_{i=1}^S \frac{\widehat{k_i}\widehat{d_i}}{\widehat{w_i}}-\sum_{i=1}^S \frac{\widehat{d_i}^2}{\widehat{w_i}} \right)\prod_{i=1}^S\widehat{w_i}\widehat{N}_{\widehat{d_i}}(\widehat{w_i})  \\
	\end{array} }$$
	
	where the second sum is over the following data:
	\begin{itemize}[label=-]
	\item positive integers $S\geqslant 1$, $l\geqslant 1$ and $r,s\geqslant 0$ with
	$$\left\{ \begin{array}{l}
	2r+l + s =k+2, \\
	s \leqslant S,\\
	s+r\leqslant l \leqslant d.\\
\end{array}	\right. $$
	\item integers families $(d_i,w_i)_{1\leqslant i\leqslant S-s},(\widetilde{d_j},\widetilde{w_j})_{S-s +1\leqslant j\leqslant S}$ with $1\leqslant\widehat{w_i}\leqslant \widehat{d_i}$ satisfying the conditions
	$$ S\leqslant\sum_{i=1}^S \widehat{w_i}=l -r, \ \sum_{i=1}^S \widehat{d_i}=d-r.$$
	\end{itemize}
	
The term $\sigma$ is the number of automorphisms of the splitting $(d_i,w_i),(\widetilde{d_j},\widetilde{w_j})$ (as in the Caporaso-Harris formula counting irreducible curves, see Theorem 4.5 in \cite{gathmann2007caporaso}), and $k_i=3d_i-w_i$, $\widetilde{k_j}=3\widehat{d_j}-\widetilde{w_j}-1$. The hat denotes either with tilde or without tilde according to the index in $[\![1;S]\!]$.
	\end{theo}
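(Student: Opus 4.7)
The plan is to run the tropical WDVV argument in the style of \cite{GM053}: consider the map $f:\M_{0,n+3}(\RR^2,d)\cap\psi_1^k\to\RR^2\times\RR^2\times\RR^2\times(\RR^2)^n\times\M_{0,4}$ and exploit the fact that $\M_{0,4}$, as a balanced tropical fan, has all three rays rationally equivalent. Hence the intersection number $f^{-1}(\Xi)\cdot[\mathrm{pt}]$ is independent of the chosen ray, so in particular the count obtained from $\Xi_A=L\times L_1\times L_2\times\prod\{P_i\}\times\{\lambda_A\}$ with $\lambda_A$ deep inside the type $L_1L_2//LP$ equals the count obtained from $\Xi_B$ with $\lambda_B$ deep inside the type $L_1P//L_2L$. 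The recursion will come from enumerating, for each of the two cycles, the tropical curves realizing this intersection; these are classified by Lemma \ref{lemma shape of curves for big cross-ratio r=1} into the contracted-edge case (i) and the ``$Q$ escapes along $L$'' case (ii).

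The contracted-edge case (i) is the standard source of product terms. Removing the contracted edge splits $\Gamma$ into two subcurves of degrees $d_1,d_2$ with $d_1+d_2=d$; the four special markings $L_1,L_2,L,P_1$ get dispatched according to the combinatorial type of $\M_{0,4}$ forced by the cross-ratio, and the remaining $P$-constraints are distributed in $\bino{3d-3-k}{3d_1-\ast}$ ways. Applying the multiplicity formula from Proposition \ref{prop-mult1} at the $\psi^kL$-vertex and the usual vertex multiplicities on each side, the intersection between the two subcurves contributes a determinant factor (computed exactly as in the proof of Kontsevich's formula in \cite{GM053}) which gives the $d_1^2 d_2$ or $d_1^3 d_2$ weights. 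Keeping track of whether the $\psi^k L$-marking ends up on the same component as $L_1L_2$ or as $L_1P$, and of the analogous cases for the power-drop $\psi^{k-1}L$ or $\psi^{k-1}P$ on either side, the difference between the $\Xi_A$-count and the $\Xi_B$-count produces the first six summands of the formula, with $\ang{\psi^k L}_d$ itself showing up exactly once (as the term one isolates on the left-hand side).

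The correction case (ii) is where the $\psi^k L$-marking escapes to infinity along a bottom end of $L$; multiplying by $3$ accounts for the three ends of $L$. Each adjacent component of $\Gamma\setminus\P$ around $Q$ is one of the three patterns (a), (b), (c) of Figure \ref{figure shape of strings for big cross-ratio}, and a global solution is a ``bouquet'' of such patterns. Labelling the patterns by the data $S$ (total number), $l,r,s$ (encoding how many of each type and how the weights sum), and $(d_i,w_i)$, $(\widetilde{d_j},\widetilde{w_j})$ (degrees/weights of attached subcurves, with the tilde marking those through which $L_2$ is forced to pass), the attached subcurves contribute $\widehat{w_i}\widehat{N}_{\widehat{d_i}}(\widehat{w_i})$ each, the $P$-constraints are distributed via the multinomial $\bino{3d-k-2}{\widehat{k_1},\ldots,\widehat{k_S}}$, the automorphism factor $\sigma$ prevents over-counting, and the prefactor $\tfrac{r^{S-s}}{(r!)^2 l!}$ records the choices of identifying the type (a) versus (c) patterns. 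The key computation is that when one forms the difference between the $\Xi_A$- and $\Xi_B$-contributions, the cross-ratio slope produces precisely the factor $\tfrac{d}{3d-k-2}\sum_i\tfrac{\widehat{k_i}\widehat{d_i}}{\widehat{w_i}}-\sum_i\tfrac{\widehat{d_i}^2}{\widehat{w_i}}$, because this is the derivative of the tropical cross-ratio along the deformation sending $Q$ to infinity.

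The main obstacle will be the combinatorics and multiplicity bookkeeping of case (ii): one must enumerate all admissible distributions of the markings $L_1,L_2,P_1$ among the patterns and the attached subcurves, identify exactly which of these distributions give different values under $\Xi_A$ and $\Xi_B$ (so that they survive in the difference), and verify that the cross-ratio slope computation yields the advertised linear combination of $\widehat{k_i}\widehat{d_i}/\widehat{w_i}$ and $\widehat{d_i}^2/\widehat{w_i}$. Once these are handled, the equality $f^{-1}(\Xi_A)\cdot[\mathrm{pt}]=f^{-1}(\Xi_B)\cdot[\mathrm{pt}]$ rearranged to isolate $\ang{\psi^k L}_d$ produces the displayed formula, with the boxed recursion expressing this invariant in terms of strictly smaller data ($d_2<d$, or $\psi^{k-1}$ and lower) already accessible by induction and by the point-insertion WDVV of \cite{MR08}.
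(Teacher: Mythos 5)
Your proposal follows essentially the same route as the paper's proof: the same evaluation map $f$, the same two cycles $\Xi_A$, $\Xi_B$ with a large cross-ratio in the types $L_1L_2//LP$ and $L_1P//L_2L$, the same classification via Lemma \ref{lemma shape of curves for big cross-ratio r=1} into contracted-edge contributions (yielding the six product terms) and escape-to-infinity contributions (yielding the correction sum with the factor $3$), and the same final step of equating the two counts and isolating $\ang{\psi^k L}_d$. The only loose point is attributing the factor $\frac{d}{3d-k-2}\sum_i\frac{\widehat{k_i}\widehat{d_i}}{\widehat{w_i}}-\sum_i\frac{\widehat{d_i}^2}{\widehat{w_i}}$ to a ``derivative of the cross-ratio''; in the paper it arises purely combinatorially, from counting the admissible placements of the $L_1$-, $L_2$- and $P_1$-markings among the components $\Gamma_i$ and the escaping string (which differ between $\Xi_A$ and $\Xi_B$) and simplifying the resulting multinomial sums, with a common overcounted term cancelling on both sides — but you correctly flag this enumeration as the remaining work in your final paragraph.
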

	
	Factorizing the binomial coefficients, the formula also writes	
	$$\boxed{ \begin{array}{>{\displaystyle}r>{\displaystyle}c>{\displaystyle}l}
	\ang{\psi^k L}_d = & \sum_{\substack{d_1+d_2=d \\ d_1,d_2\geqslant 1}} & -\bino{3d-3-k}{3d_1-1} d_1^3 N_{d_1}(d_2\ang{\psi^k L}_{d_2}+\ang{\psi^{k-1}P}_{d_2}) \\
	& &+ \bino{3d-3-k}{3d_1-2} d_1^2 N_{d_1}( d_2^2\ang{\psi^k L}_{d_2} + 2d_2\ang{\psi^{k-1}P}_{d_2} -\ang{\psi^{k-1}L}_{d_2} ) \\
	& & +\bino{3d-3-k}{3d_1-3} d_1N_{d_1}(d_2\ang{\psi^{k-1}L}_{d_2}+\ang{\psi^{k-2}P}_{d_2}) \\
	& + 3\sum &  \frac{r^{S-s}}{(r!)^2 l!} \frac{1}{\sigma} \bino{3d-k-2}{\widehat{k_1},\cdots,\widehat{k_S}} \left( \frac{d}{3d-k-2}\sum_{i=1}^S \frac{\widehat{k_i}\widehat{d_i}}{\widehat{w_i}}-\sum_{i=1}^S \frac{\widehat{d_i}^2}{\widehat{w_i}} \right)\prod_{i=1}^S\widehat{w_i}\widehat{N}_{\widehat{d_i}}(\widehat{w_i})  \\
	\end{array} }$$
	
	\begin{rem}
	In the first form, the formula is comprised of six terms in the sum over $d_1,d_2$ with $d_1+d_2=d$ and a second sum. We explain the origin of these contributions:
	\begin{itemize}[label=$\ast$]
	\item The first terms are obtained as in the proof of Kontsevich's formula \cite{GM053}: a big cross-ratio forces a contracted edge that splits the curve into $C_1$ and $C_2$, only one of them containing the $\psi^k L$-constraint. Both curves $C_1$ and $C_2$ can be fixed by the constraints, leading to the first two summands. The difference is that the contracted edge can be adjacent to the $\psi^k L$-constraint, so that only $C_1$ or $C_2$ is fixed by the constraints, and it determines the position of the other, leading to the other four summands.
	\item As the big cross-ratio is not enough to ensure the appearance of a contracted edge, we have the correction term, which consists in the deformation patterns described by Lemma \ref{lemma shape of curves for big cross-ratio r=1}:
		\begin{itemize}[label=-]
		\item Curves $(\Gamma_i,w_i)$ appear in strings of type (b) , curves $(\widetilde{\Gamma_j},\widetilde{w_j})$ for strings of type (c) (see Figure \ref{figure shape of strings for big cross-ratio}). In total, there are $S$ of them.
		\item There are $l$ ends (as \textit{lower ends}) of slope $(0,-1)$ and $r$ ends of slope $(1,1)$. By balancing, there are $r$ strings of type (b) leaving on the left.
		\item There are $s\leqslant S$ strings of type (c), and thus exactly $s$ $(\widetilde{\Gamma_j},\widetilde{w_j})$. The remaining $S-s$ components dispatch over the $r$ strings of type (b).
		\end{itemize}
	\end{itemize}
	\end{rem}
	
	\begin{rem}
	The term $\sigma$ is equal to $\prod _{d,w}n_{(d,w)}!\prod _{\widetilde{d},\widetilde{w}}n_{(\widetilde{d},\widetilde{w})}!$, where $n_\bullet$ is the number of $\bullet$ in the splitting.
	\end{rem}

	\begin{proof}
	We evaluate the intersection number between $f(\M_{0,n+3}(\RR^2,d)\cap\psi_Q^k)$ and $\Xi_A/\Xi_B$ for a big value of $\lambda$ on the following two combinatorial types: $(L_1L_2//LP)$ and $(PL_1//LL_2)$. Lemma \ref{lemma shape of curves for big cross-ratio r=1} describes the tropical curves corresponding to this intersection, which we now gather.
	
\textbf{Step 1, the $(L_1L_2//LP)$ left side:} 
	\begin{enumerate}[label=(\Roman*)]
	\item We assume $\Gamma$ has a contracted edge. This edge splits the curve in two components $C_1$ and $C_2$ of respective degrees $d_1+d_2=d$. The marked points are distributed among the two components. We have several possibilities according to whether the contracted edge is adjacent to the vertex with the $\psi^k L$-constraint or not, and whether $d_1=0$ or $d_1\neq 0$. Let $n_1$ (resp. $n_2$) be the number of marked point on $C_1$ (resp. $C_2$).
		\begin{enumerate}[label=(\alph*)]
		\item If $d_1=0$, $C_1$ is contracted to the intersection point of $L_1$ and $L_2$, so that $C_2$ is of degree $d$ and has to pass through $L_1\cap L_2$. The contribution is equal to
		$$(Ia)^\mathrm{left}=\ang{\psi^k L}_d.$$
		\item Assume now that $d_1\neq 0$, and the contracted edge is not adjacent to the $\psi^k L$-point. As $C_1$ is subject to $n_1$ point constraints, and $C_2$ to $n_2$ point constraints as well as the $\psi^k L$ constraint, one has
	$$\left\{\begin{array}{l}
	n_1\leqslant 3d_1-1 , \\
	n_2\leqslant 3d_2-1-k.\\
\end{array}	 \right.$$
	Adding the two relations, we see that $n=n_1+n_2\leqslant 3(d_1+d_2)-2-k=n$, so we have in fact equality. Thus, we can pick independently $C_1$ and $C_2$ subject to point constraints ($N_{d_1}\ang{\psi^k L}_{d_2}$), pick an intersection point between them where to insert the contracted edge ($d_1d_2$), and the intersection point between $L_1$, $L_2$ and $C_1$ ($d_1^2$). This contributes a term
	$$(Ib)^\mathrm{left}=\sum_{d_1+d_2=d} \bino{3d-3-k}{3d_1-1} d_1^3d_2 N_{d_1}\ang{\psi^k L}_{d_2}.$$
		\item We now assume that the contracted edge is adjacent to the $\psi^k L$-point, so that the valency of the vertex inside $C_2$ is only $k+2$ instead of $k+3$. Now, $C_2$ is subject to $n_2$ point condition and a $\psi^{k-1}L$-constraint. We now have
		$$\left\{\begin{array}{l}
	n_1\leqslant 3d_1-1 , \\
	n_2\leqslant 3d_2-1-(k-1).\\
\end{array}	 \right.$$
		Adding the two rows, we do not get an equality anymore, as both sides differ by one. So we are in one of the following situations:
		\begin{itemize}[label=$\circ$]
		\item We have $n_1=3d_1-1$. In this situation, we pick $C_1$ fixed by the points constraints ($N_{d_1}$), an intersection point with $L_1$ and $L_2$ ($d_1^2$). The curve $C_2$ has a $\psi^{k-1}P$-constraint ($\ang{\psi^{k-1}P}_{d_2}$) at one of the intersection point between $C_1$ and $L$, of which there are $d_1$. We get a contribution of
		$$(Ic)^\mathrm{left}=\sum_{d_1+d_2=d} \bino{3d-3-k}{3d_1-1} d_1^3N_{d_1}\ang{\psi^{k-1}P}_{d_2}.$$
		\item We have $n_1=3d_1-2$. In this situation, $C_2$ is fixed by the $P$-constraints and $\psi^{k-1}L$-constraint ($\ang{\psi^k L}_{d_2}$). The curve $C_1$ has then to pass through the $\psi^k L$-marking ($N_{d_1}$). We then pick the intersection points with $L_1$ and $L_2$ ($d_1^2$). We get a contribution of
		$$(Ic')^\mathrm{left}=\sum_{d_1+d_2=d} \bino{3d-3-k}{3d_1-2} d_1^2 N_{d_1}\ang{\psi^{k-1}L}_{d_2}.$$
		\end{itemize}
		\end{enumerate}
	
	\item We now compute the correction term on the left side, which corresponds to tropical curves for which the $\psi^k L$-point constraint goes to infinity on one of the ends of $L$. We assume that the correction goes in one of the directions, \textit{e.g.} the bottom one, and later multiply by $3$. The curves are described by Lemma \ref{lemma shape of curves for big cross-ratio r=1}. The deformation pattern is described by the following data:
		\begin{itemize}[label=$\ast$]
		\item The number $r$ of ends in direction $(1,1)$ (type (a)), also equal to the number of strings of type (b) going to the left by balancing.
		\item The number $l$ of ends in direction $(0,-1)$,
		\item the family of weights and degrees $(\widetilde{d_j},\widetilde{w_j})_j$ of curves in strings of type (c). We have $\widetilde{w_j}\leqslant\widetilde{d_j}$, and the family has $s$ elements.
		\item The family of weights and degrees $(d_i,w_i)_i$ of the curves adjacent to strings of type (b). We have $w_i\leqslant d_i$, and the family has $S-s$ elements.
		\item How the $S-s$ curves are grouped together among the $r$ strings of type (b).
		\end{itemize}
		Moreover, they have to satisfy the following equations, coming from the $\psi^k L$-constraint and the balancing condition:
		$$\left\{\begin{array}{l}
		2r+ l + s =k+2 \text{ by the }\psi^kL\text{ constraint,} \\
		\sum_i w_i+\sum_j \widetilde{w_j} +r=l \text{ by vertical balancing,}\\
		\sum_i d_i + \sum_j d_j =d-r \text{ by degree conservation.}\\
\end{array}		 \right.$$
It also implies that $d\geqslant l\geqslant s +r$. Concretely, pick $r,l,s,S$ fulfilling the inequality, then choose the $(\widetilde{d_j},\widetilde{w_j})$ and finish with the $(d_i,w_i)$ along with their grouping.

Removing the strings, the curve is cut into several connected components $h_i:\Gamma_i\to\RR^2$ of degrees adding up to $d-r$, and the marked point $P_1$ belongs to one of them. Then, each of the lines $L_1$ and $L_2$ intersects the tropical curve $\Gamma$ at the curves $\Gamma_i$, and the strings going to infinity. Thanks to Lemma \ref{lemma shape of curves for big cross-ratio r=1}, the only possibility of location of the intersection points leading to an increasing cross-ratio $(L_1L_2//LP)$ when the string goes to infinity is when the intersection points of $L_1$ and $L_2$ with $\Gamma$ are chosen inside the same component different from the one containing $P_1$.

We then have to distribute the remaining marked points among the components. Let us momentarily forget about the disjunction between tilde and not tilde components, and index them by $[\![1;S]\!]$. We first have a factor $\frac{1}{\sigma}$ as in the Caporaso-Harris formula \cite{gathmann2007caporaso} Theorem 4.5 for irreducible curves accounting for the symmetries of the splitting. Then, for each component, let $\widehat{k_i}$ be the number of points needed to mark it: $k_i=3d_i-w_i$ or $\widetilde{k_i}=3\widetilde{d_i}-\widetilde{w_i}-1$.

$$(II)^\mathrm{left} = \frac{1}{\sigma}\left(\sum_{i=1}^S \bino{n-1}{\widehat{k_1},\cdots,\widehat{k_i}-1,\cdots,\widehat{k_S}} \left(\sum_{j\neq i}\frac{\widehat{d_j}^2}{\widehat{w_j}}\right)\right)\prod \widehat{w_i} \widehat{N}_{\widehat{d_i}}(\widehat{w_i}),$$
where the first sum accounts for the component that contains $P_1$, and the binomial coefficient for the distribution of the remaining marked points. The second sum accounts for the choice of the component that contains intersection points with $L_1$ and $L_2$. The multiplicity then contains a term $\widehat{d_j}^2$ for the number of possible intersection points, and a $\frac{1}{\widehat{w_j}}$ since the latter does not appear in the computation of the multiplicity, so it removes the contribution from $\prod \widehat{w_i}$. The fact that factors of $\widehat{w_i}$ show up in the multiplicity except for the one leading to the component with $L_1$ and $L_2$ can be seen be computing the determinant of the map $f$ in local coordinates similar as has been done in Propositions \ref{prop-mult1} and \ref{prop-mult2}. 

We now practice some standard computation to get an easier expression (with hats everywhere):
\begin{align*}
 & \sum_{i=1}^S \bino{n-1}{k_1,\cdots,k_i-1,\cdots,k_S} \left(\left(\sum_{j}\frac{d_j^2}{w_j}\right)-\frac{d_i^2}{w_i}\right)\\
  = &  \left( \sum_j \frac{d_j^2}{w_j}\right)\left( \sum_{i=1}^S \bino{n-1}{k_1,\cdots,k_i-1,\cdots,k_S}\right)-\sum_{i=1}^S \bino{n}{k_1,\cdots,k_S}\frac{k_i}{n}\frac{d_i^2}{w_i} \\
  = &  \bino{n}{k_1,\cdots,k_S} \sum_{i=1}^S\frac{d_j^2}{w_j} \left( 1-\frac{k_j}{n}\right) . \\
\end{align*}

Finally, we have to account for the grouping of the components into the $r$ strings going into the $\psi^k L$-point. As the curves are distinguished by the marked points that they contain, it is a combinatorial factor. Moreover, we have a $\frac{1}{e!}$ term coming from the automorphisms of the curve, where $e$ is the number of ends adjacent to the $\psi^k L$-point in direction $(-1,0)$. We have to compute the sum
$$\sum_{e=0}^{r-1}\frac{1}{e!}\left\{\begin{matrix}
 S-s \\
 r-e \\
\end{matrix}\right\} = \frac{r^{S-s}}{r!},$$
where the braces denote second kind Stirling numbers: number of ways of splitting a set of here $S-s$ elements into a fixed number of unlabelled sets. The equality comes from counting the number of maps from $[\![S-s]\!]$ to $[\![1;r]\!]$ up to the action of the symmetric group $S_r$.
	\end{enumerate}

\textbf{Step 2, the $(PL_1//LL_2)$ right side:} 
	\begin{enumerate}[label=(\Roman*)]
	\item We assume $\Gamma$ has a contracted edge, and proceed as in step 1 with similar notations: $\Gamma$ is split in $C_1$ and $C_2$ with respective degrees $d_1$ and $d_2$. The contracted edge can be adjacent to the $\psi^k L$-constraint or not, leading to two different possibilities.
		\begin{enumerate}[label=(\alph*)]
		\item Assume the contracted edge is not adjacent to the $\psi^k L$-constraint. We have
		$$\left\{\begin{array}{l}
	n_1\leqslant 3d_1-1 , \\
	n_2\leqslant 3d_2-1-k.\\
\end{array}	 \right.$$
		As the sum becomes an equality, we have equality in each term. The solving for $C_1$ yields $N_{d_1}$ while the solving for $C_2$ gives $\ang{L_2,\psi^{k}L}_{d_2}$. We have $d_1d_2$ for the intersection points between $C_1$ and $C_2$, an additional $d_1$ for intersection points between $C_1$ and $L_1$, and the divisor equation to replace $\ang{L_2,\psi^k L}_{d_2}$ by $d_2\ang{\psi^k L}_{d_2}+\ang{\psi^{k-1}P}_{d_2}$. In total, the contribution is
		$$(Ia)^\mathrm{right}=\sum_{d_1+d_2=d} \bino{3d-3-k}{3d_1-2} d_1^2 d_2 N_{d_1}(d_2\ang{\psi^k L}_{d_2}+\ang{\psi^{k-1}P}_{d_2}).$$

		\item Now, assume that the contracted edge is adjacent to the $\psi^k L$-point, now $C_2$ is only subject to a $\psi^{k-1}L$-condition. We have
		$$\left\{\begin{array}{l}
	n_1\leqslant 3d_1-1 , \\
	n_2\leqslant 3d_2-1-(k-1),\\
\end{array}	 \right.$$
		and we do not get an equality anymore when adding the two rows. As in step 1, we get two cases.
			\begin{itemize}[label=$\circ$]
			\item If $n_1=3d_1-1$, $C_1$ is fixed by the point constraints ($N_{d_1}$), it has $d_1$ intersection points with $L_1$. The curve $C_2$ has a $\psi^{k-1}P$-constraint at one of the intersection points between $C_1$ and $L$ ($\ang{\psi^{k-1}P}_{d_2}$), of which there are $d_1$. It has also $d_2$ intersection points with $L_2$. The contribution is
			$$(Ib)^\mathrm{right}=\sum_{d_1+d_2=d}\bino{3d-3-k}{3d_1-2} d_1^2 d_2 N_{d_1}\ang{\psi^{k-1} P}_{d_2}.$$
			\item If $n_1=3d_1-2$, $C_2$ is fixed by the $n_2$ $P$-constraints and the $\psi^{k-1}L$-constraint, yielding $\ang{L_2,\psi^{k-1}L}_{d_2}=d_2\ang{\psi^{k-1}L}_{d_2}+\ang{\psi^{k-2}P}_{d_2}$ by the divisor equation. Then, $C_1$ has to pass through the $\psi^k L$-point ($N_{d_1}$). It then has $d_1$ intersection points with $L_1$. We get the contribution
			$$(Ib')^\mathrm{right}=\sum_{d_1+d_2=d}\bino{3d-3-k}{3d_1-3}d_1 N_{d_1}(d_2\ang{\psi^{k-1}L}_{d_2}+\ang{\psi^{k-2}P}_{d_2}),$$
		\end{itemize}
		
		\end{enumerate}
	
	\item We now proceed as in Step 1 (II) to compute the correction term coming from the curves with the $\psi^k L$-point going to infinity. The curves are still described by Lemma \ref{lemma shape of curves for big cross-ratio r=1}, and exactly by the same data as in Step 1 (II). The difference comes from the relative position of the intersection points between $L_1,L_2$ and $\Gamma$, the $\psi^k L$-point, and $P_1$. The marked point $P_1$ lies in one of the $\Gamma_i$. This time, the only possibility leading to a big cross-ratio $(PL_1//LL_2)$ is when the intersection point with $L_1$ is in the same component as $P_1$, and the intersection point with $L_2$ is either on the string near infinity or in a different component.
	
	We similarly momentarily forget about the difference between tilde and non-tilde components to count the curves. Everything is to understand with a hat which we delete to avoid burdening notations.
	
	$$(II)^\mathrm{right} = \frac{1}{\sigma} \left( \sum_{i=1}^S \bino{n-1}{k_1,\cdots,k_i-1,\cdots,k_S} \left(r\frac{d_i}{w_i}+\sum_{j\neq i} \frac{d_id_j}{w_i}\right) \right)\prod w_i N_{d_i}(w_i).$$
	
	In the inner sum, $r\frac{d_i}{w_i}$ corresponds to $L_2$ intersecting the string, and the sum over $L_2$ intersecting the other components. We now carry out some simplifications.
	
\begin{align*}
 & \sum_{i=1}^S \bino{n-1}{k_1,\cdots,k_i-1,\cdots,k_S} \left(r\frac{d_i}{w_i}+\sum_{j\neq i} \frac{d_id_j}{w_i}\right) \\
= & \bino{n}{k_1,\cdots,k_S}\sum_{i=1}^S \frac{k_i}{n}\left(r\frac{d_i}{w_i}+\frac{d_i}{w_i}\left(\sum_{j}d_j\right) - \frac{d_i^2}{w_i}\right) \\
= & \bino{n}{k_1,\cdots,k_S}\sum_{i=1}^S \frac{k_i}{n}\left(r\frac{d_i}{w_i}+\frac{d_i}{w_i}(d-r) - \frac{d_i^2}{w_i}\right) \\
= & \bino{n}{k_1,\cdots,k_S}\sum_{i=1}^S \frac{k_i}{n}\left(\frac{dd_i}{w_i} - \frac{d_i^2}{w_i}\right). \\
\end{align*}
The combinatorial factor corresponding to the grouping of $\Gamma_i$ into the strings is the same: $\frac{r^{S-s}}{r!}$.
	
	\end{enumerate}

\textbf{Last step: }We now equalize the two intersection numbers by adding all contributions:
$$(Ia)^\mathrm{left}+(Ib)^\mathrm{left}+(Ic)^\mathrm{left}+(Ic')^\mathrm{left}+3(II)^\mathrm{left}=(Ia)^\mathrm{right}+(Ib)^\mathrm{right}+(Ib')^\mathrm{right}+3(II)^\mathrm{right}.$$
Notice that there is a factor $3$ for the correction terms since the strings can go in each of the three directions of ends of $L$. We notice that the summand
$$\frac{1}{n}\bino{n}{k_1,\cdots,k_S}\sum_{i=1}^S \frac{k_i d_i^2}{w_i},$$
appears on both sides of the equality. It is is not a surprise since it appeared by overcounting the curves where the intersection points with $L_1$, $L_2$ and $P_1$ lie in the same $\Gamma_i$, which is forbidden by each of the cross-ratio constraint.
	\end{proof}

%
%

\begin{rem}
Notice thatin Theorem \ref{theorem recursive formula r=1} we have
$$\left\{ \begin{array}{l}
2r+l+s=k+2,\\
r\leqslant l-1 \text{ because }l-r\geqslant S\geqslant 1,\\
s\leqslant l-r.\\
\end{array}\right.$$
Thus, we get that $k+2\leqslant 3l-1$.
\end{rem}

	\subsection{Practical cases of the formula}
	
		\subsubsection{The WDVV recursion for $\ang{\psi L}_d$, i.e.\ the $k=1$ case.}
		
	\begin{coro}\label{cor-WDVVk=1}
	Theorem \ref{theorem recursive formula r=1} simplifies to the following recursive formula for $k=1$:
	$$\boxed{ \begin{array}{>{\displaystyle}r>{\displaystyle}>{\displaystyle}l}
	\ang{\psi L}_d = \sum_{\substack{d_1+d_2=d \\ d_1,d_2 \geqslant 1}} & - \bino{3d-4}{3d_1-1}d_1^3 N_{d_1}(d_2\ang{\psi L}_{d_2}+N_{d_2})\\
	& + \bino{3d-4}{3d_1-2} d_1^2 d_2 N_{d_1}(d_2\ang{\psi L}_{d_2} + N_{d_2}) \\
	& + \bino{3d-4}{3d_1-3} d_1 d_2^2 N_{d_1}N_{d_2} . \\  
	\end{array} }$$
	\end{coro}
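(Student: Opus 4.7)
The plan is to substitute $k = 1$ directly into the factorized form of Theorem \ref{theorem recursive formula r=1}, simplify the main sum over $d_1 + d_2 = d$ using the divisor equation, and show the correction sum vanishes identically.

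For the main sum, I will use three identifications: $\ang{\psi^{0} P}_{d_2} = N_{d_2}$ (by the convention of dropping $\tau_0(P)$ insertions), $\ang{\psi^{0} L}_{d_2} = \ang{L}_{d_2} = d_2 N_{d_2}$ (by the remark following Lemma \ref{lem-divisoreq}, where $\ang{L,L}_d = d\ang{L}_d$), and $\ang{\psi^{-1} P}_{d_2} = 0$ (forced by $\ang{L,L}_{d_2} = d_2^{2} N_{d_2}$ combined with Lemma \ref{lem-divisoreq}). The first factorized term is unchanged. In the second, the internal combination becomes $d_2^{2}\ang{\psi L}_{d_2} + 2 d_2 N_{d_2} - d_2 N_{d_2} = d_2(d_2 \ang{\psi L}_{d_2} + N_{d_2})$, producing the claimed second summand. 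The third factorized term reduces to $\binom{3d-4}{3d_1-3} d_1 N_{d_1} \cdot d_2 \cdot d_2 N_{d_2}$, matching the third summand. So the three main terms recover the stated formula.

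The key step is the vanishing of the correction sum, which I handle by a finite case analysis on $(r, l, s, S)$. The constraints $2r + l + s = k+2 = 3$, $l \geq 1$, $r, s \geq 0$, $s + r \leq l$, and $1 \leq S \leq l - r$ leave only the triples $(r, l, s) \in \{(0,1,2), (0,2,1), (0,3,0), (1,1,0)\}$. The triple $(0,1,2)$ violates $s \leq l-r$, and $(1,1,0)$ forces $S \leq 0$, both excluded. For $(0,3,0)$, the constraint $S \geq 1$ and $s = 0$ give $S - s \geq 1$, but then $r^{S-s} = 0^{S} = 0$ kills the term. The same mechanism kills the sub-case $S = 2$ of $(0,2,1)$. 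Only $(r, l, s, S) = (0, 2, 1, 1)$ remains.

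In this last case, there is a single tilde component with $\widetilde w_1 = l - r = 2$ and $\widetilde d_1 = d - r = d$, so $\widetilde k_1 = 3d - 3$; hence the multinomial equals $\binom{3d-3}{3d-3} = 1$. The bracket then computes as
\[
\frac{d}{3d-3}\cdot\frac{\widetilde k_1 \widetilde d_1}{\widetilde w_1}-\frac{\widetilde d_1^{2}}{\widetilde w_1}=\frac{d}{3d-3}\cdot\frac{(3d-3)d}{2}-\frac{d^{2}}{2}=0.
\]
So this case contributes zero and the entire correction sum vanishes. The only non-routine point is this final cancellation inside the bracket; the rest of the argument is bookkeeping on the index set and routine substitution.
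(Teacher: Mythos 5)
Your proposal is correct and follows essentially the same route as the paper: substitute $k=1$ into the (factorized) recursion using $\ang{\psi^{0}P}_{d}=N_{d}$, $\ang{\psi^{0}L}_{d}=dN_{d}$, $\ang{\psi^{-1}P}_{d}=0$, and then check that every admissible $(r,l,s,S)$ in the correction sum contributes zero. Your treatment is in fact slightly more explicit than the paper's, which excludes $(0,1,2)$ and $(1,1,0)$ at once via the bound $k+2\leqslant 3l-1$ and merely asserts that the $(0,2,1)$ term ``cancels,'' whereas you verify the vanishing of the bracket $\frac{d}{3d-3}\cdot\frac{(3d-3)d}{2}-\frac{d^{2}}{2}=0$ directly.
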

	
	\begin{proof}
	We use that for $k=1$, one has $\ang{\psi^{k-1} P}_d=N_d$, $\ang{\psi^{k-1} L}=dN_d$ and $\ang{\psi^{k-2} P}=0$. Concerning the correction sum, we have to sum over the integers $(r,l,s)$ with $2r+l+s=k+2=3$. We have $3l-1\geqslant 3$, so $l$ is at least $2$. It solves for:
	\begin{itemize}[label=$\ast$]
	\item $(r,l,s)=(0,2,1)$, for which the contribution cancels. This is no surprise because there is a unique string of type (c), so it is not possible to split the elements involved in the cross-ratio.
	\item $(r,l,s)=(0,3,0)$, which is impossible since $r=0$ forces $s=S\geqslant 1$.
	\end{itemize}
	\end{proof}
	
	\begin{expl}
	Using the formula from Corollary \ref{cor-WDVVk=1} and the initial value $\ang{\psi L}_1=2$, we get that $\ang{\psi L}_2=4$ and $\ang{\psi L}_3=60$, thus confirming our direct computations from Example \ref{ex-k=1}.
	\end{expl}
		
		\subsubsection{The WDVV recursion for $\ang{\psi^2 L}_d$, i.e.\ the $k=2$ case.}
		
	\begin{coro}\label{cor-WDVVk=2}
	Theorem \ref{theorem recursive formula r=1} simplifies to the following recursive formula for $k=2$:
	$$\boxed{ \begin{array}{>{\displaystyle}r>{\displaystyle}c>{\displaystyle}l}
	\ang{\psi^2 L}_d = &\sum_{\substack{d_1+d_2=d \\ d_1,d_2 \geqslant 1}} & - \bino{3d-5}{3d_1-1}d_1^3 N_{d_1}(d_2\ang{\psi^2 L}_{d_2}+\ang{\psi P}_{d_2}) \\
	& & + \bino{3d-5}{3d_1-2}d_1^2 N_{d_1}(d_2^2\ang{\psi^2 L}_{d_2}+2d_2\ang{\psi P}_{d_2} -\ang{\psi L}_{d_2})\\
	& & + \bino{3d-5}{3d_1-3}d_1 N_{d_1}(d_2\ang{\psi L}_{d_2}+N_{d_2})\\
	& + & \frac{3}{2}\left( (d-1)^2 N_{d-1} + \sum_{d_1+d_2=d}\bino{3d-5}{3d_1-1}d_2(d_1-d_2)N_{d_1}N_{d_2} \right).\\
	\end{array} }$$
	\end{coro}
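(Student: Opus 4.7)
The plan is to specialize Theorem \ref{theorem recursive formula r=1} to $k = 2$ and simplify each piece. First, I would substitute the lower-order invariants $\ang{\psi^{k-1}P}_d = \ang{\psi P}_d$, $\ang{\psi^{k-1}L}_d = \ang{\psi L}_d$, and $\ang{\psi^{k-2}P}_d = N_d$ (the last by the convention that bare $P$-insertions are suppressed from the notation) into the factored form of the main sum; this yields directly the three displayed terms in the corollary's first sum, with $3d - 3 - k = 3d - 5$.

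Next, I would enumerate the admissible triples $(r, l, s)$ in the correction sum subject to $2r + l + s = k + 2 = 4$ together with $l \geqslant 1$, $s + r \leqslant l$, $S \geqslant 1$ and $\sum \widehat{w_i} = l - r$. A short check shows only three triples survive: $(1, 2, 0)$, $(0, 2, 2)$, and $(0, 3, 1)$. The case $(0, 3, 1)$ has a unique tilde component with $\widetilde{w_1} = 3$ and $\widetilde{d_1} = d$, and a direct calculation shows that the parenthesized factor $\tfrac{d}{3d-4}\cdot\tfrac{(3d-4)\, d}{3} - \tfrac{d^2}{3}$ vanishes identically, so this case drops out. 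The case $(1, 2, 0)$ has a unique non-tilde component with $w_1 = 1$ and $d_1 = d - 1$; using $N_{d-1}(1) = (d-1) N_{d-1}$ I expect its contribution to come out to $\tfrac{(d-1)^2}{2} N_{d-1}$, which becomes $\tfrac{3(d-1)^2}{2} N_{d-1}$ after the overall factor of $3$ from the theorem. This already matches the first term of the corollary's correction.

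The remaining case $(0, 2, 2)$ is where the main work lies. Here I would sum over multisets $\{(\widetilde{d_1}, 1), (\widetilde{d_2}, 1)\}$ with $\widetilde{d_1} + \widetilde{d_2} = d$. Using $\widetilde{d_1}^2 + \widetilde{d_2}^2 = d^2 - 2\widetilde{d_1}\widetilde{d_2}$, the parenthesized factor collapses to $\tfrac{2(\widetilde{d_1} - \widetilde{d_2})^2}{3d-4}$, which is convenient but symmetric in $\widetilde{d_1}, \widetilde{d_2}$. To recast this into the asymmetric expression of the corollary, I would unfold the multiset sum to an ordered sum (the diagonal $\widetilde{d_1} = \widetilde{d_2}$ contributes zero anyway because of the $(\widetilde{d_1} - \widetilde{d_2})^2$ factor) and then use the binomial symmetry $\bino{3d-4}{3d_1-2} = \bino{3d-4}{3d_2-2}$ together with Pascal-type identities to rewrite $\tfrac{(d_1-d_2)^2}{3d-4}\bino{3d-4}{3d_1-2}$ as an antisymmetric combination of $\bino{3d-5}{3d_1-1} d_2(d_1-d_2)$ summands. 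This combinatorial manipulation, together with careful tracking of the factors $\tfrac{r^{S-s}}{(r!)^2\, l!\, \sigma}$, is the main technical obstacle I anticipate; once carried out, collecting the three nonzero cases produces the stated formula.
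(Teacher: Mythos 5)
Your overall strategy coincides with the paper's: specialize the first sum of Theorem \ref{theorem recursive formula r=1} directly, then enumerate the admissible triples $(r,l,s)$ with $2r+l+s=4$ and evaluate each. Your treatment of $(1,2,0)$ and $(0,3,1)$ is correct and in fact slightly more explicit than the paper's (your direct verification that the parenthesized factor vanishes for $(0,3,1)$ is a cleaner justification than the paper's remark that ``the contribution cancels''), and your symmetric evaluation of the $(0,2,2)$ factor as $\tfrac{2(\widetilde{d_1}-\widetilde{d_2})^2}{3d-4}$ is also correct.

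The step that would fail is the last one: the conversion of that symmetric expression into the asymmetric form $\bino{3d-5}{3d_1-1}d_2(d_1-d_2)$ printed in the corollary. No Pascal-type manipulation can achieve this, because the two expressions are genuinely different. The antisymmetrization identity that does hold is
$$\sum_{d_1+d_2=d}\bino{3d-4}{3d_1-2}\frac{(d_1-d_2)^2}{3d-4}N_{d_1}N_{d_2}=\sum_{d_1+d_2=d}\bino{3d-5}{3d_1-3}d_2(d_1-d_2)N_{d_1}N_{d_2},$$
with index $3d_1-3$ (equivalently $3d_2-2$), not $3d_1-1$; this follows from $b(3a-2)-a(3b-2)=2(a-b)$ after pairing $(d_1,d_2)$ with $(d_2,d_1)$. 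A numerical check at $d=3$ makes the discrepancy concrete: your symmetric form gives $\bino{5}{1}\tfrac{1}{5}+\bino{5}{4}\tfrac{1}{5}=2$, the form with $\bino{3d-5}{3d_1-3}$ also gives $\bino{4}{0}(-2)+\bino{4}{3}(1)=2$, while the printed form gives $\bino{4}{2}(-2)+\bino{4}{5}(1)=-12$. Only the value $2$ (hence a correction term of $\tfrac32(4+2)=9$) reproduces the stated $\ang{\psi^2 L}_3=45+9=54$ from Example 5.9, and it is also what the paper's own geometric count produces (choosing the $3d_1-3$ remaining points on the component of degree $d_1$ containing $P_1$ costs $\bino{3d-5}{3d_1-3}$). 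So your computation is right and the binomial index in the printed corollary is a typo; rather than forcing your (correct) symmetric expression into the printed form, you should either state and prove the corrected formula or flag the inconsistency. As a minor additional point, you should also justify why $(0,4,0)$ is excluded ($r=0$ forces $S=s=0$, contradicting $S\geqslant 1$), which you pass over silently.
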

	
	\begin{proof}
	We just put $k=2$ in the formula to get the first sum over $d_1+d_2=d$, replacing $\ang{\psi^0 P}_d$ by $N_d$. Concerning the correcting sum, we look for integers $(r,l,s)$ with $2r+l +s=k+2=4$, and $3l-1\geqslant 4$, so $l$ is at least $2$. This solves for:
		\begin{itemize}[label=$\ast$]
		\item $(r,l,s)=(1,2,0)$, the string pattern is as in Figure \ref{figure shape of strings going to infinity k=2} (a) and the contribution is
		\begin{align*}
	& 3\cdot\frac{1}{2}\cdot\bino{3d-4}{3d-4} \left( \frac{d}{3d-4}(3d-4)(d-1)-(d-1)^2\right)N_{d-1}(1) \\
	= & \frac{3}{2} (d-1)^2 N_d, \\
		\end{align*}
		since $N_{d-1}(1)=(d-1) N_d$.
		\item $(r,l,s)=(0,2,2)$, the string pattern is as in Figure \ref{figure shape of strings going to infinity k=2} (b) and the contribution is
		$$\frac{3}{2}\sum_{d_1+d_2=d}\bino{3d-5}{3d_1-1}d_2(d_1-d_2)N_{d_1}N_{d_2}.$$
		The symmetry is removed by labelling the components: the first contains $P_1$ (and is of degree $d_1$) and the second (of degree $d_2$) contains the intersection point with $L_2$.
		\item $(r,l,s)=(0,3,1)$, which contributes $0$ since $r=0$ forces $S=s$, and the contribution cancels.
		\item $(r,l,s)=(0,4,0)$, which is impossible since $r=0$ forces $s=S\geqslant 1$.
		\end{itemize}
	\end{proof}
	
	\begin{expl}
	Using the formula from Corollary \ref{cor-WDVVk=2} and the initial value $\ang{\psi^2 L}_1=0$, we recover $\ang{\psi^2 L}_2=\frac{9}{2}$ and $\ang{\psi^2 L}_3=54$, thus confirming our direct computation from Example \ref{ex-k=2}.
	\end{expl}
	
	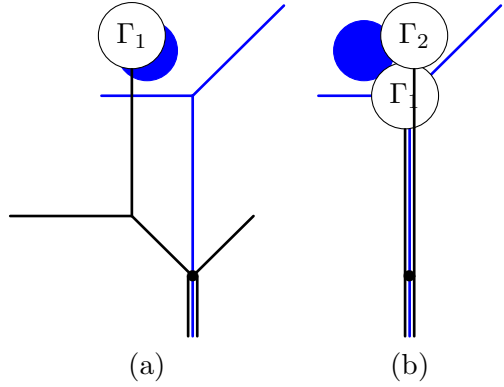
\begin{figure}
	\begin{center}
	\begin{tabular}{cc}
	\begin{tikzpicture}[line cap=round,line join=round,>=triangle 45,x=0.4cm,y=0.4cm]
\draw[line width=1pt,blue] (0,0)--++(-3,0)++(3,0)--++(3,3)++(-3,-3)--++(0,-6)--++(0,-2);
\draw[line width=1pt] (0.15,-6)--++(0,-2);
\draw[line width=1pt] (-0.15,-6)--++(0,-2);
\draw[blue,fill=blue] (-1.5,1.5) circle[radius=1];
\draw[line width=1pt] (0,-6)--++(-2,2)--++(-4,0);
\draw[line width=1pt] (0,-6)--++(2,2);
\curve (1) at (-2,2) {$\Gamma_1$};
\draw[line width=1pt] (-2,-4) to (1);
\draw (0,-6) node {$\bullet$};
\end{tikzpicture} & \begin{tikzpicture}[line cap=round,line join=round,>=triangle 45,x=0.4cm,y=0.4cm]
\draw[line width=1pt,blue] (0,0)--++(-3,0)++(3,0)--++(3,3)++(-3,-3)--++(0,-6)--++(0,-2);
\draw[line width=1pt] (0.15,-6)--++(0,-2);
\draw[line width=1pt] (-0.15,-6)--++(0,-2);
\draw[blue,fill=blue] (-1.5,1.5) circle[radius=1];
\curve (1) at (-0.15,0) {$\Gamma_1$};
\curve (2) at (0.15,2) {$\Gamma_2$};
\draw[line width=1pt] (-0.15,-6) to (1);
\draw[line width=1pt] (0.15,-6) to (2);
\draw (0,-6) node {$\bullet$};
\end{tikzpicture} \\
(a) & (b) \\
\end{tabular}		
	\caption{\label{figure shape of strings going to infinity k=2}Shape of the different strings going to infinity for $k=2$.}
	\end{center}
	\end{figure}
		
		\subsubsection{The WDVV recursion for $\ang{\psi^3 L}_d$, i.e.\ the $k=3$ case.}

	\begin{coro}\label{cor-WDVVk=3}
	Theorem \ref{theorem recursive formula r=1} simplifies to the following recursive formula for $k=3$:
	$$\boxed{ \begin{array}{>{\displaystyle}r>{\displaystyle}c>{\displaystyle}l}
	\ang{\psi^3 L}_d = &\sum_{\substack{d_1+d_2=d \\ d_1,d_2 >0}} & -\bino{3d-6}{3d_1-1} d_1^3 N_{d_1}(d_2\ang{\psi^3 L}_{d_2}+\ang{\psi^2 P}_{d_2}) \\
	& & + \bino{3d-6}{3d_1-2} d_1^2 N_{d_1}(d_2^2 \ang{\psi^3 L}_{d_2}+2d_2\ang{\psi^2 P}_{d_2}-\ang{\psi^2 L}_{d_2} ) \\
	& & + \bino{3d-6}{3d_1-3}d_1 N_{d_1}(d_2 \ang{\psi^2 L}_{d_2}+\ang{\psi P})\\
	& + & \frac{1}{2}(d-1) (N_{d-1}(2)+3N_{d-1})\\
	& + & \frac{1}{2}\sum_{d_1+d_2=d-1}\bino{3d-6}{3d_1-2}d_1 d_2 N_{d_1} N_{d_2} \left( d_1(d_2+1)-d_2^2 \right) \\
	& + & \frac{1}{2}\sum_{d_1+d_2=d} \left( \bino{3d-6}{3d_1-3} -\bino{3d-6}{3d_1-4}\right) d_2(d_1-d_2)\widetilde{N}_{d_1}(2)N_{d_2} .   \\
 	\end{array} }$$
	\end{coro}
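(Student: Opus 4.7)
The plan is to substitute $k=3$ directly into Theorem \ref{theorem recursive formula r=1} and perform the finite case analysis required by the correction sum, in complete analogy with the proofs of Corollaries \ref{cor-WDVVk=1} and \ref{cor-WDVVk=2}. The first sum over $d_1+d_2=d$ in the theorem specializes immediately to the first three lines of the corollary upon substituting $\ang{\psi^0 P}_d=N_d$, so all the real content lies in the correction sum.

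First I would enumerate the admissible tuples $(r,l,s)$ subject to $2r+l+s=5$, $r+s\leqslant l\leqslant d$, and $l\geqslant 2$ (the latter coming from $k+2\leqslant 3l-1$, noted after the theorem), together with the observation that the factor $r^{S-s}$ forces $S=s$ whenever $r=0$. Exactly four tuples survive: $(1,2,1)$, $(0,3,2)$, $(1,3,0)$, and $(0,4,1)$. For each tuple I would then enumerate the admissible degree-weight splittings $(d_i,w_i),(\widetilde{d_j},\widetilde{w_j})$ satisfying $\widehat{w_i}\leqslant\widehat{d_i}$, $\sum\widehat{w_i}=l-r$, and $\sum\widehat{d_i}=d-r$.

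The easy cases dispatch as follows. The tuple $(0,4,1)$, which forces $S=1$ and $(\widetilde{d_1},\widetilde{w_1})=(d,4)$, has vanishing bracketed coefficient $\tfrac{d}{3d-5}(3d-5)\tfrac{d}{4}-\tfrac{d^2}{4}=0$ and contributes nothing. The tuples $(1,2,1)$ with $S=1$ and $(1,3,0)$ with $S=1$ force respectively $(\widetilde{d_1},\widetilde{w_1})=(d-1,1)$ and $(d_1,w_1)=(d-1,2)$; after gathering prefactors, including the overall factor of $3$ for the three ends of $L$, and using $\widetilde{N}_{d-1}(1)=N_{d-1}$, these two contributions assemble into the first correction term $\tfrac{1}{2}(d-1)(N_{d-1}(2)+3N_{d-1})$ of the corollary.

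The remaining two cases are $(1,3,0)$ with $S=2$ (splittings $w_1=w_2=1$, $d_1+d_2=d-1$, whose contributions combine via the product $d_1d_2N_{d_1}N_{d_2}$ into the second correction term) and $(0,3,2)$ with $S=s=2$ (multiset $\{\widetilde{w_1},\widetilde{w_2}\}=\{1,2\}$, $\widetilde{d_1}+\widetilde{d_2}=d$, producing terms involving $\widetilde{N}_{d_1}(2)$ and $\widetilde{N}_{d_2}(1)=N_{d_2}$). The main obstacle is the final combinatorial simplification: converting the multinomial coefficients $\bino{3d-5}{3d_1-1,3d_2-1}$ and $\bino{3d-5}{3\widetilde{d_1}-3,3\widetilde{d_2}-2}$ coming from the theorem into the plain binomial $\bino{3d-6}{3d_1-2}$ and the signed combination $\bino{3d-6}{3d_1-3}-\bino{3d-6}{3d_1-4}$ appearing in the corollary requires repeated application of Pascal's rule together with careful rearrangement of the bracketed coefficient $\tfrac{d}{3d-5}\sum\tfrac{\widehat{k_i}\widehat{d_i}}{\widehat{w_i}}-\sum\tfrac{\widehat{d_i}^2}{\widehat{w_i}}$ across the two components of unequal weight. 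Summing all contributions yields the stated formula.
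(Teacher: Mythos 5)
Your proposal is correct and follows essentially the same route as the paper: specialize the first sum to $k=3$, enumerate the admissible tuples $(r,l,s)$ with $2r+l+s=5$ (you find the same four contributing ones, with $(0,4,1)$ vanishing and $(0,5,0)$ excluded), and assemble the $(1,2,1)$ and $(1,3,0)$, $S=1$ contributions into $\tfrac{1}{2}(d-1)(N_{d-1}(2)+3N_{d-1})$ exactly as the paper does. The only differences are cosmetic: the stray remark about substituting $\ang{\psi^0 P}_d=N_d$ is not needed for $k=3$, and for the two remaining cases the paper reads off the asymmetric forms $\bino{3d-6}{3d_1-2}(d_1(d_2+1)-d_2^2)$ and $\bigl(\bino{3d-6}{3d_1-3}-\bino{3d-6}{3d_1-4}\bigr)d_2(d_1-d_2)$ directly from the pre-simplified expressions $(II)^{\mathrm{right}}-(II)^{\mathrm{left}}$ with $P_1$ assigned to a labelled component, which is algebraically equivalent to (and a bit cleaner than) your proposed Pascal-rule conversion from the boxed formula.
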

	
	\begin{proof}
As before, for the first sum, we just replace $k$ by $3$. Then, we look for the correction term, with $2r+l+s=5$. We have $3l-1\geqslant 5$, so $l\geqslant 2$. Moreover, we have $s\leqslant l-r$ and $r\leqslant l-1$. This solves for
	\begin{itemize}[label=$\ast$]
	\item $(r,l,s)=(1,2,1)$, the pattern is as in Figure \ref{figure shape of strings going to infinity k=3} (a) and the contribution is
	$$\frac{3}{2}(d-1)N_{d-1}.$$
	\item $(r,l,s)=(1,3,0)$, we can have $S=2$ or $S=1$, giving the patterns presented in Figure \ref{figure shape of strings going to infinity k=3} (b) and (b'). The automorphisms contribute for $\frac{1}{3!}$. Assuming the first component is the one containing $P_1$, we can write the contributions to be respectively
	$$\frac{1}{2}\sum_{d_1+d_2=d-1} \bino{3d-6}{3d_1-2}d_1d_2N_{d_1}N_{d_2}(d_1(d_2+1)-d_2^2) ,$$
	and
	$$\frac{1}{2}(d-1)N_{d-1}(2) .$$
	\item $(r,l,s)=(0,3,2)$, the string pattern is as in Figure \ref{figure shape of strings going to infinity k=3} (c).  We can assume the first component to contain $P_1$, but the adjacent edge might be of weight $1$ or $2$. We thus get the contribution to be
	\begin{align*}
	 & \frac{1}{2} \sum_{d_1+d_2=d}\left( \bino{3d-6}{3d_1-3}d_2(d_1-d_2)\widetilde{N}_{d_1}(2)N_{d_2} + \bino{3d-6}{3d_1-2}d_2(d_1-d_2) N_{d_1}\widetilde{N}_{d_2}(2) \right) \\
	= & \frac{1}{2}\sum_{d_1+d_2=d} \left( \bino{3d-6}{3d_1-3} -\bino{3d-6}{3d_1-4}\right) d_2(d_1-d_2)\widetilde{N}_{d_1}(2)N_{d_2}, \\
	\end{align*}
	where the second row is obtained by switching $d_1$ and $d_2$ in the second part of the sum.
	\item $(r,l,s)=(0,4,1)$ contributes for $0$.
	\item $(r,l,s)=(0,5,0)$ is impossible since $r=0$ forces $s=S\geqslant 1$.
	\end{itemize}
	\end{proof}

	\begin{figure}
	\begin{center}
	\begin{tabular}{cccc}
	\begin{tikzpicture}[line cap=round,line join=round,>=triangle 45,x=0.4cm,y=0.4cm]
\draw[line width=1pt,blue] (0,0)--++(-3,0)++(3,0)--++(3,3)++(-3,-3)--++(0,-6)--++(0,-2);
\draw[line width=1pt] (0.15,-6)--++(0,-2);
\draw[line width=1pt] (-0.15,-6)--++(0,-2);
\draw[blue,fill=blue] (-1.5,1.5) circle[radius=1];
\draw[line width=1pt] (0,-6)--++(-4,0);
\draw[line width=1pt] (0,-6)--++(2,2);
\curve (1) at (0,2) {$\Gamma_1$};
\draw[line width=1pt] (0,-6) to (1);
\draw (0,-6) node {$\bullet$};
\end{tikzpicture} & \begin{tikzpicture}[line cap=round,line join=round,>=triangle 45,x=0.4cm,y=0.4cm]
\draw[line width=1pt,blue] (0,0)--++(-3,0)++(3,0)--++(3,3)++(-3,-3)--++(0,-6)--++(0,-2);
\draw[line width=1pt] (0.15,-6)--++(0,-2);
\draw[line width=1pt] (0,-6)--++(0,-2);
\draw[line width=1pt] (-0.15,-6)--++(0,-2);
\draw[blue,fill=blue] (-1.5,1.5) circle[radius=1];
\draw[line width=1pt] (0,-6)--++(2,2)++(-2,-2)--++(-1,2)--++(-1,1)--++(-2,0);
\curve (1) at (-2,0) {$\Gamma_1$};
\curve (2) at (-1,2) {$\Gamma_2$};
\draw[line width=1pt] (-2,-3) to (1);
\draw[line width=1pt] (-1,-4) to (2);
\draw (0,-6) node {$\bullet$};
\end{tikzpicture} & \begin{tikzpicture}[line cap=round,line join=round,>=triangle 45,x=0.4cm,y=0.4cm]
\draw[line width=1pt,blue] (0,0)--++(-3,0)++(3,0)--++(3,3)++(-3,-3)--++(0,-6)--++(0,-2);
\draw[line width=1pt] (0.15,-6)--++(0,-2);
\draw[line width=1pt] (0,-6)--++(0,-2);
\draw[line width=1pt] (-0.15,-6)--++(0,-2);
\draw[blue,fill=blue] (-1.5,1.5) circle[radius=1];
\draw[line width=1pt] (0,-6)--++(2,2)++(-2,-2)--++(-1,2)--++(-3,0);
\curve (1) at (-1,2) {$\Gamma_1$};
\draw[line width=1pt] (-1,-4) to node[midway,left] {$2$} (1);
\draw (0,-6) node {$\bullet$};
\end{tikzpicture} & \begin{tikzpicture}[line cap=round,line join=round,>=triangle 45,x=0.4cm,y=0.4cm]
\draw[line width=1pt,blue] (0,0)--++(-3,0)++(3,0)--++(3,3)++(-3,-3)--++(0,-6)--++(0,-2);
\draw[line width=1pt] (0.15,-6)--++(0,-2);
\draw[line width=1pt] (0,-6)--++(0,-2);
\draw[line width=1pt] (-0.15,-6)--++(0,-2);
\draw[blue,fill=blue] (-1.5,1.5) circle[radius=1];
\curve (1) at (-0.15,0) {$\Gamma_1$};
\curve (2) at (0.15,2) {$\Gamma_2$};
\draw[line width=1pt] (-0.15,-6) to node[midway,left] {$2$} (1);
\draw[line width=1pt] (0.15,-6) to (2);
\draw (0,-6) node {$\bullet$};
\end{tikzpicture} \\
(a) & (b) & (b') & (c) \\
\end{tabular}		
	\caption{\label{figure shape of strings going to infinity k=3}Shape of the different strings going to infinity for $k=3$.}
	\end{center}
	\end{figure}
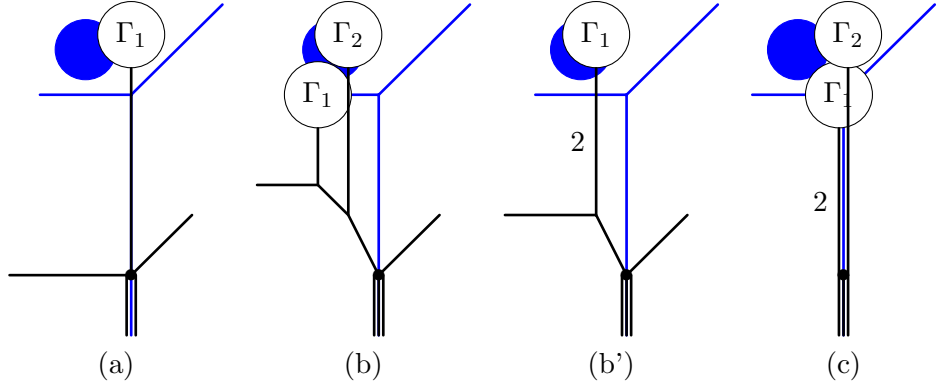

\begin{example}
Using the formula of Corollary \ref{cor-WDVVk=3}, we compute $\ang{\psi^3 L}_2=\frac{5}{2}$. This can also be confirmed by a direct computation as suggested in Subsection \ref{subsec-furthercases}.
\end{example}

\section{WDVV equation for $\ang{\psi L,\psi L}$}\label{sec-WDVV2}

In this section we prove a WDVV-type equation for the invariants $\ang{\psi L \psi L}_{d}$. We proceed analogously to Section \ref{sec-WDVV1}. To avoid heavy notation, we restrict to the case where each power of $\psi$ is just one. We consider the following evaluation map
$$f:\M_{0,n+4}(\RR^2,d)\cap\psi_1\cap \psi_2\longrightarrow \RR^2\times\RR^2\times\RR^2\times \RR^2\times(\RR^2)^n\times\M_{0,4}.$$
We aim to intersect its image with a cycle of the form $\Xi=L\times L'\times L_1\times L_2\times\{P_1\}\times\cdots\times\{P_n\}\times\{\lambda\}$, where $L,L',L_1$ and $L_2$ are generic tropical lines, $P_1,\dots,P_n$ are generic points inside $\RR^2$ and $\lambda\in\M_{0,4}$ is a very big cross-ratio inside some combinatorial type of $\M_{0,4}$. We require marking $1$ (also denoted $Q$), which is coupled with a $\psi$-condition, to meet the line $L$, and marking $2$ (also denoted $Q'$), which is also coupled with a $\psi$-condition, to meet the line $L'$. The lines $L_1$ and $L_2$ are merely helping constructions to produce the recursion from conditions with a very large cross-ratio.

We choose $n$ such that $n+2=3d-2$, so that the domain has dimension equal to the codimension of $\Xi$. The marked points remembered by the forgetful map are the markings associated to $L_1,L_2,L$ and $L'$. We care about the combinatorial types $L_1L_2//LL'$ and $L_1L//L_2L'$. We call the respective cycles $\Xi_A$ and $\Xi_B$.

\begin{theo}\label{thm-WDVV2}
	We have the following identity:

	$$\boxed{ \begin{array}{>{\displaystyle}r>{\displaystyle}c>{\displaystyle}l}
	\ang{\psi L,\psi L}_d = &\sum_{\substack{d_1+d_2=d \\ d_1,d_2 \geqslant 1}} & - \bino{3d-4}{3d_1-1} d_1^3 N_{d_1}(d_2 \ang{\psi L, \psi L}_{d_2}+2\ang{\psi L}_{d_2})\\
	& & -2\cdot  \bino{3d-4}{3d_1-2} d_1^2  N_{d_1}(d_2\ang{\psi L}_{d_2}+N_{d_2}) \\
			& & + \bino{3d-4}{3d_1-2} d_1 d_2 (d_1\cdot \ang{\psi L}_{d_1}+N_{d_1})\cdot (d_2\cdot \ang{\psi L}_{d_2}+N_{d_2}) \\
	& & +2\cdot  \bino{3d-4}{3d_1-1} d_1^2N_{d_1}\cdot (d_2\cdot \ang{\psi L}_{d_2}+N_{d_2})\\
	& & + 2 \cdot  \bino{3d-4}{3d_1-2} d_1 d_2 N_{d_1}\cdot (d_2\cdot \ang{\psi L}_{d_2}+N_{d_2})\\
	& & + \bino{3d-4}{3d_1-2} d_1 d_2 N_{d_1}N_{d_2}\\
	& -&  3(d-1)^3\cdot N_{d-1}.\\
	\end{array} }$$

	\end{theo}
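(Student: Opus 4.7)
The plan is to mirror the strategy of Theorem \ref{theorem recursive formula r=1}, now with two $\psi L$-conditions of power one. I would compute the intersection of $f(\M_{0,n+4}(\RR^2,d) \cap \psi_1 \cap \psi_2)$ with each of $\Xi_A$ (combinatorial type $L_1L_2//LL'$) and $\Xi_B$ (combinatorial type $L_1L//L_2L'$), both with very large cross-ratio $\lambda$. These two intersection numbers coincide by tropical balancing of the pushforward on $\M_{0,4}$, and equating them produces the identity to be rearranged into the claimed recursion.

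The first step is the two-line analogue of Lemma \ref{lemma shape of curves for big cross-ratio r=1}: as $\lambda \to \infty$, a contributing curve must either (i) acquire a contracted edge long enough to absorb the cross-ratio, splitting the parametrization into sub-curves $C_1, C_2$ of degrees $d_1+d_2=d$, or (ii) see one of the two $\psi L$-markings $Q, Q'$ escape to infinity along an unbounded end of its line constraint. In case (i), the choice $\Xi_A$ forces both $\psi L$-markings onto the same side of the contracted edge (say $C_2$) while $L_1, L_2$ sit on $C_1$; the choice $\Xi_B$ instead separates the $\psi L$-markings, pairing one with $L_1$ on one component and the other with $L_2$ on the other. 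For each of these, I would further distinguish whether the contracted edge is adjacent to $0$, $1$, or $2$ of the $\psi L$-vertices, apply the divisor equation (Lemma \ref{lem-divisoreq}) to rewrite occurrences of $\ang{L_i, \psi L}_{d_j}$ as $d_j \ang{\psi L}_{d_j} + N_{d_j}$, and then tally the intersection indices together with the binomial coefficients governing the distribution of the $n=3d-4$ point constraints among the two components. This should produce the six summands of the $d_1+d_2=d$ sum on the right-hand side.

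Case (ii) produces the correction term. By the symmetry $Q \leftrightarrow Q'$ and by summing over the three unbounded ends of the line along which the escaping marking can slide, it suffices to analyze a single escape configuration and multiply, eventually yielding the prefactor $3$. Because $k=1$ the escaping $\psi L$-vertex is trivalent in $\RR^2$, so the trichotomy (a)/(b)/(c) of Figure \ref{figure shape of strings for big cross-ratio} drastically restricts the possible shapes of the escaping string; carrying out the analogue of the $(II)^{\mathrm{left}}$ and $(II)^{\mathrm{right}}$ computations from the proof of Theorem \ref{theorem recursive formula r=1} for each of $\Xi_A$ and $\Xi_B$, most escape patterns contribute identically on the two sides and cancel, leaving exactly the net correction $-3(d-1)^3 N_{d-1}$ after all dust settles. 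The main technical obstacle is precisely this residual contribution: one must carry out the case distinction on escape patterns while carefully tracking which patterns are compatible with each cross-ratio type (how $L_1, L_2$ and the other $\psi$-marking sit relative to the escaping string), so that the algebraic identity balances out to exactly the announced formula.
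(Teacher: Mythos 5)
Your overall strategy is exactly the paper's: intersect the image of $f$ with $\Xi_A$ and $\Xi_B$ at large cross-ratio, split the contributions into contracted-edge curves (further stratified by whether the contracted edge is adjacent to $0$, $1$ or $2$ of the $\psi L$-vertices, with the divisor equation converting $\ang{L_i,\psi L}_{d_j}$ into $d_j\ang{\psi L}_{d_j}+N_{d_j}$) and a correction term from unbounded deformations, then equate. Your treatment of case (i) matches the paper's Step 1 and Step 2.

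Your description of case (ii), however, mischaracterizes the mechanism in two ways that would derail the computation if followed literally. First, the configuration that actually contributes is \emph{not} one in which a single marking $Q$ or $Q'$ escapes along its line: the paper shows that when only one $\psi L$-marking has all adjacent components unbounded, the unique bounded edge does not affect the cross-ratio, so those patterns contribute nothing. The contributing pattern is the one where a single bounded component is adjacent to \emph{both} $\psi L$-markings, and the two markings escape to infinity together along their respective lines, joined by the deforming path between them (Figure \ref{fig-deformk=2intotwo}); a step-by-step balancing argument pins down this pattern uniquely and yields $3(d-1)^3 N_{d-1}$. Second, there is no cancellation of escape patterns between the two sides: the correction term appears only on the $(L_1L_2//LL')$ side, while on the $(LL_1//L'L_2)$ side there is no correction at all, because in the escaping configuration $Q$ and $Q'$ stay close to each other and the cross-ratio separating $L$ from $L'$ remains bounded. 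You land on the right net term, but the route you sketch to it is not the one that works.
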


\begin{proof}
The proof is analogous to the proof of Theorem \ref{theorem recursive formula r=1}. 

	We evaluate the intersection number between $f(\M_{0,n+4}(\RR^2,d)\cap\psi_Q\cap \psi_{Q'})$ and $\Xi_A/\Xi_B$ for a big value of $\lambda$ on the following two combinatorial types: $(L_1L_2//LL')$ and $(LL_1//L'L_2)$. 
	
\textbf{Step 1, the $(L_1L_2//LL')$ left side:} 
	\begin{enumerate}[label=(\Roman*)]
	\item We assume $\Gamma$ has a contracted edge. This edge splits the curve in two components $C_1$ and $C_2$ of respective degrees $d_1+d_2=d$. The marked points are distributed among the two components. We have several possibilities according to whether the contracted edge is adjacent to one or two marked points with a $\psi L$-constraint or not, and whether $d_1=0$ or $d_1\neq 0$. Let $n_1$ (resp. $n_2$) be the number of marked point on $C_1$ (resp. $C_2$).
		\begin{enumerate}[label=(\alph*)]
		\item If $d_1=0$, $C_1$ is contracted to the intersection point of $L_1$ and $L_2$, so that $C_2$ is of degree $d$ and has to pass through $L_1\cap L_2$. The contribution is equal to
		$$(Ia)^\mathrm{left}=\ang{\psi L, \psi L}_d.$$
	
	 It is not possible that $d_2=0$, as the marked ends $Q$ and $Q'$, if together, need to be at a $5$-valent vertex, but the contracted bounded edge together with the two markings yields only valency three.
		\item Assume now that $d_1\neq 0$, and the contracted edge is not adjacent to either of the $\psi L$-points. As $C_1$ is subject to $n_1$ point constraints, and $C_2$ to $n_2$ point constraints as well as to both $\psi L$ constraint, one has
	$$\left\{\begin{array}{l}
	n_1\leqslant 3d_1-1 , \\
	n_2\leqslant 3d_2-3.\\
\end{array}	 \right.$$
	Adding the two relations, we see that $n=n_1+n_2\leqslant 3(d_1+d_2)-4=n$, so we have in fact equality. Thus, we can pick independently $C_1$ and $C_2$ subject to point constraints ($N_{d_1}\ang{\psi L, \psi L}_{d_2}$), pick an intersection point between them where to insert the contracted edge ($d_1d_2$), and the intersection point between $L_1$, $L_2$ and $C_1$ ($d_1^2$). This contributes a term
	$$(Ib)^\mathrm{left}=\sum_{d_1+d_2=d} \bino{3d-4}{3d_1-1} d_1^3d_2 N_{d_1}\ang{\psi L, \psi L}_{d_2}.$$
		\item We now assume that the contracted edge is adjacent to precisely one of the $\psi L$-points. The valency of the vertex has to be $4$, of which two arise from the contracted bounded edge and the marked end itself. The remaining two edges point into opposite direction by the balancing condition, that is, the image just looks like a straight line. This image has to pass to the intersection point of $C_1$ and $C_2$, as it is adjacent to the contracted bounded edge, and through $L$ (resp.\ $L'$) as it is adjacent to $Q$ (resp.\ $Q'$).
		 We now have
		$$\left\{\begin{array}{l}
	n_1\leqslant 3d_1-1 , \\
	n_2\leqslant 3d_2-2.\\
\end{array}	 \right.$$
		Adding the two rows, we do not get an equality anymore, as both sides differ by one. So we are in one of the following situations:
		\begin{itemize}[label=$\circ$]
		\item We have $n_1=3d_1-1$. In this situation, we pick $C_1$ fixed by the points constraints ($N_{d_1}$), an intersection point with $L_1$ and $L_2$ ($d_1^2$). The curve $C_2$ has to meet one of the intersection point between $C_1$ and $L$, of which there are $d_1$. The curve $C_2$, in addition to passing through the remaining points, also still contains the second $\psi L$ condition ($\ang{ \psi L}_{d_2}$). 

As the  $\psi L$-point adjacent to the contracted bounded edge can be either $Q$ or $Q'$, and in both cases we have the same contribution, we have to multiply with a factor of $2$.
		In total, we get a contribution of
		$$(Ic)^\mathrm{left}=2\cdot \sum_{d_1+d_2=d} \bino{3d-4}{3d_1-1} d_1^3N_{d_1}\ang{\psi L}_{d_2}.$$
		\item We have $n_1=3d_1-2$. In this situation, $C_2$ is fixed by the $P$-constraints and $\psi L$-constraint ($\ang{\psi  L}_{d_2}$). The curve $C_1$ has then to pass through an intersection of $C_2$ with $L$ (resp.\ $L'$) of which there are $d_2$, in addition to the $n_1$ points ($N_{d_1}$). We then pick the intersection points with $L_1$ and $L_2$ ($d_1^2$). 		
		As above, we have to multiply by $2$ and get a contribution of
		$$(Ic')^\mathrm{left}=2\cdot \sum_{d_1+d_2=d} \bino{3d-4}{3d_1-2} d_1^2 d_2 N_{d_1}\ang{\psi L}_{d_2}.$$
		\end{itemize}
		
		\item We now assume that the contracted bounded edge is adjacent to both of the $\psi L$-points. The valency of this edge has to be $5$ in total, counting both contracted marked ends and the contracted bounded edge. The remaining $2$ edges are again opposite to each other and thus, the image looks like a straight line passing through the intersection point of $L$ and $L'$ through which also $C_1$ has to pass.
		
		 We  have
		$$\left\{\begin{array}{l}
	n_1\leqslant 3d_1-2 , \\
	n_2\leqslant 3d_2-2.\\
\end{array}	 \right.$$
	Thus, we have equality again. 	The curve $C_1$ is fixed by the point conditions together with the intersection point of $L$ and $L'$ ($N_{d_1}$), we can choose $d_1$ intersection points of $C_1$ with $L_i$ for $i=1,2$, and $C_2$ has to pass through the remaining points and the intersection point of $L$ and $L'$ ($N_{d_2}$). The vertex of valency five to which both $\psi$-conditions are adjacent contributes a factor of $2$.
	Altogether, we have
		$$(Id)^\mathrm{left}=2\cdot \sum_{d_1+d_2=d} \bino{3d-4}{3d_1-2} d_1^2N_{d_1}N_{d_2}.$$
		
	\end{enumerate}
	
	\item We now compute the correction term on the left side, which corresponds to tropical curves that do not contain any contracted edge. Momentarily forgetting about the cross-ratio constraint, we get a $1$-parameter family of solutions. Each combinatorial type contributing a solution gives a $1$-parameter family. We look for combinatorial types for which this $1$-parameter family is unbounded, as we intersect with a very big cross-ratio. Since we precisely lack one marked point compared to the situation of Lemma \ref{lemma position points r=2} giving the relative position of the marked points on the curves, we are in one of the following situations:
	\begin{itemize}[label=$\ast$]
	\item There is an unbounded component containing two ends. We get a $1$-parameter family by deforming the path from one end to another. This situation is already described in \cite{GM053}: the string is only comprised of two ends, and the unique adjacent bounded edge does not contribute to the cross-ratio, so that these combinatorial types do not contribute any solution.
	
	\item There is one of the $\psi L$-marking all of whose adjacent components are unbounded. We get a $1$-parameter family by moving the $\psi L$-marking along the line and the paths to the ends in the adjacent components along with it. In that case, we are as if there was only a unique $\psi L$-constraint, and the shape of the deformations that are not bounded are described by Lemma \ref{lemma shape of curves for big cross-ratio r=1}. Considering the exponent of the $\psi$-power is $1$, assuming the marking lies on the bottom end of its tropical line, the unique possibility is that there are vertical ends and a vertical edge of weight $2$ attached to it. The deformation makes the vertex go down. Unfortunately, the unique bounded edge does not contribute to the cross-ratio, and these solutions do not contribute either.
	
	\item Last, there is a unique bounded component which is adjacent to the two $\psi L$-markings. We get a $1$-parameter family by moving each $\psi L$-marking along their lines, the path between them inside the bounded component and the paths between the markings and the ends in the other adjacent components.
	
	Up to symmetry, assume that the marking $Q'$ moves along the bottom end of $L'$ which is right to $L$. We prove step by step that the only possible deformation pattern is as depicted on the left of Figure \ref{fig-deformk=2intotwo}. 
	\begin{itemize}[label=-]
	\item The vertex $Q'$ cannot be a flat horizontal vertex. By balancing, there is therefore at least one edge going down, which hence needs to be an bottom end.
	\item Similarly, as the vertex $Q'$ cannot be flat and vertical, still by balancing, there needs to be an edge going right, which hence is an end of slope $(1,1)$.
	\item As the vertex is trivalent, the remaining edge has slope $(-1,0)$.
	\item This latter edge cannot be an end since the curve is connected. As it misses the point region, the only possibility is that it meets $L$ at the other $\psi L$-marking: $Q$.
	\item By balancing, there needs to be a bottom end adjacent to $Q$. The balancing the remaining edge has slope $(-1,1)$.
	\item As this new edge still misses the point region, the only possibility is to split into a vertical edge that goes inside the point ragion and an end of slope $(-1,1)$. We exactly get the picture from Figure \ref{fig-deformk=2intotwo}.
\end{itemize}
	\end{itemize}

	
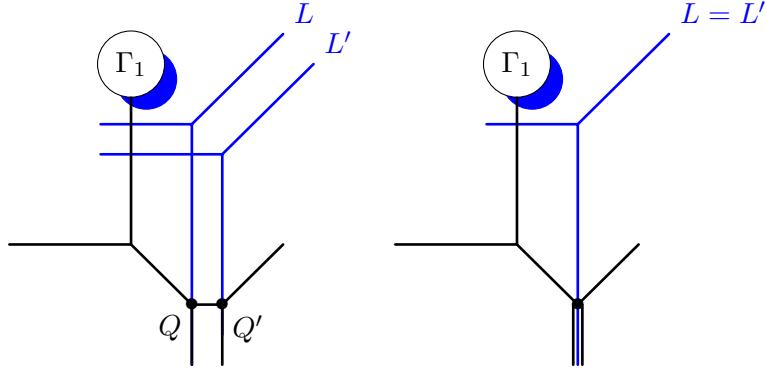
\begin{figure}
\begin{center}

\begin{tabular}{cc}
\begin{tikzpicture}[line cap=round,line join=round,>=triangle 45,x=0.4cm,y=0.4cm]
\draw[line width=1pt,blue] (0,0)--++(-3,0)++(3,0)--++(3,3) node[above right] {$L$} ++(-3,-3)--++(0,-6)--++(0,-2);
\draw[line width=1pt,blue] (1,-1)--++(-4,0)++(4,0)--++(3,3) node[above right] {$L'$} ++(-3,-3)--++(0,-6);
\draw[blue,fill=blue] (-1.5,1.5) circle[radius=1];
\draw[line width=1pt] (0,-6)--++(-2,2)--++(-4,0);
\draw[line width=1pt] (0,-8)--++(0,2)--++(1,0)--++(0,-2)++(0,2)--++(2,2);
\curve (1) at (-2,2) {$\Gamma_1$};
\draw[line width=1pt] (-2,-4) to (1);
\draw (0,-6) node {$\bullet$} node[below left] {$Q$};
\draw (1,-6) node {$\bullet$} node[below right] {$Q'$};
\end{tikzpicture} & \begin{tikzpicture}[line cap=round,line join=round,>=triangle 45,x=0.4cm,y=0.4cm]
\draw[line width=1pt,blue] (0,0)--++(-3,0)++(3,0)--++(3,3) node[above right] {$L=L'$} ++(-3,-3)--++(0,-6)--++(0,-2);
\draw[line width=1pt] (0.15,-6)--++(0,-2);
\draw[line width=1pt] (-0.15,-6)--++(0,-2);
\draw[blue,fill=blue] (-1.5,1.5) circle[radius=1];
\draw[line width=1pt] (0,-6)--++(-2,2)--++(-4,0);
\draw[line width=1pt] (0,-6)--++(2,2);
\curve (1) at (-2,2) {$\Gamma_1$};
\draw[line width=1pt] (-2,-4) to (1);
\draw (0,-6) node {$\bullet$};
\end{tikzpicture} \\
\end{tabular}
\end{center}

\caption{The deformation of a $k=2$-string as in Figure \ref{figure shape of strings going to infinity k=2} to a string with two $\psi L$ constraints at infinity.}\label{fig-deformk=2intotwo}
\end{figure}

	The remaining curve is of degree $d-1$, passes through $n=3d-4=3(d-1)-1$ points and has a fixed down end of weight one, contributing a factor of $(d-1)\cdot N_{d-1}$, where $(d-1)$ accounts for the choice of the fixed down end. Furthermore, we need to pick the intersections with $L_1$ and $L_2$, giving another $(d-1)^2$ choices. Altogether, we obtain a contribution of 
	$$(Ie)^\mathrm{left}=3(d-1)^3\cdot N_{d-1}.$$


	\end{enumerate}

\textbf{Step 2, the $(LL_1//L'L_2)$ right side:} 

	\begin{enumerate}[label=(\Roman*)]
	\item As in step 1, we assume $\Gamma$ has a contracted edge which splits the curve in two components $C_1$ and $C_2$ of respective degrees $d_1+d_2=d$. Let $n_1$ (resp. $n_2$) be the number of marked point on $C_1$ (resp. $C_2$).
		\begin{enumerate}[label=(\alph*)]
		\item We cannot have $d_i=0$, as the marked ends $Q$ resp.\ $Q'$ need to be at a $4$-valent vertex, but the contracted bounded edge together with the two markings (one for $Q$ resp.\ $Q'$, one for $L_i$) yields only valency three.
		\item Assume now that $d_1\neq 0$, and the contracted edge is not adjacent to either of the $\psi L$-points. As $C_1$ is subject to $n_1$ point constraints and one $\psi L$ constraint, and $C_2$ to $n_2$ point constraints and one $\psi L$ constraint, we have
	$$\left\{\begin{array}{l}
	n_1\leqslant 3d_1-2 , \\
	n_2\leqslant 3d_2-2.\\
\end{array}	 \right.$$
	Adding the two relations, we see that $n=n_1+n_2\leqslant 3(d_1+d_2)-4=n$, so we have in fact equality. Thus, we can pick independently $C_1$ and $C_2$ subject to point constraints.
As both curves also meet a line condition without $\psi$, $L_i$, we can use the divisor equation Lemma \ref{lem-divisoreq} to express these factors as $d_i\cdot \ang{\psi L}_{d_i}+N_{d_i}$ for $i=1,2$.
Furthermore, we pick an intersection point between $C_1$ and $C_2$ where to insert the contracted edge ($d_1d_2$). This contributes a term
	$$(Ib)^\mathrm{right}=\sum_{d_1+d_2=d} \bino{3d-4}{3d_1-2} d_1 d_2 (d_1\cdot \ang{\psi L}_{d_1}+N_{d_1})\cdot (d_2\cdot \ang{\psi L}_{d_2}+N_{d_2}) .$$
		\item We now assume that the contracted edge is adjacent to the $\psi L$-point in $C_1$, but not to the $\psi L$-point in $C_2$.
The situation where the contracted edge is adjacent to the $\psi L$-point in $C_2$ but not the one in $C_1$ is completely analogous and symmetric (by exchanging $d_1$ and $d_2$, but we sum over all pairs satisfying $d_1+d_2=d$), which accounts for a factor of $2$ in our contributions below.

		The valency of the vertex has to be $4$, of which two arise from the contracted bounded edge and the marked end itself. The remaining two edges point into opposite direction by the balancing condition, that is, the image just looks like a straight line. This image has to pass to the intersection point of $C_1$ and $C_2$, as it is adjacent to the contracted bounded edge, and through $L$ as it is adjacent to $Q$.
		 We now have
		$$\left\{\begin{array}{l}
	n_1\leqslant 3d_1-1 , \\
	n_2\leqslant 3d_2-2.\\
\end{array}	 \right.$$
		Adding the two rows, we do not get an equality anymore, as both sides differ by one. So we are in one of the following situations:
		\begin{itemize}[label=$\circ$]
		\item We have $n_1=3d_1-1$. In this situation, we pick $C_1$ fixed by the points constraints ($N_{d_1}$), and an intersection point with $L_1$ ($d_1$). The curve $C_2$ has to meet one of the intersection points between $C_1$ and $L$, of which there are $d_1$. The curve $C_2$, in addition to passing through the remaining points, also still contains the second $\psi L$ condition, and passes through $L_2$, so using the divisor equation \ref{lem-divisoreq} again we obtain a factor of $d_2\cdot \ang{\psi L}_{d_2}+N_{d_2}$. 
We get a contribution of
		$$(Ic)^\mathrm{right}=2\cdot \sum_{d_1+d_2=d} \bino{3d-4}{3d_1-1} d_1^2N_{d_1}\cdot (d_2\cdot \ang{\psi L}_{d_2}+N_{d_2}).$$
		\item We have $n_1=3d_1-2$. In this situation, $C_2$ is fixed by the $P$-constraints and $\psi L$-constraint, and passes through $L_2$ in addition ($d_2\cdot \ang{\psi L}_{d_2}+N_{d_2}$). The curve $C_1$ has then to pass through an intersection of $C_2$ with $L$ of which there are $d_2$, in addition to the $n_1$ points ($N_{d_1}$). We finally pick the intersection point of $C_1$ with $L_1$  ($d_1$). 		
		We obtain
		$$(Ic')^\mathrm{right}=2 \cdot \sum_{d_1+d_2=d} \bino{3d-4}{3d_1-2} d_1 d_2 N_{d_1}\cdot (d_2\cdot \ang{\psi L}_{d_2}+N_{d_2}).$$
		\end{itemize}

	\item We finally assume that the contracted edge is adjacent to the $\psi L$-point in $C_1$ and to the $\psi L$-point in $C_2$. The images of both vertices adjacent to the contracted bounded edge look like straight lines, i.e.\ the intersection of $C_1$ and $C_2$ has to pass through $L\cap L'$.
		 We have
		$$\left\{\begin{array}{l}
	n_1\leqslant 3d_1-2 , \\
	n_2\leqslant 3d_2-2.\\
\end{array}	 \right.$$
		
		Thus, we have equality again. 	The curve $C_1$ is fixed by the point conditions together with the intersection point of $L$ and $L'$ ($N_{d_1}$), we can choose $d_i$ intersection poihts of $C_i$ with $L_i$ for $i=1,2$, and $C_2$ has to pass through the remaining points and the intersection point of $L$ and $L'$ ($N_{d_2}$). 
	Altogether, we have
		$$(Id)^\mathrm{right}= \sum_{d_1+d_2=d} \bino{3d-4}{3d_1-2} d_1 d_2 N_{d_1}N_{d_2}.$$

	\end{enumerate}

On this side, we cannot have correction terms coming from strings going to infinity: the deformation has already been described on the left side, and as the $\psi L$-markings are close to each other, the cross-ratio is always bounded.


\end{enumerate}

As the last step, we set the two sides equal and solve for our wanted enumerative invariant, obtaining the formula in the statement.
\end{proof}

\begin{example}
Using the initial value $\ang{\psi L,\psi L}_1=2$ (obtained from a line with vertex at the intersection of $L$ and $L'$, and both marked points $Q$ and $Q'$ adjacent to this vertex, yielding a $5$-valent vertex contributing a multiplicity factor of $2$), we can compute 
$\ang{\psi L,\psi L}_2=17$ and $\ang{\psi L,\psi L}_3=302$, confirming the computation in Examples \ref{ex-twolines1} and \ref{ex-twolines2} obtained from Theorem \ref{thm-2lines}.
\end{example}

\bibliographystyle{plain}
\bibliography{biblio}

%
%
%

\end{document}